\documentclass[12pt]{amsart}
\usepackage{amsmath,amssymb,latexsym,esint,cite,mathrsfs}
\usepackage{verbatim,wasysym}
\usepackage[left=2.6cm,right=2.6cm,top=2.8cm,bottom=2.8cm,
marginparwidth=20mm, marginparsep=3mm]{geometry}
\usepackage{tikz,enumitem,graphicx, subfig, microtype, color}
\usepackage{epic,eepic}

\allowdisplaybreaks
\usepackage[colorlinks=true,urlcolor=blue, citecolor=red,linkcolor=blue,
linktocpage,pdfpagelabels, bookmarksnumbered,bookmarksopen]{hyperref}
\usepackage[english]{babel}

\numberwithin{equation}{section}

\newtheorem{thm}{Theorem}[section]

\newtheorem{cor}[thm]{Corollary}
\newtheorem{pro}[thm]{Proposition}

\newtheorem{exa}[thm]{Example}

\theoremstyle{definition}
\newtheorem{rem}[thm]{Remark}

\def\FORALL{\text{ for all }}
\def\FOR{\text{ for }}
\def\IN{\text{ in }}
\def\AND{\text{ and }}
\def\ON{\text{ on }}
\def\IF{\text{ if }}
\def\ol{\overline}
\def\tim{\times}
\def\bald{\begin{aligned}}
\def\eald{\end{aligned}}
\def\ot{\otimes}
\def\bmat{\begin{pmatrix}}\def\emat{\end{pmatrix}}
\def\hr{\hat\rho}

\newcommand{\dist}{{\rm dist}}

\newcommand{\N}{\mathbb{N}}

\newcommand{\R}{\mathbb{R}}

\def\ga {\gamma}

\def\cS{\mathcal{S}}

\def\bproof{\begin{proof}}
\def\eproof{\end{proof}}
\def\gd{\delta} \def\gth{\theta}\def\x{\hat x}
\def\lan{\langle} \def\ran{\rangle}
\def\stm{\setminus}
\def\ep{\varepsilon}
\def\ON{\text{ on }}
\def\beq{\begin{equation}}
\def\eeq{\end{equation}}
\def\x{\hat x} \def\y{\hat y} 
\def\ga{\alpha} \def\gl{\lambda} \def\gL{\Lambda}
\def\fr{\frac}\def\beq{\begin{equation}}
\def\eeq{\end{equation}} 
\def\du#1{\left\langle #1\right\rangle}

\DeclareMathOperator\USC{USC}
\DeclareMathOperator\LSC{LSC}
\DeclareMathOperator\Lip{Lip}

\title[]{Balance between degenerate elliptic operators and coercive Hamiltonians}

\author[I. Birindelli]{Isabeau Birindelli}
\address[I. Birindelli]{Dipartimento di Matematica Guido Castelnuovo, Sapienza 
Universit\`a di Roma, Piazzale Aldo Moro 5, Roma, Italy.}
\email{isabeau@mat.uniroma1.it}

\author[G. Galise]{Giulio Galise}
\address[G. Galise]{Dipartimento di Matematica Guido Castelnuovo, Sapienza 
Universit\`a di Roma, Piazzale Aldo Moro 5, Roma, Italy.}
\email{galise@mat.uniroma1.it}

\author[H. Ishii]{Hitoshi Ishii}
\address[H. Ishii]{Institute for Mathematics and Computer Science, Tsuda University, 2-1-1 Tsuda,
Kodaira, Tokyo 187-8577, Japan.}
\email{hitoshi.ishii@waseda.jp}

\def\aka#1{\textcolor{red}{#1}}

\begin{document}

\begin{abstract}
For $p>1$, we consider the boundary value problem
for fully nonlinear degenerate elliptic equations $-\lambda_i(D^2u)+|D u|^p+\gamma u=f(x)$ in bounded domains with Dirichlet or boundary blow-up conditions; here $\lambda_i(D^2u)$ denotes the $i$-th eigenvalue of the Hessian. We study existence and nonexistence of solutions together with the asymptotic behaviour of the solutions when $\gamma$ goes to zero.  A priori Lipschitz estimates play an important role. The interplay between the operator’s degeneracy and the superlinear growth of the Hamiltonian gives rise to phenomena that are very different depending on which of the two terms dominates, e.g. the ergodic dichotomy takes place only when $i=N$, while new phenomena arise for $i<N$ in which case, under  mild conditions, solutions that blow up even in just one point do not exist,  and conditions on the size of $f$  must be imposed for the existence of solutions to the  Dirichlet problem with homogeneous boundary condition. 
\end{abstract}

\maketitle 

\tableofcontents
\allowdisplaybreaks

\section{Introduction}

In this paper, we investigate the existence of viscosity sub- and supersolutions for degenerate elliptic equations featuring a superlinear Hamiltonian of the form:
\begin{equation}\label{into1}
- F(D^2u)+|D u|^p=f(x) \quad \text{in } \Omega,
\end{equation}
where $\Omega$ is a domain in $\mathbb{R}^N$ and $p>1$. Specifically, we focus on the fully nonlinear degenerate operators defined by
$$F(X):=\lambda_i(X),$$
where $\lambda_1(X) \leq \dots \leq \lambda_N(X)$ denote the eigenvalues of the symmetric matrix $X \in \mathcal{S}(N)$ arranged in nondecreasing order. While our primary focus is on these single-eigenvalue operators, several of our results extend to linear combinations of eigenvalues, such as e.g. the truncated Laplacians:
\begin{equation}\label{pk}
F(X):=\mathcal{P}_k^-(X)=\sum_{i=1}^k\lambda_i(X), \quad\text{or}\quad F(X):=\mathcal{P}_k^+(X)=\sum_{i=N-k+1}^N\lambda_i(X).
\end{equation}
We will explicitly remark on these extensions throughout the text whenever they apply. Additionally, although some of our theorems hold for a wider range of exponents, our analysis is mainly on the case $p\in (1,2]$.

\medskip
For uniformly elliptic operators as the standard Laplacian $F(D^2u)=\Delta u$, the question of solvability has been thoroughly studied, and the existence of solutions is well known to be intimately connected to the \lq\lq size\rq\rq\ of the source term $f$ (see, e.g., \cite{HMV}). This problem is also related to the associated ergodic framework, as established in the seminal works \cite{LL} and \cite{Por}, which proved the following classical dichotomy for $p\in (1,2]$:

\noindent on the one hand, the existence of a solution $u_0$ to the Dirichlet problem associated to \eqref{into1} with homogeneous boundary conditions $u=0$ on $\partial\Omega$ implies that, as $\gamma\rightarrow 0^+$, the unique solutions $u_\gamma$ of the  problems
\begin{equation}\label{int2}
\left\{\begin{array}{cl}
-F(D^2u)+|D u|^p+\gamma u=f(x) & \text{in $\Omega$}\\
u=0 & \text{on $\partial\Omega$}
\end{array}\right.
\end{equation}
converge uniformly to $u_0$; on the other hand, if such a solution of \eqref{into1} doesn't exist, then the family $\{u_\gamma\}$ becomes unbounded; but, along a suitable sequence $\gamma_n \to 0^+$, the functions $v_{\gamma_n}=u_{\gamma_n}+\left\|u_{\gamma_n}^-\right\|_\infty$ converge locally uniformly in $\Omega$ to a function $v$ satisfying the ergodic problem:
\begin{equation}\label{interg}
\left\{\begin{array}{cl}
-F(D^2v)+{|Dv|}^p=f(x)+C & \text{in $\Omega$}\\
v=+\infty & \text{on $\partial \Omega$},
\end{array}\right.
\end{equation}
for a unique constant $C$ known as the ergodic constant. Generalizations of such a result to the nonlinear setting have been obtained for the $p$-Laplacian in \cite{LP} and for some singular/degenerate fully nonlinear operators in \cite{BDL1}. 

\medskip
In the degenerate settings investigated here, the interplay between the operator's degeneracy and the superlinear growth of the Hamiltonian gives rise to different phenomena. Depending on whether the superlinear gradient term dominates or not the second-order degenerate term,  new behaviours emerge, or alternatively, a counterpart to the classical elliptic dichotomy is recovered. 

In particular for the operator $F(D^2u)=\lambda_i(D^2u)$, the case $i<N$ is different from the standard theory, whereas the top eigenvalue $i=N$ retains many analogies with the uniformly elliptic framework. 

To highlight this contrast, we begin by stating an unusual nonexistence result for large solutions (i.e., blowing up at the boundary):

\begin{thm}\label{noblowupint}
Let $p>1$, $i<N$, and let $C$ be any constant. Suppose $\Omega$ is an open subset of $\mathbb{R}^N$ satisfying the interior sphere condition at some point $x_0\in\partial\Omega$. Then, there exist no viscosity subsolutions $u\in\USC(\Omega)$ of the equation
\begin{equation}\label{blowuppb}
-\lambda_i(D^2u)+|D u|^p=C \quad\,\text{in $\Omega$},
\end{equation}
satisfying the boundary blow-up condition
\begin{equation}\label{binfty}
\lim_{x\to x_0}u(x)=+\infty.
\end{equation}
\end{thm}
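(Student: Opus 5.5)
The idea is to exploit the degeneracy $i<N$ through an explicit radial barrier which is a strict supersolution and is concave in every tangential direction. For $\phi(x)=g(|x-z|)$ with $s=|x-z|$, the Hessian has eigenvalue $g''(s)$ in the radial direction and $g'(s)/s$ with multiplicity $N-1$ in the tangential directions. Assume $g'<0$ and $g''\ge g'/s$. Then the $N-1$ smallest eigenvalues all equal $g'/s<0$, so $\lambda_i(D^2\phi)=g'/s$ for every $i\le N-1$. Consequently
$$-\lambda_i(D^2\phi)+|D\phi|^p=\frac{|g'(s)|}{s}+|g'(s)|^p ,$$
and this exceeds $C$ as soon as $|g'|$ is large. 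No condition on $g''$ is needed beyond $g''\ge g'/s$, which holds automatically if $g$ is convex. So $g$ may be chosen decreasing, convex, with $|g'|$ as large as we like and with arbitrary blow-up at an inner sphere.

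The plan is a comparison argument in a region near $x_0$. Let $B(y,R)\subset\Omega$ be the interior ball at $x_0$ and $\nu=(x_0-y)/R$. Take the centre $z=y$ itself. On the shell $A_\rho=\{\rho<|x-y|<R\}$ use $\phi_{\rho}(x)=g_\rho(|x-y|)$ with:
\begin{itemize}
\item $g_\rho$ decreasing and convex;
\item $g_\rho(r)\to+\infty$ as $r\downarrow\rho$;
\item $|g_\rho'|\ge M$, with $M$ so large that $M/R+M^p>C$.
\end{itemize}
Then $\phi_\rho$ is a smooth strict supersolution in $A_\rho$ with $+\infty$ boundary values on the inner sphere. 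I will localize near $x_0$ by intersecting with a small cone $K_\delta$ of directions around $\nu$. On the lateral part of $\partial(A_\rho\cap K_\delta)$ I add a second steep term to $\phi_\rho$ blowing up there. This term is of the same type, radial about a point placed off the axis, and the sum of two functions whose tangential eigenvalues are negative still has $\lambda_i<0$ when $i\le N-1$. Indeed, adding a matrix with $N-1$ negative eigenvalues can only decrease $\lambda_i$ by Weyl's inequality, and the gradient term must then be checked to remain large, which I would arrange by aligning the two gradients.

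The contradiction should come from the maximum of $u-\Phi$, where $\Phi$ is the localized barrier, on the truncated region $D_t=A_\rho\cap K_\delta\cap B(y,R-t)$. The closure of $D_t$ is compact in $\Omega$, so $u$ is bounded above there and $u-\Phi$ attains its maximum. This maximum cannot be at an interior point, since there $\Phi$ is a smooth strict supersolution touching $u$ from above. It cannot lie on the inner or lateral boundary either, where $\Phi=+\infty$. Hence the maximum lies on the outer cap $\partial B(y,R-t)\cap K_\delta$. The hypothesis~\eqref{binfty} is then used as $t\to0$: along the cap through the point $x_0-t\nu$ the values of $u$ tend to $+\infty$, whereas $\Phi$ stays bounded there.

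At this point I would replace the hard truncation at radius $R-t$ by an additional barrier that blows up on $\partial B(y,R-t)$ except near the axis. This forces the maximum into the interior and yields the contradiction. The essential input is that the interior sphere condition lets all of these barriers live inside $\Omega$ while $x_0$ is approached from within the cone.

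The main obstacle is precisely this last step. The barrier must be infinite on the whole boundary of the comparison region except near $x_0$, and it must stay a strict supersolution. A priori $u$ could blow up at $x_0$ faster than any prescribed rate, so one cannot simply compare $u$ with a barrier that blows up at $x_0$. I would first try to use the freedom in $g$ above: since only $|g'|$ large is needed, the barrier may be taken to grow as fast as desired near the inner sphere, while remaining bounded near $x_0$.
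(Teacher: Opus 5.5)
There is a genuine gap at the step you flag, and it is structural rather than technical: your barrier points the wrong way. The profile $g_\rho$ is decreasing, infinite on the inner sphere and the lateral boundary, and finite near $x_0$. Comparison with a strict supersolution then only says that $\sup(u-\Phi)$ is approached where $\Phi$ is finite, i.e.\ next to $x_0$. That is entirely consistent with $u\to+\infty$ there.

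A further barrier blowing up on $\partial B(y,R-t)$ ``except near the axis'' just moves the maximum onto the uncovered piece of that cap, next to $x_0-t\nu$, where $u$ is large. It does not push the maximum inside. No barrier could: if a $C^2$ strict supersolution $\Phi$ were $+\infty$ on the whole boundary of a region compactly contained in $\Omega$, it would attain an interior minimum. There $D\Phi=0$ and $D^2\Phi\ge 0$ give $-\lambda_i(D^2\Phi)+|D\Phi|^p\le 0$, which is impossible once $C\ge 0$ (and one may reduce to that case). So the contradiction can only come from an inequality $u\le\Phi$ with $\Phi$ bounded along a path to $x_0$.

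Your Weyl step is also false as stated. For $N=2$, $\mathrm{diag}(M,-1)+\mathrm{diag}(-1,M)$ has $\lambda_1=M-1$, so adding a matrix with $N-1$ negative eigenvalues can raise $\lambda_i$. Hence sums of steep radial barriers with different centres need not be supersolutions.

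The missing idea is to make the barrier \emph{increasing} in $r=|x-y|$ and let it explode on a sphere strictly inside $\Omega$, on the side facing $x_0$. Choose $K$ so that $t^p-2t/R>C$ for all $t>K$ (possible since $p>1$). For $R/2\le r<R-\varepsilon$ set
\[
\phi_\varepsilon(x)=\Phi_\varepsilon(r)=\max_{\partial B_{R/2}(y)}u+K\Big(r-\frac R2\Big)+\frac{\varepsilon}{R-\varepsilon-r}.
\]
The eigenvalues of $D^2\phi_\varepsilon$ are $\Phi_\varepsilon'/r$ (multiplicity $N-1$) and $\Phi_\varepsilon''$. So for $i\le N-1$ one has $\lambda_i(D^2\phi_\varepsilon)\le \Phi_\varepsilon'/r$ however large $\Phi_\varepsilon''$ is; this is where $i<N$ genuinely enters. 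Hence $-\lambda_i(D^2\phi_\varepsilon)+|D\phi_\varepsilon|^p\ge(\Phi_\varepsilon')^p-2\Phi_\varepsilon'/R>C$, because $\Phi_\varepsilon'>K$.

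The function $u$ is bounded above on the compact annulus $\overline{B_{R-\varepsilon}(y)}\setminus B_{R/2}(y)\subset\Omega$. Thus $u-\phi_\varepsilon$ is $\le 0$ on the inner sphere and tends to $-\infty$ at the outer one, so it has no positive maximum, and $u\le\phi_\varepsilon$. Letting $\varepsilon\to0$ gives $u\le\max_{\partial B_{R/2}(y)}u+KR/2$ on the segment from $y+\frac12(x_0-y)$ to $x_0$, contradicting \eqref{binfty}.

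This also removes your worry about arbitrarily fast blow-up of $u$: the barrier explodes only where $u$ is finite, and its explosion vanishes as $\varepsilon\to0$. The paper runs this argument after replacing $u$ by its maximum over spheres centred at $y$, which makes it radial. That symmetrization is a convenience, not a necessity.
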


Theorem \ref{noblowupint} implies that for any 
domain $\Omega$, there are no solutions that blow-up on the entire boundary; structurally ruling out the existence of an ergodic constant when $i<N$. This marks a difference from the case $i=N$ where, much like the uniformly elliptic case, a unique constant $C$ exists for which boundary blow-up solutions can be constructed. Moreover, when $\Omega$ is a ball and $f$ is constant, this constant can be determined explicitly (see Section \ref{lambdaN}).

Consequently it is important to understand, always for $i<N$, the relationship between the Dirichlet problems:
\begin{equation}\tag{DP$_0$}\label{DPI0}
\left\{\begin{array}{cl}
-\lambda_i(D^2u)+|D u|^p=f(x) & \text{in $\Omega$}\\
u=0 & \text{on $\partial \Omega$}
\end{array}\right.
\end{equation}
and 
\begin{equation}\tag{DP$_\gamma$}\label{DPI}
\left\{\begin{array}{cl}
-\lambda_i(D^2u)+|D u|^p+\gamma u=f(x) & \text{in $\Omega$}\\
u=0 & \text{on $\partial\Omega$}
\end{array}\right.
\end{equation}
with $\gamma>0$. From the viewpoint of solvability, we show that problems \eqref{DPI0} and \eqref{DPI} are equivalent in the model case where $\Omega = B_R$ and $f \equiv -C$ for a positive constant $C$: 

\begin{pro}\label{DPvsDP0}
Let $\Omega=B_R$, $C>0$, $p>1$, and set $f(x)=-C$. Then, the boundary value problem \eqref{DPI0} admits a solution if, and only if, \eqref{DPI} admits a solution. Moreover, the solutions to both \eqref{DPI0} and \eqref{DPI} are unique, and they exist if, and only if, the radius satisfies $R\leq\bar R:=\frac{(p-1)^\frac{p-1}{p}}{pC^\frac{p-1}{p}}$.
\end{pro}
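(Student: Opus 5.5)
The plan is to reduce everything to radial functions. For $u(x)=\varphi(|x|)$ with $r=|x|$, the Hessian has eigenvalue $\varphi''(r)$ in the radial direction and eigenvalue $\varphi'(r)/r$ with multiplicity $N-1$ on the tangent space. Since $i\le N-1$, we get $\lambda_i(D^2u)=\varphi'/r$ whenever $\varphi''\ge\varphi'/r$. Both \eqref{DPI0} and \eqref{DPI} then reduce to a first-order algebraic relation for $t=\varphi'$. Throughout I take $i<N$, as in the surrounding discussion.

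\textbf{Step 1: existence for $R\le\bar R$.} For \eqref{DPI0} I look for $\varphi$ solving $t/r=t^p+C$, that is, $r=g(t):=t/(t^p+C)$ for $t\ge 0$. The function $g$ increases up to $t_*=(C/(p-1))^{1/p}$ and has maximum $g(t_*)=\frac{(p-1)^{(p-1)/p}}{pC^{(p-1)/p}}=\bar R$. So for $r\le\bar R$ I take the smallest root $t(r)$, which is increasing, satisfies $t(r)\sim r/C$ near $0$, and is continuous up to $r=\bar R$. I then set
\[
\varphi(r)=-\int_r^R t(s)\,ds .
\]
To check the ordering of eigenvalues I note that $g(t)/t=1/(t^p+C)$ is decreasing, hence $g'(t)\le g(t)/t$, and therefore $\varphi''=t'(r)=1/g'(t)\ge t/r=\varphi'/r$. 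Consequently $\lambda_i(D^2u)=\varphi'/r$ and $u$ solves \eqref{DPI0}. Smoothness at the origin follows from $t(r)\sim r/C$. At $r=\bar R$ we have $\varphi''=+\infty$, which only needs a viscosity check on the boundary sphere, so it causes no trouble.

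For \eqref{DPI} the same ansatz gives $\varphi'/r=|\varphi'|^p+C-\gamma\varphi$. Since $\varphi\le0$, the forcing $C-\gamma\varphi\ge C$ is larger. I would either solve this ODE by a shooting or fixed-point argument on $[0,R]$, or, more cheaply, produce the solution by Perron's method. For Perron I would use the \eqref{DPI0} solution as a supersolution (it satisfies $\gamma u\le0$) and a large quadratic $-K(R^2-|x|^2)$ as a subsolution, after verifying the sign conditions.

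\textbf{Step 2: nonexistence for $R>\bar R$, for both problems.} Let $u$ be a (sub)solution vanishing on $\partial B_R$. Because of the maximum principle structure, $u\le0$ in both cases, so in \eqref{DPI} the term $-\gamma u$ is nonnegative. In both problems we therefore have, in the viscosity sense,
\[
\lambda_{N-1}(D^2u)\ \ge\ \lambda_i(D^2u)\ \ge\ |Du|^p+C .
\]
I would then pass to the spherical maximum $M(r)=\max_{|x|=r}u$ and show that it is a viscosity subsolution of the one-dimensional inequality $\min\{M'',\,M'/r\}\ge |M'|^p+C$. To do this, touch $M$ from above at $r_0$ by a smooth $\psi$ and touch $u$ from above at a maximizing point $x_0$ by $\psi(|x|)$. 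That test function has eigenvalues $\psi''$ and $\psi'/r$ (the latter $N-1$ times), so $\lambda_i\le\min\{\psi'',\psi'/r\}$ once $i\le N-1$. Because $M$ is then convex and nondecreasing with $M(R)=0$, it is Lipschitz and differentiable a.e. The inequality $M'/r\ge (M')^p+C$ forces $r\le g(M')\le\bar R$, which is impossible for $r\in(\bar R,R)$. So no solution exists there. The same argument shows that a solution of \eqref{DPI} forces $R\le\bar R$, and combined with Step 1 this gives the equivalence of the two problems.

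\textbf{Step 3: uniqueness.} I would prove uniqueness by a comparison principle. Because $C>0$, the right-hand side is strictly negative, so for a subsolution $u\le0$ the function $\theta u$ with $\theta>1$ close to $1$ becomes a strict subsolution: $|D(\theta u)|^p\ge\theta|Du|^p$ while $C$ is multiplied only by $\theta$ after rescaling. This strictness compensates for the degeneracy of $\lambda_i$ in the standard doubling argument. For $\gamma>0$, comparison is immediate from the zeroth-order term.

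\textbf{Main obstacle.} I expect the hardest step to be Step 2, namely justifying rigorously that the spherical maximum $M$ inherits the one-dimensional inequality in the viscosity sense. This requires checking that $\max_{|x|=r}$ commutes with touching from above, and handling possible nondifferentiability of $M$. A fallback is to compare $u$ directly with the explicit radial supersolutions of Step 1 built on balls $B_\rho$ with $\rho\le\bar R$, translated inside $B_R$, so as to derive a contradiction with the boundary condition.
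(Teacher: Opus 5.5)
Your explicit radial construction of the \eqref{DPI0} solution for $R\le\bar R$ is correct; it is essentially \cite[Proposition 3.1]{BGR}, which the paper simply cites. One small slip: $t(r)\sim Cr$, not $r/C$. The problems are elsewhere.

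\textbf{Nonexistence for \eqref{DPI}: sign error.} Your argument that a solution of \eqref{DPI} forces $R\le\bar R$ does not work as written. The subsolution inequality for \eqref{DPI} reads $\lambda_i(D^2u)\ge |Du|^p+C+\gamma u$. Since $u\le 0$, the extra term $\gamma u$ \emph{weakens} this inequality rather than strengthening it. So you cannot conclude $\lambda_i(D^2u)\ge|Du|^p+C$, and ``the same argument'' does not go through.

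The missing idea is that the obstruction is pointwise in $r$ and that $u\to 0$ at $\partial B_R$. Test at a point of the sphere where $u=M(r)$, as in the proof of Proposition \ref{4marprop1}. This gives $-\frac{p-1}{(pr)^{p/(p-1)}}\le -C-\gamma M(r)$ for all $r\in(0,R)$, using density of points with nonempty superjet and continuity of $M$. Now let $r\to R^-$; since $M(r)\to 0$ by continuity up to the boundary, you obtain $R\le\bar R$. This is what the paper does, applying Proposition \ref{4marprop1} to $u_\gamma$, which is radial by uniqueness.

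Two smaller corrections in the same part:
\begin{itemize}
\item $\lambda_i\le\min\{\psi'',\psi'/r\}$ holds in general only for $i=1$; for $2\le i\le N-1$ one has $\lambda_i=\psi'/r$. So $M$ need not be convex. You do not need convexity: the bound $\lambda_i\le\psi'/r$ plus density of superjet points suffices.
\item In Step 1 the Perron roles are reversed. Since $u_0\le0$, you get $-\lambda_i(D^2u_0)+|Du_0|^p+\gamma u_0\le -C$, so the \eqref{DPI0} solution is a \emph{sub}solution of \eqref{DPI}. The supersolution is simply $0$. A large quadratic $-K(R^2-|x|^2)$ is not a subsolution, because $(2KR)^p$ dominates near the boundary.
\end{itemize}

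\textbf{Uniqueness for \eqref{DPI0}: the rescaling fails.} For $\theta>1$, the inequality $|D(\theta u)|^p=\theta^p|Du|^p\ge\theta|Du|^p$ works against you. You only get
\[
-\lambda_i(D^2(\theta u))+|D(\theta u)|^p\le -\theta C+(\theta^p-\theta)|Du|^p .
\]
This is below $-C$ only where $(\theta^p-\theta)|Du|^p<(\theta-1)C$, which requires at least $(p-1)|Du|^p<C$. That gradient bound is not available for an arbitrary subsolution. Even the explicit solution with $R=\bar R$ fails it in the limit, since $(p-1)|Du_0|^p\to C$ as $r\to\bar R$.

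Taking $\theta<1$ instead produces the right-hand side $-\theta C>-C$, which is again not strict. For $2\le i\le N-1$ the operator is not convex, so averaging with an auxiliary strict subsolution is not available either. Without a zeroth-order term, comparison for \eqref{DPI0} is the genuinely nontrivial ingredient. The paper does not reprove it; it invokes \cite[Proposition 3.8]{BGR}. Your uniqueness argument for \eqref{DPI}, via $\gamma>0$, is fine.
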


This solvability threshold again highlights the contrast with the uniformly elliptic framework, while remaining consistent with the nonexistence of blow-up solutions established in Theorem \ref{noblowupint}.

When the domain is a ball and the solutions are assumed to be radial, the necessary and sufficient conditions for the existence of solutions can be made explicit. For instance, the existence of a radial subsolution implies that a radial source $f(r)$ must satisfy the bound:
$$-\frac{p-1}{(pr)^{\frac{p}{p-1}}}\leq f(r) \quad \text{for } r\in(0,R).$$
Conversely, if the above inequality holds strictly, the condition becomes sufficient to guarantee existence (see Proposition \ref{4marprop2} for the precise statements). 
Since radial $C^2$ functions $u(x)=U(|x|)$ have Hessians whose eigenvalues are $\frac{U'}{r}$ and $U''$, the operator reduces to $F(D^2u)=G(\frac{U'}{r},U'')$ for an appropriate function $G$. This allows us to construct explicit $C^2$ radial solutions via ODE techniques. Though not strictly required for the core results of this paper, we establish,  in Proposition \ref{eqv-rad-gen}, an interesting  equivalence between the PDE and ODE formulations for viscosity radial sub- and supersolutions.

\medskip

As is standard in the analysis of nonlinear PDEs, the key tools for establishing these results are a priori estimates. Our first two estimates depend on the $L^\infty$ norm of the solution:
\begin{itemize}
\item For $p>1$ and any $i\in\{1,\dots,N\}$, we establish the optimal Lipschitz regularity of radial subsolutions away from the origin.
\item For $p\in (0,2]$ and $i=N$, we prove local interior Lipschitz regularity for supersolutions. Interestingly, this property is highly specific to the operator $\lambda_N(D^2u)$ and fails for general uniformly elliptic operators; we provide counterexamples to demonstrate this sharpness.
\end{itemize}

On the other hand, to handle the asymptotic limit in the ergodic problem where the solutions $u_\gamma$ explode while $\gamma u_\gamma$ remains bounded, we establish local Lipschitz estimates that depending on  $\gamma \|u_\gamma\|_\infty$ and not on $\|u_\gamma\|_\infty$. 
This is carried out in Proposition \ref{prop1regularity}. Similar a priori bounds that are helpful  when the  $L^\infty$ norm  is unbounded but $\gamma \|u_\gamma\|_\infty$ is bounded, have been obtained in the linear degenerate elliptic setting in \cite{CDLP,B2,AT}, 
and for fully nonlinear equations in \cite{BDL1}.

Before closing this introduction,  we mention that there has been a line of research stemming from the study of homogenization for Hamilton-Jacobi equations (see \cite{LPV}) that adopts a similar yet slightly different perspective. A major direction within this field focuses on the ergodic approximation (ergodic convergence or the \lq\lq vanishing discount\rq\rq\ limit) of Hamilton-Jacobi equations; under assumptions regarding the convexity of the equation—characteristic of Bellman-type equations—researchers have established that the solution to equation  \eqref{int2}, which involves a discount factor $\gamma > 0$, converges to the solution of equation \eqref{interg} as $\gamma \to 0^+$. Note that the boundary conditions for equations \eqref{int2} and \eqref{interg} are chosen appropriately,  such as periodic, state-constraint, Neumann, and Dirichlet conditions . For further details on the development of research in this direction—ranging from 
Hamilton-Jacobi equations to second-order degenerate elliptic Bellman equations—the reader is invited to refer to \cite{DFIZ, MT, IMT1, IMT2}.

\subsection*{Structure of the paper} 
The paper is organized as follows: Section \ref{LipReg} is dedicated to local Lipschitz regularity and a priori estimates. Section \ref{radial} investigates necessary and sufficient conditions for the existence of radial $C^2$ solutions. Section \ref{theproblem} analyzes the case $F(D^2u)=\lambda_i(D^2u)$ for $i<N$, where the most unconventional phenomena occur. Section \ref{ergodic} focuses on the top eigenvalue $i=N$, proving the ergodic dichotomy and related existence results. Finally, the Appendix provides the proof of the equivalence between the PDE and ODE formulations for radial viscosity solutions.

\subsection*{Notations and preliminaries}
Henceforth $\cS(N)$ will denote the space of $N\times N$ real symmetric matrices.
For $X\in\cS(N)$, we denote by 
$$
\lambda_1(X)\leq\ldots\leq\lambda_N(X)
$$
the eigenvalues of $X$ arranged in nondecreasing order. The norm of $X$ is 
$$
|X|=\sup_{|v|=1}\left| Xv\cdot v\right|,
$$
where $w\cdot v$ (also, interchangeably, $\lan w,v\ran$) denotes the standard inner product of $w,v\in\R^N$.
Observe that 
$$
|\lambda_i(X)-\lambda_i(Y)|\leq 
|X-Y|\quad\text{for $X,Y\in\cS(N)$.}
$$
Indeed, if we write 
$$
V_i=\{v_1,\ldots, v_i|\, v_k\in\R^N, \ v_k\cdot v_l=\delta_{kl}  \quad\text{for $k,l\in\{1,\ldots,i\}$}\},
$$
then
\[\begin{aligned}
\lambda_i(X)&=\inf_{V_i}\sup_{v\in V_i} Xv\cdot v,
\\
|\lambda_i(X)-\lambda_i(Y)|&\leq\sup_{V_i}\sup_{v\in V_i} |Xv\cdot v-Yv\cdot v|
=\sup_{V_i}\sup_{v\in V_i} |(X-Y)v\cdot v|
\\&
\leq |X-Y|. 
\end{aligned}\]

For any positive constant $a$, the nonlinear operators $F(X)=a\lambda_i(X)$ 
is positively homogeneous of degree $1$,  
degenerate elliptic and Lipschitz continuous: 
\begin{align}
&\label{F0}
F(tX)=tF(X)\quad \FOR t\geq 0, X\in\cS(N),
\\&X\leq Y \ \implies\ F(X) \leq F(Y),
\label{F1}
\\& 
|F(X)-F(Y)|\leq a|X-Y|.  \label{F2}
\end{align}
%
\section{Interior Lipschitz regularity results}\label{LipReg}

\subsection{A result for radial subsolutions}

\begin{pro}\label{prop2regularity}
Let $u\in\USC(B_R\backslash\left\{0\right\})$  be a radial and locally bounded viscosity subsolution of 
$$
-\lambda_i(D^2u)+|Du|^p=C\quad\,\text{in\, $B_R\backslash\left\{0\right\}$},
$$
where $i\in\left\{1,\ldots,N\right\}$, $p>1$ and $C\geq0$. Then for any  $\omega\subset\subset\omega'\subset\subset B_R\backslash\left\{0\right\}$, there exists a positive constant  $L=L(p,C,\dist(\omega,\partial\omega'),\left\|u\right\|_{L^\infty(\omega')})$
such that 
\[
|u(x)-u(y)|\leq L|x-y| \quad\text{for $x,y\in \omega$.}
\]
\end{pro}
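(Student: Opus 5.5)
Since $u(x)=U(|x|)$ is radial, the plan is to reduce everything to a one-dimensional statement for $U$ on an interval $[a,b]\subset(0,R)$ and prove a local Lipschitz bound for $U$. The natural tool is a barrier comparison in the spirit of Bernstein-type/``cone'' arguments for superlinear Hamiltonians. Note first that at any point where a smooth radial test function $\phi(x)=\Phi(|x|)$ touches $u$ from above, the eigenvalues of $D^2\phi$ are $\Phi''$ (radial direction) and $\Phi'/r$ (multiplicity $N-1$). Hence $\lambda_i(D^2\phi)\le \max(\Phi'',\Phi'/r)$. The subsolution inequality therefore gives
\[
|\Phi'|^p\le C+\max\Bigl(\Phi'',\frac{\Phi'}{r}\Bigr).
\]
This holds in the viscosity sense for $U$. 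The key point is that the term $\Phi'/r$ is only of order $|\Phi'|$, while the Hamiltonian is of order $|\Phi'|^p$ with $p>1$. So for large slopes the first-order term dominates whenever $\Phi''$ is controlled.

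Concretely, fix $x_0\in\omega$ with $r_0=|x_0|$, let $\delta=\min(\dist(\omega,\partial\omega'),r_0/2)$ (note $\dist(\omega,\partial\omega')$ already controls $r_0$ from below relative to $\omega'$), and $M=\|u\|_{L^\infty(\omega')}$. I would compare $U$ with barriers of the form
\[
\Phi(r)=U(r_0)+L|r-r_0|+K(r-r_0)^2 \quad\text{or rather}\quad \Phi(r)=U(r_0)+ g(|r-r_0|),
\]
with $g$ increasing and concave, $g(0)=0$, $g(\delta)\ge 2M$. The simplest choice is $g(s)=Ls-\kappa s^2$ on $[0,\delta]$ with $L$ large and $\kappa$ fixed, e.g. $\kappa=L/(2\delta)$, so that $g(\delta)=L\delta/2\ge 2M$ once $L\ge 4M/\delta$. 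One must check that $U\le\Phi$ on $[r_0-\delta,r_0+\delta]$. Suppose not. Then $U-\Phi$ attains a positive maximum at some $r_1\ne r_0$ in the open interval, since at $r_0$ it vanishes and at the endpoints $\Phi\ge U(r_0)+2M\ge U$. At $r_1$, $\Phi$ is smooth and $\Phi$ touches $U$ (shifted) from above. Moreover $|\Phi'(r_1)|=g'(|r_1-r_0|)\in[L/2,L]$, $\Phi''=-2\kappa<0$, and $|\Phi'/r_1|\le L/(r_0-\delta)\le 2L/r_0$. The viscosity inequality then yields
\[
(L/2)^p\le C+\frac{2L}{r_0},
\]
which fails for $L$ large depending only on $p,C,r_0,\delta,M$. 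This gives $U(r)-U(r_0)\le L|r-r_0|$ near $r_0$.

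The main obstacle is the reverse inequality $U(r_0)-U(r)\le L|r-r_0|$. Only the subsolution property is available, and $U$ is merely USC. I would handle it by swapping roles: run the same barrier argument centred at every point $r$ rather than at $r_0$. Any pair $r,s$ with $|r-s|<\delta/2$ lies inside a window where the one-sided estimate centred at $s$ gives $U(r)-U(s)\le L|r-s|$, and the one centred at $r$ gives $U(s)-U(r)\le L|r-s|$. This requires the constant to be uniform in the centre, which it is because $r_0$ is bounded below on a neighbourhood of $\omega$ by a quantity controlled through $\omega'$.

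Some care is needed on two further points. The centre value $U(r_0)$ must be finite and the touching legitimate at the maximum point, which holds by upper semicontinuity. Also, the kink of $g$ at $r_0$ never matters, because the maximum is attained away from $r_0$. Finally, the one-dimensional bound transfers to $\omega$: since $|\,|x|-|y|\,|\le|x-y|$, the estimate gives $|u(x)-u(y)|\le L|x-y|$ locally, and a chaining/covering argument with the bound $2M$ for far-apart points completes the proof.
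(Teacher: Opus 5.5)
Your strategy is the paper's. You reduce to $U$ and compare on a window $[s-\delta,s+\delta]$ with a cone-type barrier centred at $s$. Upper semicontinuity places a positive maximum of $U-\Phi$ away from the centre and the endpoints, and the contradiction comes from $|\Phi'|^p$ dominating $\Phi'/r$. The two-sided bound then follows by applying the one-sided estimate at every centre, exactly as you propose.

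\textbf{The gap.} The contradiction step fails as written because of your choice of concave profile. With $g(s)=Ls-\kappa s^2$ and $\kappa=L/(2\delta)$ you get $g'(s)=L(1-s/\delta)$, which decreases to $0$ at $s=\delta$. So the claim $|\Phi'(r_1)|\in[L/2,L]$ holds only when $|r_1-r_0|\le\delta/2$. If the maximum point $r_1$ lies in the outer half of the window, the slope there can be arbitrarily small. Then the viscosity inequality $|\Phi'(r_1)|^p\le C+\Phi'(r_1)/r_1$ gives no contradiction, and nothing in your argument rules out this location.

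\textbf{The repair.} Take $\kappa\le L/(4\delta)$, so that $g'\ge L/2$ on $[0,\delta]$ and $g(\delta)\ge 3L\delta/4$. Or simply take $\kappa=0$, which is what the paper does. Its barrier is $\phi(r)=U(s)+L|r-s|$, so $\phi''=0$, $|\phi'|=L$, and $\lambda_i(D^2\phi)\le\max\{0,\phi'/r_m\}\le L/(R_1-\delta)$. It then chooses $L\delta>2\|U\|_\infty$ and $L^p-L/(R_1-\delta)>C$. Concavity buys you nothing: the radial second derivative enters only through $\lambda_i(D^2\phi)\le\max\{\Phi'',\Phi'/r\}$, and $\Phi''\le 0$ already suffices there.

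\textbf{The rest.} The remainder of your argument is sound. Your observation that $|x_0|\ge\dist(\omega,\partial\omega')$, because $0\notin\omega'$, is exactly what makes the constant depend only on the listed quantities; the paper leaves this implicit in its dependence on $R_1-\delta$. The far-apart case needs no chaining: it is handled directly by the bound $2M/\delta$. One small technicality: take $\delta$ strictly smaller than $\dist(\omega,\partial\omega')$. Then the endpoints $r_0\pm\delta$ of the window still correspond to points of $\omega'$, where $|U|\le M$ is available.
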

\begin{proof}
Since $u(x)=U(r)$ by assumption, it is sufficient to prove that for any fixed $0< R_1< R_2<R$, there exist positive $\delta$ and $L$ such that for any  $r,s\in[R_1,R_2]$ the following holds:
$$
|r-s|<\delta\quad\implies\quad U(r)\leq U(s)+L|r-s|.
$$  
Fix $\delta>0$ such that $0<R_1-\delta<R_2+\delta<R$ and pick $L>0$ such that
\begin{equation}\label{9febeq2}
L>\frac{2\left\|U\right\|_{L^\infty([R_1-\delta,R_2+\delta])}}{\delta}\quad\;\text{and}\;\quad L^p-\frac{L}{R_1-\delta}>C.
\end{equation}
Set
$$
\phi(r)=U(s)+L|r-s|,\quad\;r\in(s-\delta,s+\delta).
$$
The result follows by showing that $U\leq\phi$ in $r\in(s-\delta,s+\delta)$. Suppose by contradiction that there exists $r_0\in(s-\delta,s+\delta)$ such that 
 \begin{equation}\label{9febeq3}
U(r_0)>\phi(r_0).
\end{equation}
Since 
\begin{equation}\label{9febeq4}
\phi(s\pm\delta)>U(s\pm\delta)
\end{equation}
by the first condition in \eqref{9febeq2}, then \eqref{9febeq3}-\eqref{9febeq4} imply that
$$
\max_{[s-\delta,s+\delta]}(U-\phi)=(U-\phi)(r_m)
$$
for some $r_m\in(s-\delta,s+\delta)\backslash\left\{s\right\}$. Since $\phi$ is a smooth function for $r\neq s$, we have 
\begin{equation*}
\begin{split}
\phi'(r_m)&=L\frac{r_m-s}{|r_m-s|}\\
\frac{\phi'(r_m)}{r_m}&\leq \frac{L}{R_1-\delta}\\
\phi''(r_m)&=0.
\end{split}
\end{equation*}
Then, by  the subsolution property of $u$ at $|x|=r_m$, we obtain 
$$
 C\geq-\lambda_i(D^2u(x))+|Du(x)|^p\geq-\frac{L}{R_1-\delta}+L^p
$$
which is in contradiction to \eqref{9febeq2}.
\end{proof}

\begin{rem}\label{convexity}
\rm 
It is worth pointing out that, in the special case $i=1$, the local Lipschitz regularity of subsolution holds under more general assumptions than those of Proposition \ref{prop2regularity}. Any upper semicontinuous viscosity subsolution of
 \begin{equation}\label{eql1}-\lambda_1(D^2u)= C\,\quad\text{ in $\Omega$\, (convex subset of $\R^N$)}\end{equation} is, in fact,  a semiconvex function, hence locally Lipschitz in $\Omega$. For the convenience of the reader  we provide a proof of this fact, and we refer to \cite{Ob} for further properties concerning continuous subsolutions of the equation $-\lambda_1(D^2u)=0$ in the whole $\R^N$.

 In the following we use the notation $x=(x_1,x')\in\R^N$, with $x'\in\R^{N-1}$. Since $v=u+\frac C2|x|^2$ satisfies, in the viscosity sense, the inequality $-\lambda_1(D^2v)\leq0$ in $\Omega$, we can suppose $C=0$ in \eqref{eql1}.
Assume, by contradiction, that $v$ is not convex in $\Omega$. Hence there exist $\bar x,\bar z\in\Omega$ and $\theta\in(0,1)$ such that $v(\theta\bar x+(1-\theta)\bar z)>\theta v(\bar x)+(1-\theta)v(\bar z)$. Since the inequality $-\lambda_1(D^2v)\leq0$ is invariant by orthogonal transformations and translations,  without loss of generality we can assume that $\bar x=(-s,0')$, $\bar z=(t,0')$, with $0<s\leq t$, and that  $\theta\bar x+(1-\theta)\bar z=0$. Choose $a,b\in\mathbb R^N$ such that the function $l(x)=a\cdot x+b$ satisfies 
\begin{equation*}
v(\bar x)=l(\bar x)\;,\quad v(\bar z)=l(\bar z).
\end{equation*}
Since $(v-l)(0)>0$, we infer by semicontinuity  that for any positive $\varepsilon$ sufficiently small 
\begin{equation}\label{7lugl26eq2}
(v-l)(x)<\frac12(v-l)(0)\qquad\text{$\forall (x_1,x')\in\R^N$ s.t. $x_1=-s$ or $x_1=t$, and $|x'|<\varepsilon$}.
\end{equation}
Consider now the open cylinder 
$$
C_\varepsilon=\left\{(x_1,x')\in\mathbb R^N\,:\;x_1\in(-s,t)\,,\;|x'|<\varepsilon\right\}.
$$
It is clear that $C_\varepsilon\subset C_{\varepsilon_0}\subset\subset\Omega$ provided $\varepsilon<\varepsilon_0$ and $\varepsilon_0$ is small enough. Set
$$
\varphi(x)=-\varepsilon x_1^2+\frac{1}{\varepsilon^3}{|x'|}^2+l(x).
$$
We claim that, for small $\varepsilon$, the function $v-\varphi$ in $\overline {C_\varepsilon}$ attains its maximum at an interior point. For this it is sufficient to show that $(v-\varphi)(x)<(v-\varphi)(0)=(v-l)(0)$ for any $x\in\partial C_\varepsilon$. Indeed, if $x_1=-s$ or $x_1=t$ and $|x'|<\varepsilon$, by \eqref{7lugl26eq2} we have
$$
(v-\varphi)(x)<\frac12(v-l)(0)+\varepsilon t^2<(v-l)(0).
$$
If, instead, $x_1\in[-s,t]$ and $|x'|=\varepsilon$ we obtain
$$
(v-\varphi)(x)\leq \max_{x\in\overline {C_{\varepsilon_0}}}v(x)+\varepsilon t^2-\frac1\varepsilon+\max_{x\in\overline{C_{\varepsilon_0}}}(-l(x))<0.
$$
Thus 
$$
\max_{x\in\overline {C_{\varepsilon}}}(v-\varphi)(x)=(v-\varphi)(x_\varepsilon)
$$
for some $x_\varepsilon\in C_\varepsilon$. By the subsolution property of $v$ we have $-\lambda_1(D^2\varphi(x_\varepsilon))\leq0$. On the other hand, 
$D^2\varphi(x)=\text{diag}\,(-2\varepsilon,\frac{2}{\varepsilon^3},\ldots,\frac{2}{\varepsilon^3})$ for any $x\in\R^N$, hence $-\lambda_1(D^2\varphi(x_\varepsilon))=2\varepsilon>0$. This contradiction proves the convexity of $v$.

Another approach to demonstrating the convexity of a subsolution $v$ to $-\gl_1(D^2v) \leq 0$ 
in $\Omega$ relies on the equivalence between viscosity subsolutions and distributional subsolutions.  First, the inequality $-\gl_1(D^2v) \leq 0$ can be viewed as a collection of linear partial differential inequalities $-\du{D^2 v(x)\,\xi,\xi}\leq 0$ in $\Omega$ for all $\xi\in\R^N$. 
We begin by considering the case where $v \in C(\Omega)$. 
It is known (see \cite[Theorem 1]{I'}) that for any $\xi\in\R^N$, 
if $v\in C(\Omega)$ is a subsolution to $-\du{D^2v\,\xi,\xi}\leq 0$ (which is indeed the case), then it is also a distributional subsolution to the same inequality. 
As is well known--and easily verified--this collection of distributional inequalities for $v$ implies
its convexity.  In the general case, the convexity can be established by applying appropriate normalization to $v$, considering its sup-convolution,  and viewing $v$ as the pointwise limit thereof. The details of this argument are left to the reader.

%

\smallskip

 We conclude this remark by showing that, if instead $i>1$, then any H\"older continuity may fail at $x=0$ for $1<p\leq2$. Consider the following simple example: let  $u(x)=U(|x|)$ be the function
$$U(r)=
\left\{\begin{array}{cl}
-\frac{1}{\log r} & \text{if $0<r< \delta$}\\
0 & \text{if $r=0$.}
\end{array}\right.
$$
 It is clear that $U\notin C^{0,\alpha}(B_{\delta})$ for any $\alpha\in(0,1]$. On the other hand, by a straightforward computation, for $\delta$ small enough and any $0<r<\delta$ one has
$$
U''(r)<0<\frac{U'(r)}{r}
$$
and 
$$
-\lambda_i(D^2u(x))+|Du(x)|^p=\frac{1}{r^2\log^2r}\left(-1+\frac{r^{2-p}}{{\left(\log^2r\right)}^{p-1}}\right)\leq0. 
$$
Note in addition that there are no $C^2$ test functions touching $u$ from above at $x=0$, hence $u$ is in fact a viscosity subsolution of the above equation in the whole ball $B_{\delta}$.\\
As far as H\"older estimates for subsolution of degenerate elliptic equations with superquadratic Hamiltonians, we refere to \cite{AT,CDLP}.
\end{rem}

\subsection{A result for supersolutions}
\begin{pro}\label{prop4regularity}
Let $u\in\LSC(\Omega)$  be a locally bounded viscosity supersolution of 
\begin{equation}\label{eqprop4regularity}
-\lambda_N(D^2u)+|Du|^p=-C\quad\,\text{in\, $\Omega$},
\end{equation}
where $0<p\leq2$ and $C\geq0$. Then for any  $\omega\subset\subset\omega'\subset\subset\Omega$, there exists a positive constant  
{$L=L(p,C,\dist(\omega,\partial\omega'),\left\|u\right\|_{L^\infty(\omega')})$} such that 
\begin{equation}\label{LIP}
|u(x)-u(y)|\leq L|x-y| \quad\text{for $x,y\in \omega$.}
\end{equation}
\end{pro}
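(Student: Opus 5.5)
The idea is to compare $u$ from below with a concave radial barrier centred at each point of $\omega$, in the spirit of the proof of Proposition \ref{prop2regularity}, but now using a genuinely second‑order barrier. Being a supersolution means that whenever a $C^2$ function $\varphi$ touches $u$ from below at $x_0$, we have $\lambda_N(D^2\varphi(x_0))\le |D\varphi(x_0)|^p+C$. So it suffices to build $\varphi$ with $\lambda_N(D^2\varphi)>|D\varphi|^p+C$ wherever $\varphi$ is smooth. Since $0<p\le2$, we have $t^p\le t^2+1$ for $t\ge0$. We may therefore always work with the quadratic Hamiltonian and the constant $K:=C+2$, and aim for $\lambda_N(D^2\varphi)\ge |D\varphi|^2+K$.

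\emph{Construction of the barrier.} Set $\delta=\dist(\omega,\partial\omega')$ and $M=\|u\|_{L^\infty(\omega')}$. For $\eta>0$, let $h$ solve $h'=-(h^2+K)$ on $[0,\delta]$ with blow‑up just before $0$, namely
$$h(r)=\sqrt K\cot\bigl(\sqrt K(r+\eta)\bigr).$$
We take $\delta$ (possibly reduced) so that $\sqrt K(\delta+\eta)<\pi/2$, which keeps $h>0$. Define $g(r)=\int_0^r h$, which equals $\log\bigl(\sin(\sqrt K(r+\eta))/\sin(\sqrt K\eta)\bigr)$. Then $g(\delta)\to+\infty$ as $\eta\to0^+$, and this logarithmic divergence is exactly where $p\le2$ enters. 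Fix $\eta=\eta(C,\delta,M)$ so small that $g(\delta)>2M$, and put $L:=h(0)=\sqrt K\cot(\sqrt K\eta)$. For $y\in\omega$ define
$$\varphi(x)=u(y)-g(|x-y|),\qquad x\in \overline{B_\delta(y)}\subset\omega'.$$
For $x\neq y$, with $r=|x-y|$, the Hessian $D^2\varphi$ has eigenvalues $-g''(r)=h^2+K$ (radial direction) and $-g'(r)/r<0$. Hence
$$\lambda_N(D^2\varphi)=h^2+K\ge |D\varphi|^p+C+1.$$

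\emph{Comparison.} I will show $u\ge\varphi$ on $B_\delta(y)$. Suppose not. Then $\min_{\overline{B_\delta(y)}}(u-\varphi)<0$, and this minimum is attained, since $u$ is lower semicontinuous and $\varphi$ is continuous. It is not attained at $y$, where $u-\varphi=0$. It is not attained on $\partial B_\delta(y)$ either, because there $\varphi\le M-g(\delta)<-M\le u$. So the minimum occurs at some $x_0$ with $0<|x_0-y|<\delta$, where $\varphi$ is smooth. The supersolution property then gives $\lambda_N(D^2\varphi(x_0))\le|D\varphi(x_0)|^p+C$, contradicting the inequality above.

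It follows that $u(x)\ge u(y)-g(|x-y|)\ge u(y)-L|x-y|$ for $|x-y|<\delta$, using $g'\le h(0)=L$ since $h$ is decreasing. Exchanging the roles of $x$ and $y$ gives the local Lipschitz bound. For $|x-y|\ge\delta$ we use instead $|u(x)-u(y)|\le 2M\le (2M/\delta)|x-y|$, which yields \eqref{LIP} with constant $\max(L,2M/\delta)$.

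The main obstacle is the barrier design: $\varphi$ must be smooth off the centre, satisfy the strict reverse inequality, and drop by more than $2M$ within radius $\delta$ while keeping a finite slope at $r=0$. This is where $p\le2$ is used. For $p>2$ the solution of $h'=-h^p$ that blows up at $-\eta$ has an integrable singularity, so $g(\delta)$ stays bounded as $\eta\to0^+$ and the construction fails. The concave kink at $y$ causes no difficulty, because $y$ is never the location of a negative minimum.
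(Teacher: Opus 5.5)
Your proof is correct and is essentially the paper's argument. At each $y\in\omega$ you place below $u$ a radial concave barrier of logarithmic type, whose radial eigenvalue dominates the gradient term precisely because $p\le 2$, and compare it with $u$ on $B_\delta(y)\setminus\{y\}$, where a negative interior minimum would contradict the supersolution inequality.

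The differences are cosmetic. You reduce to the quadratic Hamiltonian via $t^p\le t^2+1$ and use the exact Riccati profile $h(r)=\sqrt K\cot(\sqrt K(r+\eta))$, which separates the radius $\delta$ from the steepness parameter $\eta$; the paper uses $-\tfrac12\log(1+\delta^{-2}|x|)$ with a single small $\delta$ playing both roles. You also make explicit the symmetrization in $x,y$ and the bound $2M/\delta$ for $|x-y|\ge\delta$, which the paper leaves implicit.
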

\begin{proof}
Let $\omega\subset\subset\omega'\subset\subset\Omega$. We claim that there  exist positive $\delta$ and  $L$, {depending on} $p$, $C$, {$\dist(\omega,\partial\omega')$}, $\left\|u\right\|_{L^\infty(\omega')}$, and there exists a function $\varphi\in C(\overline B_\delta)\cap C^2(B_\delta\backslash\left\{0\right\})$ such that 
\begin{equation}\label{10febeq1}
\varphi(0)=0\;,\qquad\varphi(x)\geq-L|x|\quad\forall x\in B_\delta\;, \qquad \varphi(x)\leq-2\left\|u\right\|_{L^\infty(\omega')}\quad\forall x\in\partial B_\delta
\end{equation}
and 
\begin{equation}\label{10febeq2}
-\lambda_N(D^2\varphi(x))+|D\varphi(x)|^p<-C\qquad\forall x\in B_\delta\backslash\left\{0\right\}.
\end{equation}
Conditions \eqref{10febeq1}-\eqref{10febeq2} readily imply the result. For this, pick any $y\in\omega$ and consider the map $x\in B_\delta(y)\mapsto u(x)-u(y)-\varphi(x-y)$. Reducing $\delta$ {if} necessary, we may further assume that $B_\delta(y)\subset\omega'$.  Such function vanishes at $y$ and it is nonnegative on $\partial B_\delta(y)$, because of \eqref{10febeq1}. Moreover it does not attain any local minimum in $B_\delta(y)\backslash\left\{y\right\}$ in view of the strict inequality in $\eqref{10febeq2}$ and the viscosity supersolution property of $u$. Hence $u(x)-u(y)-\varphi(x-y)\geq0$ for any $x\in  B_\delta(y)$ and, by the second property in \eqref{10febeq1}, we conclude
$$
u(x)-u(y)\geq-L|x-y|\qquad\forall x\in B_\delta(y).
$$
\smallskip

\noindent
Consider, for $x\in B_\delta$, the function $\varphi(x)=-\frac12\log\left(1+\delta^{-2}|x|\right)$, where $\delta$ is a  positive constant to be determined. \\
The first condition in \eqref{10febeq1} is satisfied regardless of $\delta$, the second holds with $L=\frac{\delta^{-2}}{2}$. Moreover, if $\delta$ is sufficiently small, we also have, for $x\in\partial B_\delta$, that
$$
\varphi(x)=-\frac12\log(1+\delta^{-1})\leq-2\left\|u\right\|_{L^\infty(\omega')}.
$$
Thus, all conditions in \eqref{10febeq1} are fulfilled. It remains to prove \eqref{10febeq2} for small $\delta$. Since $\varphi(x)=\phi(|x|)$, where $\phi(r)=-\frac12\log(1+\delta^{-2}r)$ satisfies 
{
$\phi''(r)=\frac{\delta^{-4}}{2{\left(1+\delta^{-2}r\right)}^2}>0>\frac{\phi'(r)}{r}$,
}%
 then for $x\in B_\delta\backslash\left\{0\right\}$
\begin{equation}\label{15apreq1}
-\lambda_N(D^2\varphi(x))+|D\varphi(x)|^p=-\frac{\delta^{-4}}{2{\left(1+\delta^{-2}|x|\right)}^2}\left(1-2^{1-p}{\left(\delta^2+|x|\right)}^{2-p}\right).
\end{equation}
Since $p\leq2$, we also have for $\delta$ sufficiently small 
\begin{equation}\label{15apreq2}
1-2^{1-p}{\left(\delta^2+|x|\right)}^{2-p}\geq1-2^{1-p}{\left(\delta^2+\delta\right)}^{2-p}\geq\frac12.
\end{equation}
Using \eqref{15apreq1}-\eqref{15apreq2}, we conclude that for any $x\in B_\delta\backslash\left\{0\right\}$
\begin{equation*}
-\lambda_N(D^2\varphi(x))+|D\varphi(x)|^p\leq-\frac{\delta^{-4}}{4{\left(1+\delta^{-2}|x|\right)}^2}\leq-\frac{\delta^{-2}}{4{\left(1+\delta\right)}^2}<-C
\end{equation*}
provided $\delta$ is small enough.
\end{proof}

\begin{rem}
\rm Proposition \ref{prop4regularity} is formulated for supersolutions of $-\lambda_N(D^2u)+|Du|^p=-C$ in $\Omega$, but its conclusion holds more generally for supersolutions of the equation 
\begin{equation}\label{3lug26eq1}
-\lambda_N(D^2u)+b|Du|^p=-C\quad\,\text{in\, $\Omega$},
\end{equation}
where $b$ is any real constant. Indeed, if $b\leq1$ then any supersolution  of \eqref{3lug26eq1}  is in turn a supersolution of \eqref{eqprop4regularity}. If instead $b>1$, it is sufficient to slightly modify the test function $\varphi$ used in the proof of Proposition \ref{prop4regularity}, by considering  $\varphi(x)=-\frac{1}{2b}\log\left(1+\delta^{-2}|x|\right)$, which yields the estimate \eqref{LIP}. In this case, the constant $L$  depends on $b$ as well.\\ It is also worth pointing out that from the interior Lipschitz estimates for supersolutions of \eqref{3lug26eq1}, it immediately follows that analogous estimates hold for subsolutions of $-\lambda_1(D^2v)+b|Dv|^p=C$ in  $\Omega$, simply by setting $v=-u$ and using the fact that $b$ is arbitrary. 
\end{rem}

The restriction on $p$ in Proposition \ref{prop4regularity} is optimal, as the following example shows.

\begin{exa}\label{ex1}
{\rm Let $u(x)=U(r)$ be the radial function
$$U(r)=
\left\{\begin{array}{cl}
\frac{1}{\log r} & \text{if $0<r< \delta$}\\
0 & \text{if $r=0$.}
\end{array}\right.
$$
In a neighborhood of the origin, the function $U(r)$ is convex and decreasing. Thus, if $p>2$ and $\delta=\delta(p)$ is sufficiently small, 
for $r=|x|\neq0$ and $r<\delta$ we have 
\begin{equation*}
\begin{split}
-\lambda_N(D^2u(x))+|Du(x)|^p&=-U''(r)+|U'(r)|^p\\
&=\frac{1}{r^p{\left(\log^2r\right)}^p}\left(1-r^{p-2}{\left(\log^2r\right)}^{p-1}+2r^{p-2}{\left|\log r\right|}^{2p-1}\right)\geq0.
\end{split}
\end{equation*}
Moreover there are no $C^2$ test {functions} touching $u$ at $x=0$ from below. Hence $u$ is a supersolution of 
$$
-\lambda_N(D^2u(x))+|Du(x)|^p=0	\;\quad\text{in $B_\delta$}
$$
and $u\notin C^{0,\alpha}(B_\delta)$ for any $\alpha>0$.}
\end{exa}

We wish to emphasize that the Hölder continuity property for supersolutions stated in Proposition \ref{prop4regularity} does not hold, in general, if the operator $\lambda_N(D^2u)$ is replaced by a uniformly elliptic operator $F(D^2u)$, and fails even for the smaller class of supersolutions to the equation $-F(D^2u) = -C$. Below we provide a counterexample in the special case $F(D^2u) = \Delta u$ and $C = 0$.

\begin{exa}
{\rm Let $u(x)=U(r)$ be the same radial function of Example \ref{ex1}.  The H\"older continuity fails at $x=0$ for any $\alpha>0$. On the other hand, $u$ is superharmonic in $B_1$. For this, note that there are no $\varphi\in C^2(B_1)$ such that $u-\varphi$ attains a local minimum at $x=0$, while for $x\in B_1\backslash\left\{0\right\}$ it holds
$$
-\Delta u(x)=\frac{1}{r^2\log^2r}\left(N-2-\frac{2}{\log r}\right)>0.
$$
}
\end{exa}

\subsection{Lipschitz estimates for solutions}

Consider the PDE
\begin{equation} \label{eq1Feb09}
-F(D^2u)+H(x, Du)+\gamma u=0 \ \quad\text{in $B_2$},
\end{equation}
where $\gamma\geq 0$ is a given constant. Here we focus on the case where
\[
F(X)=a\lambda_i(X),\qquad H(x,q)=b|q|^p-f(x),
\]
with constants $a,b>0$ and $p>1$.  
We assume that $f\in \Lip(\overline{B_2})$, with Lipschitz constant $\Lambda\geq 0$. That is, 
\[
|f(x)-f(y)|\leq\Lambda|x-y|.
\]
Set 
\[
\Lambda_0=\|f\|_\infty.
\]
Note
that for any $x,y\in B_1$, $q,q_i\in\R^N$, $i=1,2$,
\begin{align}
&H(x,q)\geq b|q|^p-\Lambda_0, \label{H1}
\\&|H(x,q)-H(y,q)|=|f(x)-f(y)|\leq \Lambda|x-y|, \label{H2}
\\ \label{H3} & |H(x,q_1)-H(x,q_2)|\leq bp (|q_1|^{p-1}+|q_2|^{p-1})|q_1-q_2|. 
\end{align}

\begin{pro}\label{prop1regularity}
 If $u\in C(\overline{B_2})$ is a viscosity solution of \eqref{eq1Feb09}, then
\[
|u(x)-u(y)|\leq L_0|x-y| \quad\text{for $ x,y\in B_{1/2}$},
\]
where 
the constant $L_0$ depends only on 
$\gamma\|u\|_\infty, p, a, b, \gL_0, \gL$. 
\end{pro}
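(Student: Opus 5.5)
The plan is an Ishii--Lions doubling of variables in which the localization does not use $\|u\|_\infty$. The growth $b|q|^p$ with $p>1$ should supply the needed control, while $\gamma\|u\|_\infty$ enters only as a bound on the zero-order term.

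\emph{Normalization.} Set $M_0:=\Lambda_0+\gamma\|u\|_\infty$. Wherever the subsolution inequality is tested at a point with test gradient $q$ and test Hessian $X$, it gives
\[
b|q|^p\leq M_0+a\lambda_i(X).
\]
I will rely on this bound and on the exact cancellation of the $\gamma u$ terms at a maximum point. I never use a bound on $u$ itself.

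\emph{Test function.} Let $\theta(x)=1-|x|^2$ on $B_1$. For $L>0$ large, maximize over $\overline{B_1}\times\overline{B_1}$
\[
\Phi(x,y)=u(x)-u(y)-L\,\theta(x)^{-\beta}\,\psi(|x-y|),
\]
where $\psi(r)=r-\kappa r^{3/2}$ is concave and increasing on small intervals, and $\beta>0$ is to be fixed from $p$ (I expect $\beta=2/(p-1)$ or similar, the usual coercive exponent). Because $u\in C(\overline{B_2})$ is bounded and $\theta^{-\beta}\to\infty$ at $\partial B_1$, the maximum is interior in $x$. A symmetric weight, such as $\theta(x)^{-\beta}+\theta(y)^{-\beta}$, keeps $y$ interior as well. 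The aim is to show that $\sup\Phi\le0$ once $L\ge L_0(M_0,p,a,b,\Lambda)$. This gives the Lipschitz bound on $B_{1/2}$ with constant $L\,(3/4)^{-\beta}$, and it does not matter how large $\|u\|_\infty$ is.

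\emph{Contradiction argument.} Assume $\Phi(\hat x,\hat y)>0$ with $\hat x\ne\hat y$. The Ishii lemma produces $(q_x,X)\in\overline J^{2,+}u(\hat x)$ and $(q_y,Y)\in\overline J^{2,-}u(\hat y)$. Subtracting the two inequalities, the term $\gamma(u(\hat x)-u(\hat y))$ is positive and can be dropped. Using $\lambda_i(X)-\lambda_i(Y)\le\lambda_N(X-Y)$ we get
\[
b\bigl(|q_x|^p-|q_y|^p\bigr)\le a\lambda_N(X-Y)+\Lambda|\hat x-\hat y|.
\]
The concavity of $\psi$ in the direction $\hat x-\hat y$ makes $\lambda_N(X-Y)$ very negative, of order $-L\theta^{-\beta}\psi''$. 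The remaining Hessian terms come from derivatives of the weight and are of order $L\theta^{-\beta-2}$.

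\emph{Main obstacle.} The gradient difference $q_x-q_y=L\psi\,D(\theta^{-\beta})$ has size about $L\beta\theta^{-\beta-1}|\hat x-\hat y|$, and it makes $|q_x|^p-|q_y|^p$ sign-indefinite. The localization terms must therefore be absorbed using the single-point inequality $b|q_x|^p\le M_0+a\lambda_i(X)$. I will keep $\lambda_i(X)$ as a positive resource, bounded above by the Ishii matrix estimate, and play it against $|q_x|\approx L\theta^{-\beta}$. The exponent $\beta$ is chosen so that $(L\theta^{-\beta})^p$ beats $L\theta^{-\beta-2}$ and $L\theta^{-\beta-1}$ uniformly in $\theta$ once $L$ is large, which is exactly where $p>1$ is used. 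If the combination proves awkward, my fallback is a two-step argument. First I prove, via the coercivity bound, an oscillation estimate on $B_{3/4}$ depending only on $M_0,p,a,b$, using the barrier-type comparison of Proposition~\ref{prop2regularity} applied with $C=M_0$. Then I run the standard Ishii--Lions argument with a quadratic localization whose constant depends on that oscillation.
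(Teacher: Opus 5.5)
There is a genuine gap: the step that is supposed to produce a positive term does not work for $a\lambda_i$, because it is the uniform-ellipticity step of Ishii--Lions.

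\textbf{Where it fails.} The claim that concavity of $\psi$ makes $\lambda_N(X-Y)$ very negative is false. Ishii's matrix inequality tested on $(e,-e)$, $e=(\hat x-\hat y)/|\hat x-\hat y|$, makes only $\langle (X-Y)e,e\rangle$ very negative. That controls $\lambda_1(X-Y)$. In directions orthogonal to $e$ you only get upper bounds by weight terms. For $F=a\lambda_i$, the best bound in terms of $X-Y$ alone is $\lambda_i(X)-\lambda_i(Y)\le\lambda_N(X-Y)$. A negative rank-one perturbation in the direction $e$ need not lower $\lambda_i$ at all: if $i\ge2$ and $e$ is an eigenvector of $Y$ for $\lambda_1(Y)$, then $\lambda_i(Y-Ke\otimes e)=\lambda_i(Y)$ for every $K>0$. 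So your subtracted inequality $b(|q_x|^p-|q_y|^p)\le a\lambda_N(X-Y)+\Lambda|\hat x-\hat y|$ has a sign-indefinite left side and a right side that is not negative, and no contradiction follows.

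Your single-point inequality $b|q_x|^p\le M_0+a\lambda_i(X)$ is the right first step, but it is not a contradiction by itself. The matrix bound only gives $\lambda_i(X)\lesssim\hat\rho/\hat r$, with $\hat\rho\approx|q_x|$ and $\hat r=|\hat x-\hat y|$. This term is not of the form $L\theta^{-\beta-1}$ or $L\theta^{-\beta-2}$, so $(L\theta^{-\beta})^p$ cannot beat it. All you get is $\hat r\lesssim\hat\rho^{1-p}$, which is the paper's \eqref{eq3+}. Adding $\mu$ times this inequality to the subtracted one is useless as long as $\lambda_i(X)$ and $\lambda_i(Y)$ are estimated separately: the cost $\mu a\hat\rho/\hat r$ dominates the gain $\mu(b\hat\rho^p-M_0)$, which is exactly what the single-point inequality says.

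\textbf{The fallback fails too.}
\begin{itemize}
\item Proposition~\ref{prop2regularity} is for radial subsolutions. Its constant depends on $\|u\|_{L^\infty(\omega')}$, since it needs $L>2\|U\|_\infty/\delta$.
\item A non-radial cone $u(y)+L|x-y|$ has tangential Hessian eigenvalues $L/|x-y|\to\infty$. It is therefore not a supersolution near $y$ when $i\ge2$.
\item The ``standard Ishii--Lions'' second step again needs uniform ellipticity.
\end{itemize}

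\textbf{The missing idea.} The paper takes it from \cite{AT}: estimate an asymmetric combination of the two inequalities jointly.
\begin{itemize}
\item Multiply the subsolution inequality at $\hat x$ by $s^2$, with $s=1+\sigma\hat r$, and subtract the supersolution inequality at $\hat y$.
\item Bound $a\lambda_i(s^2X)-a\lambda_i(Y)\le a\lambda_N(s^2X-Y)$ by testing the matrix inequality on vectors $(s\xi,\xi)$. For the $|x-y|$-part of the penalization $\phi$ one has $\phi_{xx}=\phi_{yy}=-\phi_{xy}$, so
\[
s^2X-Y\le (s-1)^2\phi_{xx}+(\text{terms from the weight})+O(\varepsilon).
\]
\item The second-order cost is then $a(s-1)^2\hat\rho/\hat r=a\sigma^2\hat\rho\hat r$, quadratic in $\sigma$.
\item The Hamiltonian contributes $(s^2-1)(b\hat\rho^p-M_0)\ge\sigma b\hat\rho^p\hat r$, linear in $\sigma$.
\item The $\gamma u$ terms no longer cancel. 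They leave $(s^2-1)\gamma u(\hat y)$, which is where $\gamma\|u\|_\infty$ enters $L_0$.
\end{itemize}
Taking $\sigma=\frac{b}{2a}\hat\rho^{p-1}$ gives the net gain $\frac{b^2}{4a}\hat\rho^{2p-1}\hat r$. Then use $\hat r\lesssim\hat\rho^{1-p}$ and a weight $g$ with $|Dg|\le Cg^p$, $|D^2g|\le Cg^{2p-1}$; your $\theta^{-\beta}$ with $\beta\ge 1/(p-1)$ qualifies. All error terms (gradient mismatch, weight Hessian, $\Lambda\hat r$) are then $O(L^{1-p})\hat\rho^{2p-1}\hat r$, and $L$ large gives the contradiction. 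No concavity is needed; the paper's penalization $Lg(y)|x-y|+\frac{\alpha}{2}|x-y|^2$ is even convex in $|x-y|$.
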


While there is a vast body of research on Lipschitz continuity of solutions to elliptic equations, we will mention only \cite{AT, B1,B2, CDLP} here as highly relevant prior studies in the viscous solution framework.
The proof below follows that of \cite[Theorem 3.1]{AT}, with minor variations, which mostly take care of the effects of nonlinearity of the operator $F(D^2 u)$. 

\begin{rem}
We point out that the Lipschitz estimate in Proposition \ref{prop1regularity} is quite general. Indeed, it holds not only for equations whose principal part is given by $F(X)=a\lambda_i(X)$, with $a>0$ and $i\in\left\{1,\ldots,N\right\}$, but for any operator $F$ satisfying  assumptions \eqref{F0}-\eqref{F1}-\eqref{F2}.
\end{rem}

\begin{proof}

According to \cite[Theorem 3.1]{AT}, there is a function $g\in C^2(B_1)$ and a 
constant $C>0$ such that
\begin{align}
&g=1 \ \ \IN B_{1/2}, \label{phi0}
\\
&g\geq 1 \ \ \IN B_1, \label{phi1}
\\&\lim_{|x|\to 1^-}g(x)=\infty, \label{phi2}
\\&|Dg(x)|\leq Cg(x)^p \ \ \FOR x\in B_1, \label{phi3}
\\&|D^2g(x)|\leq Cg(x)^{2p-1} \ \ \FOR x\in B_1.\label{phi4}
\end{align}

Let $u\in C(\ol{B_2})$ be a solution of \eqref{eq1Feb09}. 
Let $L_0>0$ be a constant, which will be fixed so as to 
depend only on $\gamma\|u\|_\infty, p, a, b, \gL_0, \gL, C$
at the end of the proof.  

If 
\[
u(x)-u(y)-Lg(y)|x-y|\leq 0 \ \ \ \FORALL  (x,y)\in B_{1/2}\tim B_{1/2},
\]
then 
\[
u(x)-u(y)\leq L|x-y| \ \ \FORALL  x,y\in B_{1/2},
\]
and, in addition, if $L$ can be chosen so that $L\leq L_0$, then we are done. 

Thus, we assume that 
\begin{equation}\label{sup>0}
\sup_{x,y\in B_{1/2}}(u(x)-u(y)-L|x-y|)=\sup_{x,y\in B_{1/2}}(u(x)-u(y)-Lg(y)|x-y|)> 0,
\end{equation}
and will show that this leads a contradiction if $L>L_0$, with an appropriate choice of constant $L_0$. 

Observe that, for every fixed $\delta>0$, if 
\[
u(x)-u(y)\leq L|x-y| \ \ \FORALL x,y\in B_{1/2}, \text{ with } |x-y|\leq \delta,
\]
then $u(x)-u(y)\leq L|x-y|$ for all $x,y\in B_{1/2}$. 
By \eqref{sup>0},
there are a constant $c>0$ and points $x,y\in B_{1/2}$ such that 
\[
c<u(x)-u(y)-L|x-y|.
\]
Let $n\in\N$ and consider a partition of the line segment $[x,y]$ into $n$ equal segments. 
As easily seen, we can choose one of the $n$ equal segments, denoted by $[x_n,y_n]$, so that  
$n^{-1}c<u(x_n)-u(y_n)-L|x_n-y_n|$. Let $\ga>0$. Noting that $|x_n-y_n|=n^{-1}|x-y|<1$, we deduce that $
u(x_n)-u(y_n)-L|x_n-y_n|-\fr{\ga}{2}|x_n-y_n|^2 
>n^{-1}\left(c-\fr{\ga}{2n}\right). $ 
This shows that there exists a point $(\bar x, \bar y)\in B_{1/2}\tim B_{1/2}$ such that 
\[
u(\bar x)-u(\bar y)-Lg(\bar y)|\bar x-\bar y|-\frac{\ga}{2}|\bar x-\bar y|^2>0. 
\] 
%
%
In what follows, we consider the function 
\[
\Phi(x,y):=Lg(y)|x-y|+\frac{\ga}{2}|x-y|^2 \ \ \FOR (x,y)\in \ol{B_2}\tim B_1.
\]
where $L, \ga$ are constants satisfying $L,\ga\geq 1$.  
It follows that 
\[
0<\sup_{(x,y)\in\ol{B_2}\tim B_1}(u(x)-u(y)-\Phi(x,y))\leq \max_{x,y\in\ol{B_2}}(u(x)-u(y))<+\infty,
\]
and the maximum of $u(x)-u(y)-\Phi(x,y)$ is achieved at a point in $\ol{B_2}\tim B_1$. The attainability of this maximum value is the only place in the proof where the continuity of $u$ plays a role.

Let $(\hat x, \hat y)\in \ol{B_2}\tim B_1$ be a maximum point of
the function $u(x)-u(y)-\Phi(x,y)$. Clearly, we have $\x\not= \y$.

In addition that $|\hat x-\hat y|>0$, since  
\[
u(\hat x)-u(\hat y)>\Phi(\hat x,\hat y)\geq \frac{\ga}{2}|\hat x-\hat y|^2,
\]
by selecting $\ga>1$ large enough, 
we may always assume that 
\begin{equation}\label{dxy}
|\hat x-\hat y|\leq 1. 
\end{equation}
Moreover, since $\hat y\in B_1$, we have 
\[
\hat x\in B_2.
\] 
As the choice of $L_0$ will not depend on $\ga$, we can choose $\ga$ to be as large as required in the proof. 

Since $u$ is a viscosity solution of \eqref{eq1Feb09}, we have
\begin{equation}\label{visco}\begin{cases}
-F(X)+H(\hat x, D_x\Phi(\hat x,\hat y))+\gamma u(\hat x)\leq 0&\text{ if } \ (X,D_x\Phi(\hat x,\hat y))\in \ol{J}^{2,+}u(\hat x),\\[3pt] 
-F(-Y)+H(\hat y,-D_y\Phi(\hat x,\hat y))+\gamma u(\hat y)\geq 0&\text{ if } \  (Y,D_y\Phi(\hat x,\hat y))\in\ol{J}^{2,+}(-u)(\hat y).
\end{cases}
\end{equation}

We will let  $r=|x-y|$.
Recall that 
$$r_x=D_xr=\frac{x-y}{r}=-r_y=-D_yr,$$
while
$$r_{xx}=D^2_{xx}r=\frac{1}{r}(I_N-r_x\otimes r_x)=r_{yy}=-r_{xy}=-r_{yx},$$
where $I_N$ denotes the identity matrix of order $N$.  Here and henceforth, 
we write $r_x=D_x r$, $r_y=D_y r$, $r_{xx}=D_x^2 r$, etc. 
We compute that, if $x\not=y$ and $y\in B_1$, 
\beq \label{eq3-2Isa}\bald
\Phi_x(x,y)&\,=(Lg(y)+\ga r)r_x,\\
\Phi_{xx}(x,y)&\,=(Lg(y)+\ga r)r_{xx}+\ga r_x\otimes r_x\\
&\,=(\frac{Lg(y)}{r}+\ga )I_N-\frac{Lg(y)}{r} r_x\otimes r_x
\\&\leq\frac{|\Phi_x(x,y)|}{r}I_N.
\eald
\eeq

We have obtained that for $\hat r=|\hat x-\hat y|$ and $
\hat\rho=Lg(\hat y)+\ga\hat r$,
the following holds
\[
|\Phi_x(\hat x,\hat y)|= \hat\rho>L\geq 1\quad
\AND\quad \Phi_{xx}(x,y)\leq \frac{\hat\rho}{\hat r} I_N.
\]

Observe also that if $x\not=y$, 
\beq\label{eq3-1}\bald
\Phi_y(x,y)\,&=(Lg(y)+\ga r)r_y + LDg(y)r=- \Phi_x(x,y)+ LDg(y)r
\\ \Phi_{xy}(x,y)\,&=- \Phi_{xx}(x,y)+ r_x\otimes LDg(y)
\\ \Phi_{yx}(x,y)\,&=- \Phi_{xx}(x,y)+ LDg(y)\otimes r_x
\\ \Phi_{yy}(x,y)\,&=(Lg(y)+\ga r)r_{yy} +\ga r_y\ot r_y+ LD^2g(y)r+ L(Dg(y)\ot r_y
+r_y\ot Dg(y))
\\&= \Phi_{xx}(x,y) + LD^2g(y)r+ L(Dg(y)\ot r_y+r_y\ot Dg(y)), 
\eald \eeq
and, by using \eqref{phi3},
\beq\label{eq3}\bald
&|\Phi_x(\hat x,\hat y)+\Phi_y(\hat x,\hat y)|=L|Dg(\hat y)|\hat r
\leq LCg(\hat y)^p\hat r, \quad
\\&|\Phi_y(\hat x,\hat y)|\leq \hat\rho+LCg(\hat y)^p\hat r. 
\eald
\eeq

Hence, by \eqref{visco}, \eqref{H1}, \eqref{F1}, \eqref{F2}, and \eqref{F0}, we get 
\[\bald
0&\,\geq -F(\Phi_{xx}(\hat x,\hat y))+H(\hat x,\Phi_x(\hat x,\hat y))+\gamma u(\hat x)
\\&\,
\geq -F\left(\frac{\hat\rho}{\hat r}I_N\right)+H(\hat x,\Phi_x(\hat x,\hat y))+\gamma u(\hat x)
\\&\geq -a\frac{\hat\rho}{\hat r} +b \hat\rho^p -\Lambda_0+\gamma u(\hat x).
\eald\]
Since $\hat r\leq 1$, we find that
\begin{equation}\label{eq3+} 
b\hat\rho^p \hat r \leq a \hat\rho + (\Lambda_0+\gamma\|u\|_\infty)\hat r. 
\end{equation}

\bigskip
%

Let $\varepsilon>0$. We choose $X_\varepsilon,Y_\varepsilon\in\cS(N)$ (see, e.g.,  \cite[Theorem 3.2]{CIL}) so that 
\[\bald
(\Phi_x(\hat x,\hat y), \,X_\varepsilon)&\in \ol J^{2,+} u(\hat x), \quad (\Phi_y(\hat x,\hat y),\,Y_\varepsilon)\in\ol J^{2,+}(-u)(\hat y),
\\& \bmat X_\varepsilon & 0\\0& Y_\varepsilon \emat\leq  A+\varepsilon A^2,
\eald\]
where $A:=D^2\Phi(\hat x,\hat y)\in \cS(2N)$. Immediate consequences of these are
\begin{equation}\label{eq4}\bald
-F(X_\varepsilon)+H(\hat x, \Phi_x(\hat x,\hat y))+\gamma u(\hat x)&\,\leq 0,
\\ -F(-Y_\varepsilon)+H(\hat y,  -\Phi_y(\hat x,\hat y))+\gamma u(\hat y)&\,\geq 0.
\eald \end{equation}
Let $s>1$ be a constant to be fixed later. By \eqref{eq4} together with \eqref{F0}, we have
\begin{equation}\label{eq5}
0\geq -F(s^2X_\varepsilon)+s^2H(\hat x,\Phi_x(\hat x,\hat y)) 
+F(-Y_\varepsilon)-H(\hat y,-\Phi_y(\hat x,\hat y))+s^2\gamma u(\hat x)-\gamma u(\hat y).
\end{equation}
Observe that 
\begin{equation}\label{eq6} \bald
s^2 X_\varepsilon+Y_\varepsilon&\,=
\bmat sI_N & I_N\emat \bmat X_\varepsilon&0 \\ 0 &Y_\varepsilon\emat \bmat sI_N \\ I_N\emat
\leq \bmat sI_N & I_N\emat (A+\varepsilon A^2) \bmat sI_N \\ I_N\emat
\\ &\,=\bmat sI_N & I_N\emat A\bmat sI_N \\ I_N\emat+O(\varepsilon)\\
&\, = s^2\Phi_{xx}(\hat x,\hat y)+s(\Phi_{xy}(\hat x,\hat y)+\Phi_{yx}(\hat x,\hat y))+\Phi_{yy}(\hat x,\hat y)+ O(\varepsilon).
\eald\end{equation}
Using \eqref{eq3-2Isa}-\eqref{eq3-1}, we get
\begin{equation}\label{eq7isa} \bald
&s^2\Phi_{xx}(\hat x,\hat y)+s(\Phi_{xy}(\hat x,\hat y)+\Phi_{yx}(\hat x,\hat y))+\Phi_{yy}(\hat x,\hat y)\\
=&(s-1)^2\Phi_{xx}(\hat x,\hat y) + (s-1)L( r_x\otimes Dg(\hat y)+ Dg(\hat y)\otimes r_x) +LD^2g(\hat y) \hat r\\
\leq &(s-1)^2\frac{\hat \rho}{\hat r}I_N+2(s-1)L|Dg(\hat y)|I_N+L|D^2g(\hat y) |\hat r I_N.
\eald\end{equation}

Finally we have obtained, using \eqref{phi3},\eqref{phi4}
\begin{equation}\label{eq7isa} \bald
s^2 X_\varepsilon+Y_\varepsilon&\,\leq  \left(\frac{(s-1)^2\hat\rho}{\hat r}
+2(s-1)LCg(\hat y)^p
+LCg(\hat y)^{2p-1}\hat r\right)I_N + O(\varepsilon).
\eald\end{equation}

%
%
We denote by $\hat g=g(\hat y)$ and we combine this with \eqref{eq5}, \eqref{F1}, \eqref{F2}, and  also the fact that $u(\hat x)>u(\hat y)$, 
to deduce 
\[\bald
0&\,\geq -F\left(-Y_\varepsilon +(s-1)^2 \frac{\hr}{\hat r}I_N 
+2(s-1)LC \hat g ^p I_N +LC\hat g ^{2p-1}\hat r I_N+O(\varepsilon)\right)
\\ &\,\ \quad+F(-Y_\varepsilon) +s^2H(\hat x,\Phi_x(\hat x,\hat y))-H(\hat y,-\Phi_y(\hat x,y))+s^2\gamma u(\hat x)-\gamma u(\hat y)
\\&\,\geq -a\left((s-1)^2 \frac{\hr}{\hat r} 
+2(s-1)CL\hat g ^p+CL\hat g ^{2p-1} \hat r+O(\varepsilon)\right)
\\ &\,\ \quad+s^2H(\hat x,\Phi_x(\hat x,\hat y))-H(\hat y,-\Phi_y(\hat x,y))+(s^2-1)\gamma u(\hat y).
\eald\]
Sending $\varepsilon\to 0$ yields
\[
\bald
0&\,\geq -a\left((s-1)^2 \frac{\hr}{\hat r} 
+2(s-1)LC \hat g ^p +LC\hat g ^{2p-1}\hat r\right)
\\ &\,\ \quad+s^2H(\hat x,\Phi_x(\hat x,\hat y))-H(\hat y,-\Phi_y(\hat x,\hat y))
+(s^2-1)\gamma u(\hat y).
\eald
\]
We choose
\[
s=1+\beta \hat r,
\]
where $\beta>0$ is still to be selected, to obtain
\begin{equation}\label{eq9}\bald
0&\,\geq -a\left(\beta^2\hr
+2\beta LC\hat g ^p  
+LC\hat g ^{2p-1}\right)\hat r
\\ &\,\ \quad+(1+\beta\hat r)^2H(\hat x,\Phi_x(\hat x,\hat y))-H(\hat y,-\Phi_y(\hat x,y))+\beta\hat r (2+\beta\hat r)\gamma u(\hat y).
\eald
\end{equation}
Recalling \eqref{eq3} and 
using \eqref{H1}, \eqref{H2}, and \eqref{H3}, we deduce that 
\[\bald
(1+&\beta\hat r)^2H(\hat x,\Phi_x(\hat x,\hat y))-H(\hat y,-\Phi_y(\hat x,y))
\\&\,=[(1+\beta\hat r)^2-1]H(\hat x,\Phi_x(\hat x,\hat y))
+H(\hat x,\Phi_x(\hat x,\hat y))-H(\hat y,\Phi_x(\hat x,\hat y))
\\ & \ \quad 
+H(\hat y,\Phi_x(\hat x,\hat y))-H(\hat y,-\Phi_y(\hat x,\hat y))
\\&\,\geq \beta(2+\beta\hat r)\hat r(b \hr^p-\Lambda_0) 
- \Lambda\hat r
-2bp\left(\hr+LC\hat g ^p\hat r\right)^{p-1}
LC\hat g ^p\hat r.
\eald\] 
Inserting the inequality above into \eqref{eq9} yields
\[\bald
0&\,\geq -a\left(\beta^2\hr
+2\beta LC\hat g ^p  
+LC\hat g ^{2p-1}\right)\hat r
+\beta(2+\beta\hat r)\hat r(b \hr^p-\Lambda_0+\gamma u(\y)) 
- \Lambda\hat r
\\ & \ \quad 
-2bp\left(\hr+LC\hat g ^p\hat r\right)^{p-1}
LC\hat g ^p\hat r.
\eald
\]
We assume henceforth by selecting $L$ large enough that 
\begin{equation} \label{L1}
L\geq \left(\frac{2(\Lambda_0+\gamma\|u\|_\infty)}{b} \right)^{\frac 1p},
\end{equation}
which, in particular, implies that 
$\,
b\hr^p-\Lambda_0 + \gamma u(\y)\geq \frac {b} 2\hr^p. \,$ 
Hence, 
\[\bald
0&\,\geq -a\left(\beta^2\hr
+2\beta LC\hat g ^p  
+LC\hat g ^{2p-1}\right)\hat r
+\beta \hat r b \hr^p 
- \Lambda\hat r
\\ & \ \quad 
-2bp\left(\hr+LC\hat g ^p\hat r\right)^{p-1}
LC\hat g ^p\hat r.
\eald
\]
Dividing this by $\hat r$, we get  
\begin{equation}\label{eq10}\bald
0&\,\geq -a\left(\beta^2\hr
+2\beta LC\hat g ^p  
+LC\hat g ^{2p-1}\right)
\\&\,\quad
+\beta b \hr^p 
- \Lambda
-2bp (\hr +LC\hat g ^p\hat r)^{p-1}
LC\hat g ^p.
\eald \end{equation}

Combining \eqref{eq3+}  
and 
\eqref{L1}, we infer that 
\[
b\hat\rho^p \hat r \leq a\hr+
\frac 12
bL^p\hat r
\leq a\hr+\frac 12 b\hr^p\hat r,
\]
and moreover, 
\[
\hr^p\hat r\leq \frac{2a}{b}\hr. 
\]
Since $L\hat g>\hat\rho$, we thus deduce that 
\beq\label{eq10+}
LC\hat g ^p\hat r
=L^{1-p}C(L\hat g )^p\hat r
\leq L^{1-p}C\hr^p\hat r
\leq L^{1-p}C\frac{2a}{b} \hr. 
\eeq
We assume that 
\begin{equation}\label{L2}
L\geq \left(\frac{2Ca}{b}\right)^{\frac{1}{p-1}}.
\end{equation}
Observe by this and \eqref{eq10+} that 
 $\,
 LC\hat g ^p\hat r\leq \hr
 \ $
 and 
\[\bald
2bp\left(\hr+LC\hat g ^p\hat r\right)^{p-1}
LC\hat g ^p
\leq 2bp(2\hr)^{p-1}
L^{1-p}C(L\hat g )^p
\leq 2^p bp L^{1-p}C\hr^{2p-1} 
\eald
\]
Now, from \eqref{eq10}, we obtain 
\[\bald
0&\,\geq -a\left(\beta^2\hr
+2\beta LC\hat g ^p  
+LC\hat g ^{2p-1}\right)
+\beta b \hr^p 
- \Lambda
-2^p bp L^{1-p}C\hr ^{2p-1}.
\eald
\]
Setting $\beta=\gamma \hr^{p-1}$, with $\gamma>0$,  and using the inequalities $1\leq \hat g  \leq L^{-1}\hr$, we find that 
\[\bald
0&\,\geq -a\left(\gamma^2\hr^{2p-1}
+2\gamma LC\hr^{p-1}\hat g ^p  
+LC\hat g ^{2p-1}\right)
\\&\,\quad
+\gamma b \hr^{2p-1} 
- \Lambda
-2^p bp L^{1-p}C\hr ^{2p-1}
\\&
\geq\hr^{2p-1}\left(\gamma b-a\left(\gamma^2+2\gamma CL^{1-p}+CL^{2(1-p)}\right)
-\Lambda L^{1-2p}-2^p bp CL^{1-p}\right).
\eald
\]
We select 
$\, 
\gamma=\frac{b}{2a},
\ $ 
to obtain 
\[\bald
0&\,
\geq \hr^{2p-1}\left(\frac{b^2}{4a} -b CL^{1-p}-aCL^{2(1-p)}
-\Lambda L^{1-2p}-2^p bp CL^{1-p}\right)
\\
&\,
\geq \hr^{2p-1}\left(\frac{b^2}{4a} -b CL^{1-p}-aCL^{1-p}
-\Lambda L^{1-p}-2^p bp CL^{1-p}\right)
\\
&\,
\geq \hr^{2p-1}\left(\frac{b^2}{4a} -L^{1-p}\left (b C+aC
+\Lambda+2^p bpC\right)\right).
\eald
\]
 
We assume that 
\begin{equation}\label{L3}
L^{p-1}\geq \frac{5a \left(b C+aC
+\Lambda+2^p bpC\right)}{b^2},
\end{equation}
which yields a contradiction:  
\beq\label{contra}
0\geq \hr^{2p-1} \Big( 
\frac{b^2}{4a} 
 - L^{1-p}\left(b C+aC
+\Lambda+2^p bpC\right)\Big)>0.
\eeq

In light of  \eqref{L1}, \eqref{L2}, and \eqref{L3}, we define 
\[
L_0:=\max\Big\{1,\left(\frac{2(\Lambda_0+\gamma\|u\|_\infty)}{b} \right)^{\frac{1}{p-1}},\left(\frac{2Ca}{b}\right)^{\frac 1p},
\left(\frac{5a \left(b C+aC
+\Lambda+2^p bpC\right)}{b^2}\right)^{\frac {1}{p-1}}
\Big\},
\]
and conclude that if $L\geq L_0$, then we arrive at 
\eqref{contra}, a contradiction,  starting with \eqref{sup>0}, which completes the proof.  
Note that $L_0$ depends only on $\gamma\|u\|_\infty, p, a, b, \gL_0, \gL, C$.
\end{proof}

\section{On the existence of radial solutions}\label{radial}

 Given $f\in C((0,R))$, with $R>0$, we first give a necessary condition for the existence of subsolutions to the equation 
\begin{equation}\label{4mareq1}
-\lambda_i(D^2u)+{|Du|}^p=f(|x|)\quad\;\text{in $B_R\backslash\left\{0\right\}$}.
\end{equation}
For later purpose, given $p>1$, let us introduce the function
\begin{equation}\label{9mareq4}
\gamma(r,z)=-\frac zr+|z|^p\quad\;\text{for $r>0$ and $z\in\mathbb R$}.
\end{equation}
Note that, for any fixed $r>0$, one has
\begin{equation}\label{9mareq5}
\min_{z\in\mathbb R}\gamma(r,z)=\gamma(r,z_{\min})=-\frac{p-1}{\left(pr\right)^{\frac{p}{p-1}}}
\end{equation}
where $z_{\min}=\left(\frac{1}{pr}\right)^{\frac{1}{p-1}}$.
\begin{pro}\label{4marprop1}
 Let $f\in C((0,R))$. If $u\in \USC(B_R\backslash\left\{0\right\})$ is a viscosity subsolution of \eqref{4mareq1} and $i<N$, then
\begin{equation}\label{4mareq2}
-\frac{p-1}{(pr)^{\frac{p}{p-1}}}\leq f(r) \ \ \text{for $r\in(0,R)$.}
\end{equation}
\end{pro}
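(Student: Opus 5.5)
The plan is to test the subsolution inequality against a radial barrier that blows up on both sides of a thin annulus, so that $u-\varphi$ must attain an interior maximum somewhere in it. The key observation is that for a radial $C^2$ function $\varphi(x)=\psi(|x|)$, the Hessian at $x\neq0$ has eigenvalues $\psi''(\rho)$ (radial direction, multiplicity one) and $\psi'(\rho)/\rho$ (tangential directions, multiplicity $N-1$), where $\rho=|x|$. Hence, if $\psi''\geq\psi'/\rho$ and $i\leq N-1$, then $\lambda_i(D^2\varphi(x))=\psi'(\rho)/\rho$. This gives
\[
-\lambda_i(D^2\varphi(x))+|D\varphi(x)|^p=\gamma(\rho,\psi'(\rho)),
\]
with $\gamma$ as in \eqref{9mareq4}. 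By \eqref{9mareq5}, $\gamma(\rho,z)\geq-\frac{p-1}{(p\rho)^{p/(p-1)}}$ for every slope $z$. So we never need to control the slope at the contact point; any contact point at all yields the desired inequality, up to moving from $\hat\rho$ to $r$.

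Concretely, fix $r\in(0,R)$ and $\delta>0$ with $2\delta\leq r$ and $r+\delta<R$. Set $t=\rho-r$ and
\[
\psi(\rho)=-\log\left(\delta^2-(\rho-r)^2\right),\qquad \rho\in(r-\delta,r+\delta).
\]
Then $\psi'=2t/(\delta^2-t^2)$ and $\psi''=2(\delta^2+t^2)/(\delta^2-t^2)^2$. The condition $\psi''\geq\psi'/\rho$ reduces to $\rho(\delta^2+t^2)\geq t(\delta^2-t^2)$. This holds because $\rho\geq r-\delta\geq\delta\geq|t|$ and $\delta^2+t^2\geq\delta^2-t^2$; for $t\leq0$ it is trivial.

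Since $u\in\USC$, it is bounded above on the compact annulus $\{r-\delta\leq|x|\leq r+\delta\}\subset B_R\setminus\{0\}$. Since $\psi\to+\infty$ as $|x|\to r\pm\delta$, the function $u-\varphi$ attains its maximum over the open annulus at some interior point $\hat x$, with $\hat\rho=|\hat x|\in(r-\delta,r+\delta)$. If $u\equiv-\infty$ there is nothing to test; I will assume $u\not\equiv-\infty$ on this annulus, as is implicit in the notion of subsolution. Since $\hat x\neq0$, $\varphi$ is $C^2$ near $\hat x$, and the subsolution property gives
\[
f(\hat\rho)\geq\gamma(\hat\rho,\psi'(\hat\rho))\geq-\frac{p-1}{(p\hat\rho)^{\frac{p}{p-1}}}.
\]

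Finally, let $\delta\to0$. Then $\hat\rho\to r$, and the continuity of $f$ on $(0,R)$ together with the continuity of the right-hand side in $\hat\rho$ yields \eqref{4mareq2}. The only genuinely delicate point is choosing the barrier so that it both forces an interior maximum and satisfies the ordering $\psi''\geq\psi'/\rho$ on the whole annulus; this ordering is exactly what makes $\lambda_i$ with $i<N$ pick out the tangential eigenvalue $\psi'/\rho$. The logarithmic barrier above handles both requirements, and the remaining steps are routine.
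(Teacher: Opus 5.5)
Your proof is correct, but it takes a different route from the paper's. The paper first radializes $u$. It sets $w(x)=\max_{|y|=|x|}u(y)$ and checks that $w$ is upper semicontinuous and, as a supremum of rotated subsolutions (Crandall--Ishii--Lions), again a subsolution. It then works with the profile $W$: at every $r_0$ where $W$ can be touched from above by a $C^2$ function $\Phi$, the lift $\Phi(|x|)$ touches $w$, and $\lambda_i\le \Phi'(r_0)/r_0$ for $i<N$ gives $f(r_0)\ge\min_z\gamma(r_0,z)$. Density of such $r_0$ and continuity of $f$ finish the argument.

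You skip the radialization and test the possibly non-radial $u$ directly with a radial barrier that blows up on both boundary spheres of a thin annulus. This forces an interior contact point whose radius lies within $\delta$ of $r$, and you then let $\delta\to0$. Both arguments rest on the same fact: the bound $\min_z\gamma(\rho,z)$ is independent of the slope, so only the radius of the contact point matters. Since $f$ is radial, $u$ itself never needs to be. Your version is shorter and self-contained, with no stability of suprema of subsolutions and no density of points with nonempty superjet. The paper's radialization, on the other hand, is a tool it reuses in Step 1 of the proof of Theorem \ref{noblowupint}.

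One simplification: the ordering $\psi''\ge\psi'/\rho$, which you call the delicate point, is not needed. For any radial $C^2$ function the tangential eigenvalue $\psi'/\rho$ has multiplicity $N-1$. Hence $\lambda_i(D^2\varphi)\le\psi'/\rho$ for every $i\le N-1$, whatever the sign of $\psi''-\psi'/\rho$; this is exactly \eqref{9mareq3}. That inequality already points the right way in the subsolution inequality. So any barrier blowing up at $\rho=r\pm\delta$ works, and $2\delta\le r$ can be relaxed to $\delta<r$.
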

\begin{proof}
Starting from $u$ and using the fact that the inequality \eqref{4mareq1} is invariant by rotations, we claim that there exists a radial subsolution of \eqref{4mareq1}. For this, since $u\in\USC(B_R\backslash\left\{0\right\})$, the radial function 
$$
w(x)=\max_{|y|=|x|}u(y)\,,\;\quad x\in B_R\backslash\left\{0\right\}
$$
is well defined and $w(x)<+\infty$ for any $x\in B_R\backslash\left\{0\right\}$. Moreover it is upper semicontinuous and thus it coincides with its upper semicontinuous envelope $w^*$. For this, consider any $x_0\in  B_R\backslash\left\{0\right\}$ and take any sequence $x_n\in  B_R\backslash\left\{0\right\}$  such that $x_n\to x_0$ as $n\to+\infty$. By definition of $w$, there exists a sequence $y_n\in  B_R\backslash\left\{0\right\}$ such that $w(x_n)=u(y_n)$ and $|y_n|=|x_n|$. We can extract a subsequence $y_{n_k}\in  B_R\backslash\left\{0\right\}$ such that 
\begin{equation}\label{9mareq1}
\limsup_{n\to+\infty}u(y_n)=\lim_{k\to+\infty}u(y_{n_k})
\end{equation}
and 
\begin{equation}\label{9mareq2}
\lim_{k\to+\infty}y_{n_k}=y_0,
\end{equation}
for some $y_0\in B_R\backslash\left\{0\right\}$ with $|y_0|=|x_0|$. Using \eqref{9mareq1}-\eqref{9mareq2} and $u\in \USC(B_R\backslash\left\{0\right\})$, we obtain
$$
\limsup_{n\to+\infty}w(x_n)=\lim_{k\to+\infty}u(y_{n_k})\leq u(y_0)\leq\max_{|y|=|x_0|}u(y)=w(x_0).
$$
To show that $w$ is a subsolution, we  observe  that the  function $w$ can be equivalently written as 
$$
w(x)=\max\left\{u(Ox)\,:\;\text{$O$ orthogonal matrix}\right\}
$$
and that $u(Ox)$ is still a subsolution \eqref{4mareq1}, for any orthogonal matrix $O$. Then, in view of \cite[Lemma 4.2]{CIL}, $w$ is in turn a radial subsolution of \eqref{4mareq1} as claimed.

\smallskip
Set $w(x)= W(|x|)$. For any $r_0\in(0,R)$ and  $\Phi\in C^2((0,R))$ such that $W-\Phi$ has a local maximum at $r_0$, we infer that the function $u(x)-\phi(x)$ with $\phi(x)=\Phi(|x|)$ attains a local maximum at any $x_0$ with $|x_0|=r_0$. Given that the eigenvalues of $D^2\phi(x_0)$ are $\frac{\Phi'(r_0)}{r_0}$ (with multiplicity, at least, $N-1$) and $\Phi''(r_0)$, then 
\begin{equation}\label{9mareq3}
\lambda_i(D^2\phi(x_0))\leq \frac{\Phi'(r_0)}{r_0}
\end{equation}
since $i<N$. By \eqref{4mareq1}-\eqref{9mareq4}-\eqref{9mareq5} and \eqref{9mareq3} we obtain
\begin{equation}\label{9mareq6}
f(r_0)\geq-\frac{\Phi'(r_0)}{r_0}+{|\Phi'(r_0)|}^p\geq\min_{z\in\mathbb R}\gamma(r_0,z)=-\frac{p-1}{(pr_0)^{\frac{p}{p-1}}}.
\end{equation}
Since the set $\left\{r_0\in(0,R)\,:\;J^{2,+}W(r_0)\neq\emptyset\right\}$ is dense in $(0,R)$, then using \eqref{9mareq6} and the continuity of $f$, we obtain the thesis \eqref{4mareq2}.
\end{proof}

\begin{rem}
\rm In the special case $f(x)=-C$, a negative constant function, Proposition \ref{4marprop1} yields a nonexistence result when $\displaystyle R>\frac{1}{p}{\left(\frac{p-1}{C}\right)}^\frac{p-1}{p}$, complementing the result obtained in \cite[Proposition 3.1]{BGR}.
\end{rem}

\begin{rem}
\rm
%
The result of Proposition \ref{4marprop1} can be easily generalized to subsolutions of equations involving
  weighted partial sums of the eigenvalues $\lambda_1,\ldots,\lambda_{N-1}$:
	\begin{equation}\label{12mareq1}
	-\sum_{i=1}^{N-1}a_i\lambda_i(D^2u)+{|Du|}^p=f(|x|)\quad\;\text{in $B_R\backslash\left\{0\right\}$},
	\end{equation}
 $a_i$ being nonnegative real numbers for any $i=1,\ldots,N-1$.  Accordingly, condition \eqref{4mareq2} is replaced by the more general
\begin{equation}\label{12mareq2}
-(p-1){\left(\frac{\sum_{i=1}^{N-1}a_i}{pr}\right)}^{\frac{p}{p-1}}\leq f(r) \ \ \text{for $r\in(0,R)$.}
\end{equation}
\end{rem}

\medskip
 
The next two Propositions \ref{4marprop2} and \ref{propLN} establish the existence of $C^2$-radial solutions. The former treats the case $i\leq N-1$ and the latter the case $i=N$ in equation \eqref{4mareq1}

\begin{pro}\label{4marprop2}
Assume $f\in C^1([0,R])$ and 
\begin{equation}\label{9mareq11}
-\frac{p-1}{(pr)^{\frac{p}{p-1}}}< f(r) \ \ \text{for $r\in(0,R)$.}
\end{equation}
Then:
\begin{enumerate}
	\item[(a)] if $1<i<N$, there exists {a radial solution $u\in C^2(\overline B_R)$} of 
	\begin{equation}\label{9mareq7}
	-\lambda_i(D^2u)+|Du|^p=f(|x|)\;\quad\text{in $B_R$};
	\end{equation}
	\item[(b)] if $i=1$ and we further assume that $$f(r)\leq0,\;f'(r)\leq0\quad\text{in $[0,R]$},$$
	then there exists a radially nondecreasing solution $u\in C^2(\overline B_R)$ of \eqref{9mareq7};
	\item[(c)] if $i=1$ and we further assume that $f$ is a nonnegative constant function,
	then there exists a radially nonincreasing solution $u\in C^2(\overline B_R)$ of \eqref{9mareq7}.
\end{enumerate}
\end{pro}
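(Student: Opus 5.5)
The plan is to reduce everything to a first-order scalar equation for $z=U'(r)$, using the structure of the Hessian of a radial function. If $u(x)=U(|x|)$ is $C^2$, then the eigenvalues of $D^2u(x)$ for $x\neq0$ are $U'(r)/r$, with multiplicity at least $N-1$, and $U''(r)$.

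\emph{Identifying $\lambda_i$.} For $1<i<N$, in either ordering of $U''$ and $U'/r$ the $i$-th eigenvalue is $U'/r$. So (a) reduces to finding $z=U'$ with
\[
\gamma(r,z(r))=-\frac{z(r)}{r}+|z(r)|^p=f(r),
\]
with $\gamma$ as in \eqref{9mareq4}. For $i=1$ we have $\lambda_1=\min\{U'',U'/r\}$. In (b) and (c) I will therefore look for a solution of the same first-order equation that additionally satisfies $U''\geq U'/r$, i.e.\ $r\mapsto z(r)/r$ nondecreasing, so that again $\lambda_1=U'/r$. In all cases I then set $U(r)=\int_0^r z$.

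\emph{Regularity at the origin.} I write $z=rw$, so that the equation becomes
\[
w=r^{p}|w|^p-f(r).
\]
Near $r=0$, the implicit function theorem at $(r,w)=(0,-f(0))$ gives a unique $C^1$ solution $w(r)$; here I use $p>1$, so $r^p|w|^p$ is $C^1$, and $f\in C^1$. Then $Du(x)=w(|x|)x$ and
\[
D^2u(x)=w(|x|)I_N+|x|\,w'(|x|)\,\hat x\otimes\hat x,\qquad \hat x=x/|x|.
\]
This is continuous up to $x=0$ with $D^2u(0)=w(0)I_N$, so $u\in C^2$ near $0$ and the equation holds at $0$ by continuity.

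\emph{Continuation on $(0,R)$.} Since $z\approx -rf(0)$ is small while $z_{\min}(r)=(pr)^{-1/(p-1)}\to\infty$, this root lies on the branch $z<z_{\min}(r)$, where $\partial_z\gamma=-1/r+p|z|^{p-2}z<0$. Hypothesis \eqref{9mareq11} says $f(r)>\min_z\gamma(r,\cdot)$, so the smaller root $z_-(r)$ of $\gamma(r,\cdot)=f(r)$ is a nondegenerate zero. The implicit function theorem therefore continues $z_-$ as a $C^1$ function on all of $(0,R)$, with no branch collision.

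The \textbf{main obstacle} is the endpoint $r=R$, since \eqref{9mareq11} is only assumed on the open interval. If equality held at $R$, then $\partial_z\gamma\to0$ there and $z'$ could blow up. My first attempt is to use $f\in C^1([0,R])$ to bound $z$ and control $z'$ up to $R$. Failing that, I would note that strict inequality at $R$ (or working on $B_{R'}$ with $R'<R$) is what is really used, and handle $\overline B_R$ by continuity.

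\emph{Monotonicity for $i=1$.} Differentiating $w=r^p|w|^p-f$ gives
\[
w'\bigl(1-p\,r^p|w|^{p-2}w\bigr)=p\,r^{p-1}|w|^p-f'(r).
\]
In (b), $f\le0$ gives $w\ge0$, hence $z\ge0$ and $U$ is nondecreasing. The bracket equals $-r\,\partial_z\gamma>0$ on the chosen branch, and $f'\le0$ makes the right-hand side nonnegative. So $w'\ge0$, which gives $U''\ge U'/r$ and hence $\lambda_1=U'/r$.

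In (c), with $f\equiv c\ge0$, I take the root $w\le0$. The map $w\mapsto w-r^p|w|^p+c$ is strictly increasing on $(-\infty,0]$ and runs from $-\infty$ to $c\ge0$. So this root exists globally on $[0,R]$ with no degeneracy, and the boundary issue of (a) disappears. Here the bracket is $1+pr^p|w|^{p-1}>0$ and the right-hand side is $pr^{p-1}|w|^p\ge0$, so again $w'\ge0$ and $\lambda_1=U'/r$. Moreover $z=rw\le0$, so $U$ is nonincreasing. The case $c=0$ gives simply $u\equiv0$.
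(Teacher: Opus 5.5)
Your proposal is correct, and its main lines match the paper's proof: the eigenvalue structure reduces the equation to $\gamma(r,U')=f$; you take the root on the decreasing branch $z<z_{\min}(r)$ and continue it by the implicit function theorem. For $i=1$ you differentiate the identity to get $U''\ge U'/r$, and your $w'\ge0$ is the paper's inequality $s/r\le s'$ from \eqref{9mareq10bis}, written in the variable $w$.

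The one genuinely different step is at the origin. The paper squeezes $-rf(r)\le s(r)<r^\alpha$ to get $s(0^+)=0$, then extracts $s(r)=-f(0)r+o(r)$ and $s'(r)\to-f(0)$ from the equation by hand. Your substitution $w=U'/r$ removes the singularity, since $\partial_w=1$ at $(0,-f(0))$. The implicit function theorem then gives $w\in C^1$ near $0$ at once. The formula $D^2u=wI_N+|x|w'\,\hat x\otimes\hat x$ makes continuity of $D^2u$ at $0$ explicit; the paper leaves the needed fact $s(r)/r\to s'(0)$ implicit. In (c) you also obtain the root globally by monotonicity, without using \eqref{9mareq11}.

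On the endpoint $r=R$, your first attempt (controlling $z'$ up to $R$ via $f\in C^1([0,R])$) cannot succeed, because the literal claim is false there. Take $p=2$ and $f\equiv-\frac{1}{4R^2}$. Then \eqref{9mareq11} holds on $(0,R)$, and so do $f\le0$ and $f'\le0$, but equality holds at $r=R$. The root on the decreasing branch is $U'(r)=\frac{1}{2r}\bigl(1-\sqrt{1-r^2/R^2}\bigr)$, and $U''(r)\to+\infty$ as $r\to R^-$.

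For $1<i<N$ every radial $C^2$ solution must have $U'$ equal to this root. By connectedness, $U'$ stays on one of the two distinct roots on $(0,R)$, and the larger root exceeds $z_{\min}(r)\to\infty$ as $r\to0^+$, which is incompatible with $U'(0)=0$. So no radial solution in $C^2(\overline B_R)$ exists. The statement therefore needs \eqref{9mareq11} at $r=R$ as well, or its conclusion should be weakened to $u\in C^2(B_R)\cap C^1(\overline B_R)$; continuity only gives the latter.

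The paper's proof makes the same tacit assumption when it asserts $s(r)<z_{\min}$ and $\gamma_z(r,s(r))<0$ for all $r\in(0,R]$. Your fallback, strict inequality at $R$ or working on $B_{R'}$ with $R'<R$, is the right fix. Case (c) is unaffected.
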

\begin{proof}
Using the notations  \eqref{9mareq4}-\eqref{9mareq5}, given any $r\in(0,R]$, there exists a unique real number $s(r)$ such that 
\begin{equation}\label{10mareq2}
-r f(r)\leq s(r)<z_{\min}=\left(\frac 1{pr}\right)^{\frac 1{p-1}}
\end{equation}
 and 
\begin{equation}\label{9mareq9}
\gamma(r,s(r))=f(r).
\end{equation}
Since $\gamma_z(r,z)<0$ for $z<z_{\min}$, by the implicit function theorem we infer that $s\in C^1((0,R])$ and 
$$
\gamma_z(r,s(r))<0\quad\,\text{for $r\in(0,R]$}.
$$
Moreover, for any fixed $\alpha\in(0,1)$, we have
$$
\lim_{r\to 0^+}\gamma(r,r^\alpha)=\lim_{r\to 0^+}\left(-\frac{1}{r^{1-\alpha}}+r^{\alpha p}\right)=-\infty.
$$
Thus,  for sufficiently small $r$, it turns out that $\displaystyle\gamma(r,r^\alpha)<\min_{[0,R]}f$ and 
$$
-r f(r)\leq s(r)<r^\alpha.
$$
It follows that
\begin{equation}\label{9mareq8}
\lim_{r\to 0^+}s(r)=0.
\end{equation}
Differentiating $\gamma(r,s(r))=f(r)$ yields
\begin{equation*}
\gamma_r(r,s(r))+\gamma_z(r,s(r))s'(r)=f'(r),
\end{equation*}
which reads
\begin{equation}\label{9mareq10bis}
\frac{s(r)}{r}=r\big(f'(r) -\gamma_z(r,s(r))s'(r)\big) 
=rf'(r)+\big(1-rp|s(r)|^{p-2}s(r))\big)s'(r). 
\end{equation}
Using \eqref{9mareq9}-\eqref{9mareq8}-\eqref{9mareq10bis} we obtain 
\[
s(r)=r(-f(r)+|s(r)|^p)=-f(0)r+o(r) \ \ \text{ as \ } r\to 0^+,
\]
and
\begin{equation*}
\begin{split}
s'(r)&=\frac{\frac{s(r)}{r}-rf'(r)}{1-r p|s(r)|^{p-2}s(r)}\\
&=\frac{-f(0)+o(1)-rf'(r)}{1-r p|s(r)|^{p-2}s(r)}=-f(0)+o(1) \ \ \text{ as }r\to 0^+.
\end{split}
\end{equation*}
Then $s\in C^1\left([0,R]\right)$ and $s'(0)=-f(0)$. Define 
\begin{equation*}
U(r)=\int_0^r s(t)dt \ \ \FOR r\in[0,R]
\end{equation*}
and 
\begin{equation*}
 u(x)=U(|x|).
\end{equation*}
Then $U\in C^2([0,R])$ and   
\[
 \gamma(r,U'(r))=f(r)\ \ \FOR r\in (0,R]. 
\]
Moreover, since $U'(0)=s(0)=0$ and $U''(0)=s'(0)=-f(0)$, we find that 
\begin{equation}\label{10mareq2*}
D^2u(0)=-f(0)I_N
\end{equation}
and that $u\in C^2(\overline B_R)$.

\medskip
\noindent
Case (a).  We have $$\lambda_i(D^2u(x))=\frac{U'(|x|)}{|x|}\ \ \FOR x\in B_R\backslash\left\{0\right\}$$
and $$
-\lambda_i(D^2u)+|Du|^p=f(|x|) \ \ \IN B_R\backslash\left\{0\right\}.
$$
 Using \eqref{10mareq2*}, we also have
\[
-\lambda_i(D^2u(0))+|Du(0)|^p=f(0).
\]  
Hence, $u$ is a classical solution of \eqref{9mareq7}.

\medskip
\noindent
Case (b). Using \eqref{10mareq2} and the assumption $f(r)\leq0$, we infer that  $0\leq s(r)<z_{\min}$. In view of \eqref{9mareq10bis}, by the assumption $f'(r)\leq0$, we also have
\[
0\leq \frac{s(r)}{r}\leq -\underbrace{r\gamma_z(r,s(r))}_{<0}s'(r),
\]
and $s'(r)\geq 0$.  Moreover, using again \eqref{9mareq10bis}, we obtain 
\begin{equation}\label{f=constant}
\frac{s(r)}{r}\leq s'(r).
\end{equation}
This means that $0\leq\frac{U'(r)}{r}\leq U''(r)$ for any $r\in(0,R]$. From this and \eqref{10mareq2*} we obtain that  $u(x)=U(|x|)$ is a classical solution of 
$$
-\lambda_1(D^2u)+|Du|^p=f(|x|) \ \ \IN B_R.
$$

\medskip
\noindent
Case (c). Suppose $f=C$ with $C\geq0$. Then it turns out that $-Cr\leq s(r)\leq0$. By \eqref{9mareq10bis} we have
$$
0\geq\frac{s(r)}{r}=\big(1-rp|s(r)|^{p-2}s(r))\big)s'(r).
$$
From this inequality we deduce that  $s'(r)\leq0$ and that condition \eqref{f=constant} still holds. Thus we conclude as in the previous case. 
\end{proof}
\begin{rem}
\rm If $f$ is merely a continuous function satisfying \eqref{9mareq11}, we can choose a function $g\in C^1([0,R])$ such that 
$$
-\frac{p-1}{(pr)^{\frac{p}{p-1}}}< g(r)\leq f(r) \ \ \text{for $r\in(0,R)$.}
$$
Then Proposition \ref{4marprop2}, with $f$ replaced by $g$, yields the existence of classical subsolutions to $$-\lambda_i(D^2u)+|Du|^p=f(|x|) \,\quad\text{in $B_R$}$$ whenever $1<i\leq N$ and also for $i=1$ if $g$ can be {chosen} in such a way conditions (b) or (c) of Proposition \ref{4marprop2} are satified. As for the supersolutions, we observe that if $f\in C(B_R)$, not necessarily radial, and bounded from above, then the function $$u(x)=-\frac{\left\|f^+\right\|_\infty}{2}|x|^2$$
satisfies  {$D^2u(x)=-\left\|f^+\right\|_\infty I_N$} for any $x\in\overline B_R$. Hence, for any $1\leq i\leq N$, it is a classical supersolution of $$-\lambda_i(D^2u(x))+{|Du(x)|}^p\geq-\lambda_i(D^2u(x))\geq f(x) \,\quad\text{in $B_R$.}$$
\end{rem}

\begin{rem}
\rm The results of Proposition \ref{4marprop2} can be extended to the equation
\begin{equation}\label{29giu26seq1}
-\sum_{i=1}^{N-1}a_i\lambda_i(D^2u)+{|Du|}^p=f(|x|)\quad\;
	\text{in $B_R$}
\end{equation}

Precisely, assuming that the strictly inequality holds in \eqref{12mareq2}, then:
\begin{enumerate}
	\item[(a')] if $a_1=0$, there exists a $C^2$-radial solution of \eqref{29giu26seq1};
	\item[(b')] if $a_1=0$ there exists a $C^2$-radial solution of \eqref{29giu26seq1} if either $f(r)\leq0,\;f'(r)\leq0$ in $[0,R]$  or if $f$ is a nonnegative constant function.
	\end{enumerate}
	Details are left to the interested reader.
\end{rem}

\begin{pro}\label{propLN}
Let $f\in C^1([0,R])$ be a nonincreasing function such that
\begin{equation}\label{11mareq1}
-{\left(\frac1R\int_{0}^{+\infty}\frac{dt}{1+t^p}\right)}^\frac{p}{p-1}\leq f(r)\leq0\,,\;\quad \forall r\in[0,R].
\end{equation} 
Then there exists $u\in C^2(B_R)$, a  radial solution of 
\begin{equation}\label{11mareq2}
-\lambda_N(D^2u)+|Du|^p=f(|x|)\quad\;\text{in $B_R$}.
\end{equation}
Furthermore $u$ is radially nondecreasing and convex.
\end{pro}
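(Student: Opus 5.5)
We look for a radial solution $u(x)=U(|x|)$ with $U'\ge 0$ and $U''\ge U'/r$, so that $\lambda_N(D^2u)=U''$ and \eqref{11mareq2} reduces to the ODE
\[
U''(r)=|U'(r)|^p-f(r),\qquad U'(0)=0,\ U(0)=0 .
\]
Set $s=U'$. Then $s'=s^p-f$ with $s(0)=0$, where we write $s^p$ because $s\ge0$ along the solution.

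\textbf{Sign and monotonicity of $s$.} Since $f\le0$, we get $s'\ge s^p\ge0$. Hence $s$ is nondecreasing and nonnegative, and it is $C^1$ as long as it stays finite. So $U\in C^2$ and is radially nondecreasing.

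\textbf{Global existence on $[0,R)$.} This is the main step and it uses the lower bound in \eqref{11mareq1}. Put $M:=\|f^-\|_\infty=-\min f$, so $s'\le s^p+M$. Write $s=M^{1/p}\,\sigma$. Then
\[
\frac{d\sigma}{dr}\le M^{\frac{p-1}{p}}(1+\sigma^p),
\]
and integrating from $0$ gives
\[
\int_0^{\sigma(r)}\frac{dt}{1+t^p}\le M^{\frac{p-1}{p}}\,r .
\]
Hypothesis \eqref{11mareq1} says precisely that $M^{\frac{p-1}{p}}R\le\int_0^\infty\frac{dt}{1+t^p}$. Therefore, for every $r<R$, the left integral is strictly less than the full integral, so $\sigma(r)<\infty$. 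Thus $s$ does not blow up before $R$, and we obtain $U\in C^2([0,R))$. In the equality case blow-up may occur exactly at $r=R$, which is why the statement only asserts $u\in C^2(B_R)$ and not up to the boundary.

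\textbf{The eigenvalue ordering $U''\ge U'/r$.} This is where monotonicity of $f$ enters. Since $s(0)=0$, we have $s(r)=\int_0^r s'(t)\,dt$. From $s'=s^p-f$, with $s$ nondecreasing, $s\ge0$, and $-f$ nondecreasing (because $f$ is nonincreasing), $s'$ is nondecreasing. Hence $s(r)\le r\,s'(r)$, that is, $U'/r\le U''$. So the largest eigenvalue of $D^2u$ is $U''$ for $r>0$, and $u$ is convex since both eigenvalues $U'/r$ and $U''$ are nonnegative.

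\textbf{Behaviour at the origin.} We have $U'(0)=0$ and $U''(0)=-f(0)$. Moreover $U'(r)/r\to s'(0)=-f(0)$ as $r\to0$, so $D^2u(x)\to -f(0)I_N$ and $u\in C^2$ near $0$ with $D^2u(0)=-f(0)I_N$. This mirrors the argument in Proposition~\ref{4marprop2}. The equation then holds at $0$ as well, because $\lambda_N(D^2u(0))=-f(0)$ and $Du(0)=0$.

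\textbf{Local existence.} Local existence and uniqueness for the ODE is standard, since $f\in C^1$ and $z\mapsto|z|^p$ is locally Lipschitz for $p>1$.

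The only genuinely delicate point is the comparison argument giving non-blow-up exactly under the threshold \eqref{11mareq1}.
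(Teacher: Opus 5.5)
Your proof is correct and follows essentially the paper's route. You use the same ODE $s'=s^p-f$, $s(0)=0$, and rule out blow-up before $R$ by comparison with the constant-coefficient equation $w'=w^p+\|f\|_\infty$; the only difference is that you integrate the differential inequality directly instead of writing $w$ explicitly. You then get $U''\ge U'/r$ from the monotonicity of $s'$, which is exactly what the paper's auxiliary function $h=rs'-s$, with $h'=rs''\ge 0$, encodes. The one thing to add is the trivial case $f\equiv 0$ (i.e.\ $M=0$), where your rescaling $s=M^{1/p}\sigma$ is undefined but $s\equiv 0$ directly by uniqueness.
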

\begin{proof}
If $f(r)=0$ for any $r\in[0,R]$, then any constant function is a trivial solution of \eqref{11mareq2}. Suppose now that $f$ is not constant. In view of \eqref{11mareq1} we have
\begin{equation}\label{11mareq3}
0<\left\|f\right\|_\infty\leq{\left(\frac1R\int_{0}^{+\infty}\frac{dt}{1+t^p}\right)}^\frac{p}{p-1}.
\end{equation}
Consider the initial value problem
\begin{equation*}
\left\{
\begin{array}{c}
v'(r)=|v(r)|^p-f(r) \\
v(0)=0.
\end{array}\right.
\end{equation*}
Since $p>1$, such a problem admits a unique solution $v\in C^1([0,\rho))$, in fact $v\in C^2([0,\rho))$,  defined on the maximal interval $[0,\rho)$ with $\rho\leq R$. Since $f(r)\leq0$, then $v'(r)\geq0$ for any $r\in[0,\rho)$. By the initial condition $v(0)=0$, we obtain that $v(r)\geq0$ and 
\begin{equation}\label{11mareq4}
v'(r)={\left(v(r)\right)}^p-f(r)\,, \quad\; r\in[0,\rho).
\end{equation}
We claim that $\rho=R$. If not, since $v$ is nondecreasing, we would have 
\begin{equation}\label{11mareq4bis}
\lim_{r\to\rho^-}v(r)=+\infty.
\end{equation}
To get a contradiction, we compare $v$ with the solution $w$ of the initial value problem
\begin{equation*}
\left\{\begin{array}{c}
w'(r)=|w(r)|^p+\left\|f\right\|_\infty \\
w(0)=0.
\end{array}\right.
\end{equation*}
Since $\left\|f\right\|_\infty>0$, the function $w$ is increasing. A direct integration yields $w$ implicitly defined by the equation
\begin{equation}\label{11mareq5}
\frac{1}{{\left\|f\right\|_\infty}^\frac{p-1}{p}}\int_{0}^{\frac{w(r)}{\left\|f\right\|_\infty^{1/p}}}\frac{dt}{1+t^p}=r.
\end{equation}
Consider the function
$$
F(\tau)=\int_0^\tau\frac{dt}{1+t^p}\,,\quad\tau\geq0.
$$
Note that $F$ is smooth in $[0,+\infty)$, increasing  and $F(0)=0$. Its inverse $F^{-1}$ is defined in the interval $[0,F(\infty))$, with $F(\infty)=\int_0^{+\infty}\frac{dt}{1+t^p}$.  
Using \eqref{11mareq3}, for any $r<R$ we have
$$
\left\|f\right\|_\infty^\frac{p-1}{p}r<\left\|f\right\|_\infty^\frac{p-1}{p}R\leq F(\infty).
$$
From this and using \eqref{11mareq5} we infer that 
$$
w(r)=\left\|f\right\|_\infty^\frac1pF^{-1}\left({\left\|f\right\|_\infty}^\frac{p-1}{p}r\right)\,,\;\quad r\in[0,R).
$$
By comparison,  we obtain that $v(r)\leq w(r)$ for any $r\in[0,\rho)$. Since $\rho<R$ and $w$ is increasing, then $v(r)<w(\rho)<+\infty$, in contradiction to \eqref{11mareq4bis}.

\smallskip
Thus the function $v$ is solution of \eqref{11mareq4} in $[0,R)$. We now claim that 
\begin{equation}\label{11marpomeq1}
\frac{v(r)}{r}\leq v'(r)\quad\; \forall r\in(0,R).
\end{equation} 
It is easy to see that \eqref{11marpomeq1} is equivalent to $h(r)\geq0$, where 
$$
h(r)=r({(v(r))}^p-f(r))-v(r).
$$
Using the assumption $f'(r)\leq0$ in $(0,R)$ and \eqref{11mareq4}, we get
$$
h'(r)=r\left(p{(v(r))}^{p-1}v'(r)-f'(r)\right)\geq0\,,\quad\,r\in(0,R).
$$
It follows that $h(r)\geq h(0)=0$ as claimed.

\smallskip
Define $U\in C^2([0,R))$ by
$$
U(r)=\int_0^rv(s)\,ds\quad\,\text{for $r\in[0,R)$}
$$
and $u(x)=U(|x|)$. By \eqref{11mareq4} and \eqref{11marpomeq1}, $U$ satisfies
\begin{equation*}
\left\{\begin{array}{cl}
-U''(r)+{\left(U'(r)\right)}^p=f(r) & \text{for $r\in[0,R)$}\\
U'(0)=0 &\,\\
U''(r)\geq\frac{U'(r)}{r}>0 & \text{for $r\in(0,R)$}.
\end{array}\right.
\end{equation*}
Since $U'(0)=0$, we have $u\in C^2(B_R)$. Moreover $\lambda_N(D^2u(x))=U''(|x|)$  for $|x|\neq0$ and  $D^2u(0)=\lambda_N(0)=-f(0)I_N$.
Then $u$ is a classical solution of  \eqref{11mareq2}.
\end{proof}

We conclude this section by establishing the equivalence between the PDE and ODE formulations of viscosity radial sub- and supersolutions.

\smallskip
For $(r,x,y)\in(0,+\infty)\times\mathbb R\times\mathbb R$, define
\[
\bar\lambda_i(r,x,y)=\begin{cases}
\min\{x,y/r\} & \text{if $i=1$},\\
y/r &  \text{if $1<i<N$},\\
\max\{x,y/r\}& \text{if $i=N$}. 
\end{cases}
\]
\begin{pro}\label{eqv-rad-gen}
Let $U\in \USC((0,R))$ and let $f\in C((0,R))$. Set $u(x)=U(|x|)$ for $x\in B_R\backslash\left\{0\right\}$. 
Assume that $U$ is locally bounded in $(0,R)$. 
Then, $u$ is a viscosity subsolution of
\begin{equation}\label{N-d}
-\lambda_i(D^2u)+|Du|^p=f(|x|) \ \ \text{ in $ B_R\backslash\left\{0\right\}$}
\end{equation}
if and ony if $U$ is a viscosity subsolution of
\begin{equation}\label{1-d}
-\bar\lambda_i(r,U'',U')+|U'|^p=f(r) \ \ \text{ in $(0,R)$.}
\end{equation}
\end{pro}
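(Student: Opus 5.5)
The plan is to prove the two implications separately, working with test functions and, where useful, with (closed) semijets.

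\emph{PDE $\Rightarrow$ ODE.} Let $\Phi\in C^2$ near $r_0\in(0,R)$ with $U-\Phi$ having a local maximum at $r_0$. Then $u-\phi$, with $\phi(x)=\Phi(|x|)$, has a local maximum at every $x_0$ with $|x_0|=r_0$, and $\phi$ is $C^2$ near $x_0$ because $r_0>0$. The eigenvalues of $D^2\phi(x_0)$ are $\Phi''(r_0)$ (direction $x_0/r_0$) and $\Phi'(r_0)/r_0$ with multiplicity $N-1$. Hence $\lambda_i(D^2\phi(x_0))$ equals $\min\{\Phi'',\Phi'/r\}$ if $i=1$, $\Phi'/r$ if $1<i<N$ (since $N\ge3$ in this range, the value $\Phi'/r$ occupies all positions $2,\dots,N-1$), and $\max\{\Phi'',\Phi'/r\}$ if $i=N$. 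That is exactly $\bar\lambda_i(r_0,\Phi'',\Phi')$. Together with $|D\phi(x_0)|=|\Phi'(r_0)|$, the subsolution inequality for $u$ gives the one for $U$. This direction is routine.

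\emph{ODE $\Rightarrow$ PDE.} This is the main obstacle, because a general test function $\phi$ touching $u$ from above at $x_0$ is not radial. Let $u-\phi$ have a strict local maximum at $x_0\ne0$, write $r_0=|x_0|$ and $e=x_0/r_0$, and set $q=D\phi(x_0)$, $X=D^2\phi(x_0)$. I would argue as follows.

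\textbf{Step 1: tangential part of the gradient.} Restricting to the sphere $|x|=r_0$, on which $u$ is constant, the map $x\mapsto -\phi(x)$ attains a local maximum on the sphere at $x_0$. Hence $q=(q\cdot e)e$, i.e.\ $q$ is radial. Set $\Phi(r)=\phi(re)$. Then $U-\Phi$ has a local maximum at $r_0$, with $\Phi'(r_0)=q\cdot e$ and $\Phi''(r_0)=Xe\cdot e$.

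\textbf{Step 2: tangential second-order information.} For a unit tangent vector $\tau\perp e$, consider the curve $x(\theta)=r_0(\cos\theta\, e+\sin\theta\,\tau)$. Since $u$ is constant along it, $\theta\mapsto\phi(x(\theta))$ has a local minimum at $0$. Differentiating twice gives $r_0^2X\tau\cdot\tau-r_0\,q\cdot e\ge0$, that is, $X\tau\cdot\tau\ge\Phi'(r_0)/r_0$ on $e^\perp$.

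\textbf{Step 3: getting the matrix comparison.} I would like to deduce $X\ge \mathrm{diag}$ (radial entry $\Phi''$, tangential entries $\Phi'/r_0$) in a suitable sense, but the mixed entries $Xe\cdot\tau$ obstruct a direct comparison. To handle this, I would reduce to radial test functions. Replace $\phi$ by $\psi(x)=\max_{|y|=|x|}\phi$, or rather $\min_{|y|=|x|}$, with care about the sign. Because $u$ is radial, $u\le \min_{O}\phi(Ox)$ near the sphere after normalization, but this is not $C^2$. A cleaner route is to use the equivalent jet formulation with $(q,X+\epsilon I)$ and apply the ODE test with $\Phi_\epsilon(r)=\Phi(r)+$ a small quadratic correction. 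Alternatively, one can use the known fact that $J^{2,+}u(x_0)$ for radial $u$ consists of $(p e, Y)$ with $Y\ge \mathrm{diag}(a,\,p/r_0)$-type bounds coming from $(p,a)\in J^{2,+}U(r_0)$. Concretely, I would prove directly the following: if $(q,X)\in J^{2,+}u(x_0)$, then $q=pe$, and for every $\epsilon>0$ there is $a\le Xe\cdot e+C\epsilon^{-1}|X_{e\tau}|^2+\epsilon$ with $(p,a)\in J^{2,+}U(r_0)$. The argument uses the expansion $u(x)\le\phi$ along rotated points and Cauchy--Schwarz on the cross terms. This matters for $i=1$ and $i=N$, where $\Phi''$ enters.

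\textbf{Step 4: concluding by monotonicity.} By degenerate ellipticity, $\lambda_i(X)\ge\lambda_i$ of the block matrix with tangential block $\ge \Phi'/r_0$, up to the cross-term error. Then $-\lambda_i(X)+|q|^p\le -\bar\lambda_i(r_0,a,p)+|p|^p+o(1)\le f(r_0)+o(1)$. For $1<i<N$ the radial entry does not matter: by Cauchy interlacing, the restriction of $X$ to $e^\perp$ (which is $\ge p/r_0$) gives $\lambda_i(X)\ge\lambda_{i-1}(X|_{e^\perp})\ge p/r_0$, so Step 2 alone suffices. For $i=N$, $\lambda_N(X)\ge\max\{Xe\cdot e,\,\lambda_{\max}(X|_{e^\perp})\}$, again directly. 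The delicate case is $i=1$, where Step 3 is needed. There I would use the semijet correction: when the tangential entries are $\ge p/r_0$ and $Xe\cdot e\ge a$, the minimum eigenvalue is controlled by a perturbation $\Phi+\frac{K}{2}(r-r_0)^2$ trick that absorbs the off-diagonal terms.
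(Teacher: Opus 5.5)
Your ``only if'' direction is fine, and so is the ``if'' direction for $1<i\le N$. With $s=q\cdot e=\Phi'(r_0)$, Steps 1--2 give $q=se$ and $X\tau\cdot\tau\ge s/r_0$ on $e^\perp$. Interlacing, together with $\lambda_N(X)\ge Xe\cdot e$, then yields $\lambda_i(X)\ge\bar\lambda_i(r_0,\Phi''(r_0),\Phi'(r_0))$ for the ray test function $\Phi(r)=\phi(re)$. This part is more elementary than the paper's argument.

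The gap is $i=1$. Let $w$ be the component of $Xe$ orthogonal to $e$; it can push $\lambda_1(X)$ strictly below $\min\{Xe\cdot e,s/r_0\}$. For instance, take $N=2$, $u=r_0|x|$ and $x_0=r_0e_1$. The quadratic $\phi$ with $D\phi(x_0)=r_0e_1$ and $D^2\phi(x_0)=X=\begin{pmatrix}1&1\\1&3\end{pmatrix}$ touches $u$ strictly from above at $x_0$, since $X-D^2u(x_0)>0$. Yet $\lambda_1(X)=2-\sqrt2<1=\min\{Xe\cdot e,s/r_0\}$. So you need some $(s,a)\in J^{2,+}U(r_0)$ with $a$ \emph{below} $Xe\cdot e$ by an amount depending on $w$.

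Your Step 3 statement points the wrong way. ``There is $a\le Xe\cdot e+C\epsilon^{-1}|w|^2+\epsilon$'' is already satisfied by $a=Xe\cdot e$ from the ray, so it carries no new information. Likewise, the correction $\Phi+\frac K2(r-r_0)^2$ in Step 4 only raises the radial second derivative. Since $-\min\{a,s/r_0\}$ is nonincreasing in $a$, this weakens the ODE inequality and cannot absorb the off-diagonal terms.

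The missing idea is to restrict the test function to a curve on the spheres with a tangential drift, rather than to the ray. Set:
\begin{itemize}
\item $\phi_\epsilon=\phi+\frac\epsilon2|x-x_0|^2$;
\item $B_\epsilon=X|_{e^\perp}-\frac{s}{r_0}I+\epsilon I$, which is positive definite by Step 2;
\item $x(r)=r\,\omega(r)$, with $\omega(r)=\frac{e+(r-r_0)v}{|e+(r-r_0)v|}$ and $v=-r_0^{-1}B_\epsilon^{-1}w$.
\end{itemize}
Since $u(x(r))=U(r)$, the function $\Phi(r)=\phi_\epsilon(x(r))$ is a $C^2$ test function for $U$ at $r_0$, with $\Phi'(r_0)=s$. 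Because $q\cdot x''(r_0)=-sr_0|v|^2$, one gets $\Phi''(r_0)=Xe\cdot e+\epsilon-w\cdot B_\epsilon^{-1}w=:a_\epsilon$. The Schur complement criterion gives $X+\epsilon I\ge\min\{a_\epsilon,s/r_0\}I$. The ODE inequality at $(s,a_\epsilon)$ then yields $-\lambda_1(X)+|q|^p\le f(r_0)+\epsilon$. With this repair your route becomes a complete and more elementary alternative; it does not even use local boundedness of $U$.

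The paper avoids the cross terms differently. It sup-convolves $U$, uses that $u_\epsilon(x)=U_\epsilon(|x|)$ is semiconvex, and applies Jensen's lemma to pass to points $x_k$ of twice differentiability. There the Hessian of $u_\epsilon$ is exactly $\mathrm{diag}(c_{11},b_1/r_k,\dots,b_1/r_k)$. The full matrix inequality $c\le D^2\psi(x_k)$ then gives $\bar\lambda_i(r_k,c_{11},b_1)=\lambda_i(c)\le\lambda_i(D^2\psi(x_k))$ for every $i$ at once. The price is working with the perturbed operators $\bar\lambda_{i,\delta}$, $f_\delta$ and a double limit.
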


A statement for supersolutions, pararell to the above, is valid.

To avoid disrupting the main focus of this article, the proof of Proposition \ref{eqv-rad-gen} will be included in the appendix.

\section{The case $\lambda_i$ for $i\leq N-1$}\label{theproblem}

\subsection{Nonexistence of blow-up subsolution}  In this section we give the proof of Theorem \ref{noblowupint}  together with some remarks.

\begin{proof}[Proof of Theorem \ref{noblowupint}]
Since any subsolution of  \eqref{blowuppb} is also a subsolution of $-\lambda_i(D^2u)+|Du|^p= \max\left\{0,C\right\}$ in $\Omega$, we may assume $C\geq0$ without loss of generality.

\noindent
{\bf Step 1} Construction of a radial blow-up subsolution of \eqref{blowuppb} satisfying \eqref{binfty},  starting from a viscosity subsolution $u\in \USC(\Omega)$ .

By assumption, there exists a ball $B_R(y_0)\subset\Omega$ such that $x_0\in\partial B_R(y_0)$. Arguing as in the proof of Proposition \ref{4marprop1} and using the fact that the equation in \eqref{blowuppb} is invariant by translation and rotation, the function
$$w(x)=\max_{|y-y_0|=|x-x_0|}u(y)\,,\;\quad x\in B_R(y_0)$$
is upper semicontinuous in $B_R(y_0)$ and  a subsolution of \eqref{blowuppb} . Moreover $w$ is radial around $y_0$, that is
  $w(x)=U(r)$ with $r=|x-y_0|\in[0,R)$. Taking $x=y_0+\frac rR(x_0-y_0)$  in the definition of $w$, we also have
$$
U(r)\geq u(y_0+\frac rR(x_0-y_0)) \qquad\forall r\in[0,R).
$$
Given that  $y_0+\frac rR(x_0-y_0)\to x_0$ as $r\to R^-$, by the blow-up condition \eqref{binfty} we obtain
\begin{equation}\label{11febeq3}
\lim_{r\to R^-}U(r)=+\infty.
\end{equation} 

\medskip
\noindent
{\bf Step 2} Nonexistence of radial subsolutions of
\begin{equation}\label{pom11febeq1}
-\lambda_i(D^2u)+|Du|^p=C\quad\;\text{in $B_R(y_0)$}
\end{equation}
for  $R>0$ and $y_0\in\mathbb R^N$,  satisfying
\begin{equation}\label{pom11febeq1blow}
\lim_{|x-y_0|\to R^-}u(x)=+ \infty.
\end{equation}

Let us assume by contradiction that such a function $u(x)=U(r)$, $r=|x-y_0|$ exists.
Set \[
K:=\max\Big\{x\geq 0\mid x^p-\frac{2x}{R}\leq C\Big\} \in(0,+\infty).
\]
For $0<\varepsilon<\frac R2$,  consider the radial test function
$$
\phi_\varepsilon(x)=\Phi_\varepsilon(r)=U\left(\frac R2\right)+K\left(r-\frac R2\right)+\frac{\varepsilon}{R-\varepsilon-r}\quad\text{for $r\in\left[\frac R2,R-\varepsilon\right)$}.
$$
Note that 
$$
(U-\Phi_\varepsilon)\left(\frac  R2\right)<0\;,\quad\lim_{r\to{(R-\varepsilon)}^-}(U-\Phi_\varepsilon)(r)=-\infty.
$$
We claim that 
\begin{equation}\label{29giu26peq1}
(U-\Phi_\varepsilon)(r)<0\quad\text{for $r\in\left[\frac  R2,R-\varepsilon\right)$}.
\end{equation}
To see this, we argue by contradiction and assume that \eqref{29giu26peq1} does not hold. As a consequence, there is $r_\varepsilon\in\left(\frac R2,R-\varepsilon\right)$ such that $(U-\phi_\varepsilon)(r_\varepsilon)=\max_{r\in\left[\frac R2,R-\varepsilon\right)}(U-\phi_\varepsilon)(r)$. By the viscosity property of $u$, and using the fact that $\lambda_i(D^2\phi(x))\leq\frac{\Phi'(r)}{r}$, we obtain
\begin{equation}\label{29giu26peq2}
-\frac{\Phi'_\varepsilon(r_\varepsilon)}{r_\varepsilon}+{|\Phi_\varepsilon'(r_\varepsilon)|}^p\leq C.
\end{equation}
Since  $r_\varepsilon>\frac R2$ and $\Phi'_\varepsilon>0$, we have
$$
-\frac2R\Phi'_\varepsilon(r_\varepsilon)+{\left(\Phi'_\varepsilon(r_\varepsilon)\right)}^p\leq C.
$$
By the definition of $K$, it follows that  $\Phi'_\varepsilon(r_\varepsilon)\leq K$. On the other hand, we have
$$\Phi_\varepsilon'(r_\varepsilon)=K+\frac{\varepsilon}{{(R-\varepsilon-r_\varepsilon)}^2}>K.$$ 
This contradiction proves the claim.

\medskip

By letting $\varepsilon\to0^+$ in \eqref{29giu26peq1}, we obtain  
$$
U(r)\leq U\left(\frac R2\right)+K\left(r-\frac R2\right)\quad\text{for $r\in\left[\frac R2,R\right)$}.
$$
This inequality rules out the blow-up condition $U(r)\to+\infty$ as $r\to R^-$. Hence, the nonexistence of radial blow-up subsolutions is proved.

\end{proof}

\begin{rem}
\rm We remark that the nonexistence of  radial boundary blow-up subsolutions to \eqref{blowuppb} is also a consequence of the Lipschitz continuity property of one dimensional viscosity subsolutions to the eikonal  equation $|U'|\leq K$ in $(R/2,R)$. Indeed, testing $u(x)=U(r)$ by radial smooth functions, we see that $U(r)$ is a viscosity subsolution of $-\frac{U'(r)}{r}+|U'(r)|^p\leq C$ in $(0,R)$, which implies that for some constant $K = K(C,p,R) > 0$, $U$ is a viscosity subsolution of $|U'|\leq K$ in $(R/2,R)$. Then $U$ is Lipschitz continuous in $(R/2,R)$, with Lipschitz constant $K$ (see e.g.  \cite[Proposition 1.14]{I}), and hence $U(r) \leq U(2R/3) + K|r - 2R/3|$ for $r\in (R/2,R)$, which contradicts \eqref{binfty}.
\end{rem}

\begin{rem}
\rm We point out that if, instead of  \eqref{binfty}, we consider  the blow-down condition
\begin{equation}\label{28lug26eq1}
\lim_{x\to x_0}u(x)=-\infty,
\end{equation}
 then the equation $$
-\lambda_i(D^2u)+|D u|^p=C \quad\,\text{in $\Omega$}
$$ admits solutions satisfying \eqref{28lug26eq1}, at least for  specific domains $\Omega$ and boundary points $x_0\in\partial\Omega$, provided that $p\in(1,2]$ and $i \in \{2, \ldots, N\}$.  For example, this occurs for the punctured ball $\Omega = B_R(0) \setminus \{0\}$, with $R$ sufficiently small, taking $x_0 = 0$ (see \cite[Proposition 3.4]{BGR}). \\
If, on the contrary, $i=1$, the above equation cannot admit subsolutions satisfying \eqref{28lug26eq1} provided that the interior sphere condition holds at $x_0\in\partial\Omega$. This is a direct consequence of the fact that subsolutions of $-\lambda_1(D^2u)=C$ in $B_R(y_0)\subset\Omega$, with $x_0\in\partial B_R(y_0)$, are semiconvex in $B_R(y_0)$ (see Remark \ref{convexity}) and are thus bounded from below in $B_R(y_0)$, which prevents \eqref{28lug26eq1}.
\end{rem}

\subsection{On the Dirichlet problem} 
Let $\Omega\subset\mathbb R^N$ be a bounded domain. For $p>1$ and $\gamma>0$,  consider the Dirichlet problems
\begin{equation}\tag{DP$_0$}\label{DP0}
\left\{\begin{array}{cl}
-\lambda_i(D^2u)+|D u|^p=f(x) & \text{in $\Omega$}\\
u=0 & \text{on $\partial \Omega$}
\end{array}\right.
\end{equation}
and 
\begin{equation}\tag{DP$_\gamma$}\label{DP}
\left\{\begin{array}{cl}
-\lambda_i(D^2u)+|D u|^p+\gamma u=f(x) & \text{in $\Omega$}\\
u=0 & \text{on $\partial\Omega$.}
\end{array}\right.
\end{equation}
The aim of this section is to show that, from the viewpoint of the existence of solutions, problems \eqref{DP} and \eqref{DP0} are equivalent, at least in the model case $\Omega = B_R$ and $f = -C$, with $C$ a positive constant. This phenomenon contrasts with the uniformly elliptic framework, in which \eqref{DP}, associated with a uniformly elliptic operator $F(D^2 u)$, admits a unique solution (as in the subquadratic case $p \le 2$), while  \eqref{DP0} does not.

\begin{proof}[Proof of Proposition \ref{DPvsDP0}]
If $\underline u\in C(\overline B_R)$ is solution of \eqref{DP0}, it cannot have any local maximum in $B_R$ since $C>0$. Hence  $\underline u\leq0$ in $B_R$ and $\underline u=0$ on $\partial B_R$. Moreover, using $\gamma>0$, $\underline u$ is in turn a subsolution to \eqref{DP}. Since $\overline u=0$ is a trivial supersolution to \eqref{DP}, the existence of a unique solution follows by Perron's method, see \cite[Theorem 4.1]{CIL}. \\
Suppose now that $u_\gamma$ is the solution of \eqref{DP}. We claim that $R\leq \bar R$. If the claim is true, then the conclusion follows by \cite[Proposition 3.1 and Remark 3.3]{BGR}. In order to prove the claim, note that as  consequence of the comparison principle, 
 the function $u_\gamma$ is radial, say $u_\gamma(x)=U_\gamma(|x|)$. Hence 
$$
-\lambda_i(D^2u_\gamma)+|Du_\gamma|^p=-C-\gamma U_\gamma(|x|)\quad\text{in $B_R$}
$$
and,  by Proposition \ref{4marprop1}, we obtain that 
\begin{equation}\label{pom12febeq3}
-\frac{p-1}{{\left(pr\right)}^\frac{p}{p-1}}\leq-C-\gamma U_\gamma(r)\quad\,\text{for $r\in(0,R)$}.
\end{equation}
Using that $U_\gamma(r)\to0$ as $r\to R^-$, by \eqref{pom12febeq3} we deduce
$$
-\frac{p-1}{{\left(pR\right)}^\frac{p}{p-1}}\leq -C,
$$
which is the condition $R\leq\bar R$.

\smallskip
To complete the proof, we observe that if $R\leq\bar R$, then, in view of \cite[Proposition 3.1 and Remark 3.3]{BGR} and the comparison principle \cite[Proposition 3.8]{BGR}, problem \eqref{DP0} admits a unique solution. As a consequence,  \eqref{DP} also admits a  solution which is unique, again by the comparison principle.  Conversely, if \eqref{DP0}, or equivalently \eqref{DP}, admits a solution, then necessarily $R\leq \bar R$, as proved above.
\end{proof}

We conclude the section providing a sufficient condition for the well posedness of \eqref{DP}. Such a result depends on \cite[Theorem 3.6]{BGR}, which concerns the existence and uniqueness of \eqref{DP0}.

\begin{pro}\label{suffcondDP}
Let $\Omega\subset\mathbb R^N$ be a bounded domain, $f\in C(\Omega)\cap L^\infty(\Omega)$ and set 
\begin{equation}\label{barRi}
 \bar R=\frac{{(p-1)}^{\frac{p-1}{p}}}{p{\left\|f^-\right\|}_{L^\infty(\Omega)}^{\frac{p-1}{p}}}.
\end{equation} Suppose that $\Omega$ is a uniformly convex domain such that
\begin{equation}\label{uniformconvex}
\Omega=\bigcap_{y\in Y}B_R(y)\quad\text{for some $Y\subset\mathbb R^N$ and $R\leq\bar R$.} 
\end{equation}
Then the problem \eqref{DP} has a unique viscosity solution $u\in C(\overline\Omega)$ for any $\gamma>0$.
\end{pro}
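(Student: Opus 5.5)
The plan is to use Perron's method, with barriers built from the $\gamma=0$ problem. The result \cite[Theorem 3.6]{BGR} gives existence and uniqueness for \eqref{DP0} on uniformly convex domains satisfying \eqref{uniformconvex} with $R\leq\bar R$ as in \eqref{barRi}. The idea is to obtain sub- and supersolutions of \eqref{DP} from solutions of auxiliary $\gamma=0$ problems whose right-hand sides are constants, and then close the argument with the comparison principle.

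\textbf{Subsolution.} Apply \cite[Theorem 3.6]{BGR} with the constant source $-\|f^-\|_\infty$. Its associated radius is exactly $\bar R$, so there is a solution $\underline u\in C(\overline\Omega)$ of $-\lambda_i(D^2\underline u)+|D\underline u|^p=-\|f^-\|_\infty$ in $\Omega$ with $\underline u=0$ on $\partial\Omega$. If $\|f^-\|_\infty=0$, take $\underline u=0$ instead.
\begin{itemize}
\item As in the proof of Proposition \ref{DPvsDP0}, $\underline u$ has no interior local maximum when $\|f^-\|_\infty>0$; touching from above by constants would force $0\le -\|f^-\|_\infty$. Hence $\underline u\le 0$.
\item Consequently $-\lambda_i(D^2\underline u)+|D\underline u|^p+\gamma\underline u\le -\|f^-\|_\infty\le f$, so $\underline u$ is a subsolution of \eqref{DP} with zero boundary values.
\end{itemize}

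\textbf{Supersolution.} Any $\overline u\ge0$ with $-\lambda_i(D^2\overline u)+\gamma\overline u\ge \|f^+\|_\infty$ and $\overline u=0$ on $\partial\Omega$ will do, since the gradient term is nonnegative.
\begin{itemize}
\item Because $\Omega\subset B_R(y)$ for some $y\in Y$, the function $\overline u(x)=\frac{\|f^+\|_\infty}{2}(R^2-|x-y|^2)$ has Hessian $-\|f^+\|_\infty I_N$ and is nonnegative on $\Omega$. However, it does not vanish on all of $\partial\Omega$.
\item To fix the boundary values, use the infimum over $y\in Y$ of these quadratics. An infimum of supersolutions is a supersolution (its lower semicontinuous envelope, by \cite[Lemma 4.2]{CIL}). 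Uniform convexity, i.e. $\Omega=\bigcap_{y\in Y} B_R(y)$, ensures that for each $x_0\in\partial\Omega$ some $y$ with $|x_0-y|=R$ can be approached. This makes the infimum continuous up to $\partial\Omega$ and equal to $0$ there.
\item Alternatively, take $\overline u=\|f^+\|_\infty/\gamma$ in the interior, use the quadratics only as local boundary barriers, and rely on Perron's method with barriers at each boundary point.
\end{itemize}

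\textbf{Perron and comparison.} Since $\underline u\le 0\le\overline u$, Perron's method \cite[Theorem 4.1]{CIL} yields a discontinuous viscosity solution squeezed between them. It attains the boundary value $0$ continuously because both barriers vanish on $\partial\Omega$. The comparison principle for \eqref{DP} then gives continuity and uniqueness. This comparison is easy since $\gamma>0$: the zeroth-order term is strictly monotone and $F$ does not depend on $x$, so the standard doubling-of-variables argument of \cite{CIL} works, using the locally Lipschitz Hamiltonian together with the boundedness of sub- and supersolutions. If needed, the Lipschitz estimates of Section \ref{LipReg} can help here, or one can rescale the subsolution by $\theta<1$ using \eqref{F0}.

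\textbf{Main obstacle.} The delicate step is the boundary behaviour, namely constructing an upper barrier that vanishes on $\partial\Omega$. This is where the geometric hypothesis \eqref{uniformconvex} is used, through the supporting balls $B_R(y)$. The condition $R\le\bar R$ itself enters only through the subsolution from \cite[Theorem 3.6]{BGR}.
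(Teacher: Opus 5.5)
Your proof is correct. Its architecture is the paper's: Perron's method between two barriers vanishing on $\partial\Omega$, with the lower barrier taken from \cite[Theorem 3.6]{BGR} for the constant source $-\|f^-\|_\infty$ and shown to be nonpositive because it has no interior maximum. You genuinely depart from the paper only in the upper barrier. The paper invokes \cite[Theorem 3.6]{BGR} a second time, with source $\|f^+\|_\infty$, to obtain a positive solution of the $\gamma=0$ problem. You instead write down $w(x)=\inf_{y\in Y}\frac{\|f^+\|_\infty}{2}(R^2-|x-y|^2)$ explicitly. This works, and more cleanly than you indicate:
\begin{itemize}
\item Each quadratic is nonnegative on $\Omega\subset B_R(y)$ and has Hessian $-\|f^+\|_\infty I_N$, so it is a supersolution of \eqref{DP} in $\Omega$.
\item The gradients of the family are bounded by $\|f^+\|_\infty R$ on the convex set $\overline\Omega$. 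Hence $w$ is Lipschitz on $\overline\Omega$, and no semicontinuous envelope is needed.
\item Let $x_0\in\partial\Omega$. Since $\Omega$ is open and equals $\bigcap_{y\in Y}B_R(y)$, some $y\in Y$ has $x_0\notin B_R(y)$, while $x_0\in\overline{B_R(y)}$. So $|x_0-y|=R$ is attained exactly (not merely approached), and $w(x_0)=0$.
\end{itemize}
What your route buys is transparency. The supersolution uses only the inclusions $\Omega\subset B_R(y)$ and no size condition. The hypothesis $R\le\bar R$ therefore enters solely through the subsolution, where it is genuinely necessary (cf.\ Proposition \ref{DPvsDP0}). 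Your construction mirrors the paper's own explicit infimum of linear functions $\eta_z$ in Proposition \ref{DPl}. The paper's route is shorter, delegating both barriers to \cite{BGR}. One small point on your comparison step: since $f$ is only continuous in $\Omega$, the doubling-of-variables maximizers must be localized in a compact subset of $\Omega$. This follows from the boundary ordering and semicontinuity, and the paper leaves it implicit as well.
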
 
\begin{proof}
In view of \cite[Theorem 3.6]{BGR}, there exist $\underline u\in C(\overline\Omega)$ and $\overline u\in C(\overline\Omega)$, respectively negative and positive solutions of 
\begin{equation*}
\left\{\begin{array}{cl}
-\lambda_i(D^2\underline u)+|D \underline u|^p=-\left\|f^-\right\|_{L^\infty(\Omega)} & \text{in $\Omega$}\\
\underline u=0 & \text{on $\partial \Omega$}
\end{array}\right.
\end{equation*} and
\begin{equation*}
\left\{\begin{array}{cl}
-\lambda_i(D^2\overline u)+|D \overline u|^p=\left\|f^+\right\|_{L^\infty(\Omega)} & \text{in $\Omega$}\\
\overline u=0 & \text{on $\partial \Omega$.}
\end{array}\right.
\end{equation*}
Since $\gamma > 0$, $\underline u$ and $\overline u$ are sub- and supersolutions of \eqref{DP}, respectively. The existence of the desired solution then follows from Perron's method.  
\end{proof}

\begin{rem}
\rm The Dirichlet problem \eqref{DP} for the operator $-\lambda_N(\cdot)$ will be addressed in Subsection \ref{DirlN}. An analogue of Proposition \ref{suffcondDP} will be established in Proposition \ref{DPl}, where, in fact, the condition on $\bar R$ will be relaxed.
\end{rem}

\section{The case $\lambda_N$}\label{ergodic}\label{lambdaN}

\subsection{An explicit radial blow-up solution} We begin this section by proving that, differently from the case of $\lambda_i$ with $i\leq N-1$, the boundary blow-up problem
\begin{equation}\label{13febeqDP}
\left\{\begin{array}{cl}
-\lambda_N(D^2u)+|D u|^p=-C & \text{in $B_R$}\\
\displaystyle\lim_{x\to\partial B_R}u(x)=+\infty  & \,
\end{array}\right.
\end{equation}
has a radial solution for a suitable choice of $C$.

\begin{pro}\label{proErgRad}
Let $p\in(1,2]$ and set $C={\left(\frac1R\int_{0}^{+\infty}\frac{dt}{1+t^p}\right)}^\frac{p}{p-1}$. Then there exist $u\in C^2(B_R)$
 radial solution of \eqref{13febeqDP}.
\end{pro}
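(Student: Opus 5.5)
The natural plan is to follow the ODE construction of Proposition \ref{propLN} with the constant source $f\equiv -C$, where $C$ is now exactly the threshold value. We look for $u(x)=U(|x|)$ with $U'=v$, where $v$ solves
\[
v'(r)=|v(r)|^p+C,\qquad v(0)=0 .
\]
Since $C>0$, $v$ is increasing and nonnegative. Separating variables, as for the comparison function $w$ in the proof of Proposition \ref{propLN}, gives the explicit formula
\[
v(r)=C^{1/p}F^{-1}\bigl(C^{\frac{p-1}{p}}r\bigr),\qquad F(\tau)=\int_0^\tau\frac{dt}{1+t^p}.
\]
The choice of $C$ gives $C^{\frac{p-1}{p}}R=F(\infty)$. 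Hence $v$ is defined and smooth exactly on $[0,R)$, and $v(r)\to+\infty$ as $r\to R^-$.

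Next we check that $U(r)=\int_0^r v(s)\,ds$ gives a classical solution. First, $h(r)=r(v^p+C)-v$ satisfies $h(0)=0$ and
\[
h'(r)=p\,r\,v^{p-1}v'\ge 0,
\]
which is the computation in Proposition \ref{propLN} with $f'=0$. Thus $U''=v'\ge v/r=U'/r>0$, so $\lambda_N(D^2u)=U''$ for $x\neq0$ and
\[
-\lambda_N(D^2u)+|Du|^p=-v'+v^p=-C .
\]
At the origin, $v(0)=0$ and $v'(0)=C$ give $D^2u(0)=CI_N$, so $u\in C^2(B_R)$ and the equation also holds at $0$. In effect this is Proposition \ref{propLN} with $f\equiv -C$ at the equality case of \eqref{11mareq1}; we redo it only to keep track of the blow-up.

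The main obstacle is the boundary condition: we need $U(r)\to+\infty$, not merely $U'=v\to+\infty$. This is where $p\le2$ enters. Near $r=R$ we use the asymptotics of $F$: since
\[
F(\infty)-F(\tau)=\int_\tau^\infty\frac{dt}{1+t^p}\sim\frac{\tau^{1-p}}{p-1}\quad\text{as }\tau\to\infty,
\]
setting $\tau=v/C^{1/p}$ yields
\[
C^{\frac{p-1}{p}}(R-r)\sim \frac{1}{p-1}\,\tau^{1-p},\qquad\text{so}\qquad v(r)\sim c_p\,(R-r)^{-\frac1{p-1}}
\]
for some constant $c_p>0$. For $p\le2$ we have $\frac1{p-1}\ge1$, so $v$ is not integrable at $R$ and $\int_0^r v\,ds\to+\infty$. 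Since $u$ is radial, $u(x)\to+\infty$ as $x\to\partial B_R$.

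For a cleaner argument one can avoid the asymptotic equivalence and compare directly: $v'\ge v^p$ gives $(v^{1-p})'\le 1-p$, and integrating from $r$ to $R$ with $v^{1-p}(R^-)=0$ yields
\[
v(r)\le\bigl((p-1)(R-r)\bigr)^{-\frac1{p-1}}.
\]
This is only an upper bound, whereas divergence needs a lower bound. For that, use $v'\le(1+\epsilon)v^p$ for $r$ close to $R$, which holds because $v\to\infty$; the same integration then gives $v\gtrsim(R-r)^{-\frac1{p-1}}$ near $R$. This suffices to conclude.
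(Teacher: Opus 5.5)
Your proof is correct and follows the paper's argument: the same explicit $U'=C^{1/p}F^{-1}(C^{\frac{p-1}{p}}r)$, the same monotone function $h$ to obtain $U''\ge U'/r>0$, and the same use of $p\le 2$ for the blow-up. The only difference is the last step. The paper substitutes $s=F^{-1}(C^{\frac{p-1}{p}}t)$ to write $U(r)=C^{\frac{2-p}{p}}\int_0^{F^{-1}(C^{\frac{p-1}{p}}r)}\frac{s\,ds}{1+s^p}$, which diverges directly. You instead go through the rate $v\sim c_p(R-r)^{-1/(p-1)}$, or the comparison $v'\le(1+\epsilon)v^p$ near $R$; both routes are valid.
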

\begin{proof}
As in the proof of Proposition \ref{propLN}, we consider the function 
$$F(s)=\int_0^s\frac{dt}{1+t^p}\,,\;\quad s\geq0.$$
The function $F(s)$ is smooth, increasing and concave in $[0,+\infty)$. Moreover $F(0)=0$ and
\begin{equation}\label{13febeq1}
\lim_{s\to+\infty}F(s)=RC^\frac{p-1}{p}.
\end{equation}
Set
$$
U(r)=C^\frac1p\int_0^rF^{-1}\left(C^\frac{p-1}{p}t\right)\,dt\,,\;\quad r\in[0,R).
$$
We claim that $U$ solves
\begin{equation}\label{13febeq3}
\left\{\begin{array}{cl}
-U''(r)+{\left(U'(r)\right)}^p=-C & \text{for $r\in(0,R)$}\\
U'(0)=0 &\,\\
U''(r)\geq\frac{U'(r)}{r}>0 & \text{for $r\in(0,R)$}\\
\displaystyle\lim_{r\to R^-}U(r)=+\infty. &
\end{array}\right.
\end{equation}
The validity of all the conditions in \eqref{13febeq3} implies that $u(x)=U(|x|)$ is a solution of \eqref{13febeqDP}.

\smallskip
Since for $ r\in[0,R)$ one has $$U'(r)=C^\frac1pF^{-1}\left(C^\frac{p-1}{p}r\right)\,,$$
it is then evident that $U'(0)=0$ and $U'(r)>0$ for any $r\in(0,R)$. Moreover, for any $r\in[0,R)$, 
$$
U''(r)=C\left(1+{\left(F^{-1}\left(C^\frac{p-1}{p}r\right)\right)}^p\right)=C+{(U'(r))}^p.
$$
As far as the condition $U''(r)\geq\frac{U'(r)}{r}$ for $r\in(0,R)$, by a direct computation and using the equation satisfied by $U$, it is easy to see that it is equivalent to
the condition $h(r)\geq0$ in $(0,R)$, where 
$$
h(r)=r\left({\left(U'(r)\right)}^p+C\right)-U'(r).
$$  
Since the function $U$ is increasing and convex in $[0,R)$, we have that
$$
h'(r)=pr{\left(U'(r)\right)}^{p-1}U''(r)\geq0\quad r\in[0,R).
$$
Then $h(r)\geq h(0)=0$ for any $r\in(0,R)$ as required.

It remains to prove the limit condition in \eqref{13febeq3}. For this, using \eqref{13febeq1} and $p\leq2$ we conclude
\begin{equation*}
\begin{split}
\lim_{r\to R^-}U(r)&=\lim_{r\to R^-}C^\frac1p\int_0^rF^{-1}\left(C^\frac{p-1}{p}t\right)\,dt
={C^{\frac{2-p}{p}}}\lim_{r\to R^-}\int_0^{F^{-1}\left(C^\frac{p-1}{p}r\right)}sF'(s)\,ds\\
&={C^\frac{2-p}{p}}\int_0^{+\infty}\frac{s}{1+s^p}\,ds=+\infty.
\end{split}
\end{equation*}
\end{proof}
\subsection{Ergodic pairs} 
In this subsection we consider  a bounded $C^2$ domain $\Omega\subset\mathbb R^N$. Given $p\in(1,2]$ and $f\in C(\Omega)\cap L^\infty(\Omega)$, we denote by $u_\gamma\in C(\overline\Omega)$, $\gamma>0$, the unique viscosity solution (see Proposition \ref{DirN}) to the Dirichlet problem
\begin{equation}\label{3juneq1}
\left\{
\begin{array}{cl}
-\lambda_N(D^2u)+\left|Du\right|^p+\gamma u=f(x) & \text{in $\Omega$}\\
u=0 & \text{on $\partial\Omega$.}
\end{array}\right.
\end{equation}

\begin{thm}\label{propergodic1}
Let $f\in C(\Omega)\cap L^\infty(\Omega)$. If there exists $w\in\USC(\overline\Omega)$ bounded subsolution of
\begin{equation}\label{17apreq2}
\left\{\begin{array}{cl}
-\lambda_N(D^2w)+{|Dw|}^p=f(x) & \text{in $\Omega$}\\
w=0 & \text{on $\partial \Omega$,}
\end{array}\right.
\end{equation}
then the solution $u_\gamma$ of \eqref{3juneq1} satisfies:
\begin{enumerate}
	\item[a)] $(u_\gamma)_{\gamma}$ is bounded in $C(\overline\Omega)$;
	\item[b)] there exists a sequence $\gamma_n\to0^+$ such that $u_{\gamma_n}$ converges  uniformly in $\overline\Omega$ to a solution of \eqref{17apreq2}.
\end{enumerate}
\end{thm}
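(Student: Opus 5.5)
The plan is to obtain uniform bounds on $u_\gamma$ from sub- and supersolutions that do not depend on $\gamma$, then derive equicontinuity up to the boundary, and finally pass to the limit by stability of viscosity solutions.

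\emph{Lower bound.} Since $w$ is bounded and $w=0$ on $\partial\Omega$, consider $\underline w:=w-\|w\|_\infty\le 0$. It is still a subsolution of $-\lambda_N(D^2u)+|Du|^p=f$, and since $\underline w\le 0$ it satisfies
\[
-\lambda_N(D^2\underline w)+|D\underline w|^p+\gamma\underline w\le f .
\]
This alone does not respect the boundary condition. A better choice is to use $w$ itself: on the set where $w\le 0$ we have $\gamma w\le 0$, so $w$ is a subsolution there. To avoid this case distinction, I would simply compare $u_\gamma$ with $w-\|w^+\|_\infty$. This function is $\le 0$ everywhere and $\le 0$ on $\partial\Omega$, so the comparison principle for \eqref{3juneq1} (Proposition \ref{DirN}, which gives uniqueness) yields
\[
u_\gamma\ge w-\|w^+\|_\infty\ge -2\|w\|_\infty .
\]

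\emph{Upper bound.} A positive supersolution independent of $\gamma$ is $\overline u=M-\tfrac{\|f^+\|_\infty}{2}|x-x_1|^2$, adjusted so that $\overline u\ge 0$ on $\partial\Omega$; any $M\ge \tfrac{\|f^+\|_\infty}{2}\,\mathrm{diam}(\Omega)^2$ works, with $x_1\in\Omega$. Then $-\lambda_N(D^2\overline u)=\|f^+\|_\infty\ge f$ and $\gamma\overline u\ge 0$. Comparison gives $u_\gamma\le M$, which proves a).

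\emph{Equicontinuity.} The function $\gamma\|u_\gamma\|_\infty$ is now bounded (in fact it tends to $0$). Proposition \ref{prop1regularity}, after scaling and covering, gives local Lipschitz bounds in $\Omega$. Here I need $f$ Lipschitz, whereas the statement assumes only $f$ continuous. I would instead rely on Proposition \ref{prop4regularity}: $u_\gamma$ is a supersolution of $-\lambda_N(D^2u)+|Du|^p\ge -\|f^-\|_\infty-\gamma\|u_\gamma\|_\infty$, so it is locally Lipschitz with a uniform constant. For continuity up to $\partial\Omega$, uniform barriers are needed.
\begin{itemize}
\item From above, $C^2$ regularity of $\Omega$ gives an exterior sphere at each boundary point, and a $\gamma$-independent local supersolution vanishing at that point (e.g.\ built from the distance function with a concave profile) controls $u_\gamma$ near $\partial\Omega$.
\item From below, one needs a subsolution that is $\le 0$ and vanishes at the boundary point, uniformly in $\gamma$. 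This is the main obstacle. If $w$ were continuous with $w=0$ on $\partial\Omega$, then $u_\gamma\ge w-\|w^+\|_\infty$ would not suffice; $u_\gamma\ge \min(w,0)$-type bounds would. I would try to show $u_\gamma\ge w$ directly, via comparison on $\{w>0\}$, using $\gamma w>0$ there and Perron's method.
\end{itemize}
Should full equicontinuity at the boundary fail, I would pass to half-relaxed limits and use a strong comparison for \eqref{17apreq2}.

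\emph{Passage to the limit.} By Arzelà–Ascoli, a sequence $u_{\gamma_n}$ converges uniformly to some $u$. Since $\gamma_n u_{\gamma_n}\to0$ uniformly, stability of viscosity solutions shows that $u$ solves \eqref{17apreq2} with $u=0$ on $\partial\Omega$, which proves b).
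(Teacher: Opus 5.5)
\textbf{Gap: the lower barrier at $\partial\Omega$.} Your bounds in a), your interior Lipschitz estimate via Proposition~\ref{prop4regularity}, and the final Arzel\`a--Ascoli/stability step all agree with the paper. Proposition~\ref{prop4regularity} is the right choice here, since $f$ is only continuous. The genuine gap is the one you flag yourself: a $\gamma$-independent lower barrier at $\partial\Omega$. Without it you get neither equicontinuity on $\overline\Omega$ nor $u=0$ on $\partial\Omega$ for the limit, and your proposed remedies do not supply it.
\begin{enumerate}
\item On $\{w>0\}$ the term $\gamma w>0$ is precisely what prevents $w$ from being a subsolution of \eqref{3juneq1}. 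The sign goes the wrong way, so comparison cannot give $u_\gamma\ge w$.
\item Even a valid lower bound by $w$, or by $\min(w,0)$, would say nothing at the boundary. The function $w$ is only upper semicontinuous on $\overline\Omega$, so it gives no control of $\liminf_{x\to x_0}u_\gamma(x)$ for $x_0\in\partial\Omega$. Moreover, $\min(w,0)$ need not be a subsolution.
\item The half-relaxed-limit fallback would need a comparison principle for the equation without zeroth-order term under a generalized Dirichlet condition. No such principle is available here, and it would not yield uniform convergence on $\overline\Omega$ anyway.
\end{enumerate}

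\textbf{How the paper closes it.} The lower barrier comes from the equation, not from $w$. Proposition~\ref{Nbarriers} gives, on a collar $\Omega_{\varepsilon_0}$, the function $\underline u(x)=-\frac12\log(1+\varepsilon_0^{-2}d(x))$. It satisfies $-\lambda_N(D^2\underline u)+|D\underline u|^p\le -M$ and $\underline u<0$ in $\Omega_{\varepsilon_0}$, together with $\underline u\le -M$ on $\partial\Omega_{\varepsilon_0}\cap\Omega$ and $\underline u=0$ on $\partial\Omega$. It works because $\lambda_N(D^2\underline u)\ge\langle D^2\underline u\,Dd,Dd\rangle=\frac{\varepsilon_0^{-4}}{2(1+\varepsilon_0^{-2}d)^2}$. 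This dominates $|D\underline u|^p$ when $p\le 2$ and $\varepsilon_0$ is small, which is exactly where the top eigenvalue and $p\le2$ enter.

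Take $M=\max\{\|f\|_\infty,\sup_\gamma\|u_\gamma\|_\infty\}$, which is finite by part a). Since $\underline u<0$, it is a subsolution of \eqref{3juneq1} in $\Omega_{\varepsilon_0}$. It equals $u_\gamma$ on $\partial\Omega$ and lies below $u_\gamma$ on $\partial\Omega_{\varepsilon_0}\cap\Omega$. Comparison therefore gives $\underline u\le u_\gamma\le\overline u$ in $\Omega_{\varepsilon_0}$, uniformly in $\gamma$, where $\overline u=Cd$ comes from the same proposition. Combining this boundary modulus with the interior Lipschitz bounds on $\Omega\setminus\Omega_\varepsilon$ gives equicontinuity on $\overline\Omega$. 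With that in hand, your limiting argument goes through.
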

\begin{rem}\rm
Corollary \ref{cor6lug26} provides a sufficient condition for the existence of a continuous viscosity solution to \eqref{17apreq2}.
\end{rem}
\begin{proof}[Proof of Theorem \ref{propergodic1}]
The family $(u_\gamma)_\gamma$ is bounded in $C(\overline \Omega)$. For this, let $R>0$ be such that $\Omega\subseteq B_R$ and consider the function $v=\frac{\left\|f^+\right\|_\infty}{2}(R^2-|x|^2)$. Since $D^2v=\left\|f^+\right\|_\infty I_N$, then  the function $v$ is a nonnegative classical supersolution of $-\lambda_N(D^2v)+|Dv|^p+\gamma v\geq f(x)$ in $\Omega$. Hence 
\begin{equation}\label{17apreq1}
u_\gamma(x)\leq v(x)\leq\frac{\left\|f^+\right\|_\infty}{2}R^2\quad\forall x\in\overline\Omega. 
\end{equation}
Since $w$ is a bounded subsolution of \eqref{17apreq2}, then $w-\left\|w^+\right\|_\infty$ is a nonpositive subsolution of \eqref{3juneq1}. By comparison, we obtain  the lower bound  $u_\gamma(x)\geq -(\left\|w^-\right\|_\infty+\left\|w^+\right\|_\infty)$ for any  $x\in\overline\Omega$. This proves that $(u_\gamma)_\gamma$ is uniformly bounded in $\overline\Omega$
Writing 
$d(x):=\dist(x,\partial \Omega)$ and $\Omega_\varepsilon:=\{x\in\Omega : d(x)<\varepsilon\}$
for $\varepsilon>0$, we use Proposition \ref{Nbarriers} below with $M=\max\left\{\left\|f\right\|_\infty,\sup_{\gamma>0}\left\|u_\gamma\right\|_\infty\right\}$ and $\varepsilon_0>0$ sufficiently small, to obtain the uniform bounds $$
\underline u(x)\leq u_\gamma(x)\leq\overline u(x)\quad\forall x\in\Omega_{\varepsilon_0}.
$$
By the local Lipschitz estimates of Proposition \ref{prop4regularity} or \ref{prop1regularity}, we obtain that $(u_\gamma)_{0<\gamma<1}$  is also locally uniformly Lipschitz. We claim that the collection  $(u_\gamma)_{0<\gamma<1}$ is equicontinuous on $\ol\Omega$. To see this, we choose 
an increasing continuous function $\nu$, vanishing at $0$, on $[0,\,\varepsilon_0]$ 
(recalling that $\underline{u}, \overline{u}\in C(\overline{\Omega}_{\varepsilon_0})$ 
and $\underline{u}, \overline{u}=0$ on $\partial\Omega$)
 such that $$
-\nu(d(x))\leq \underline u(x)\leq \ol u(x)\leq \nu(d(x))\quad \text{ for }x\in \ol{\Omega}_{\ep_0},
$$
and also a constant $K_\ep>0$ for each $\ep\in (0,\,\ep_0)$ such that 
$$
|u_\gamma(x)-u_\gamma(y)|\leq K_\ep|x-y| \quad \text{ for } x,y\in \Omega\setminus \Omega_{\ep}.
$$
Fix any  $\eta>0$. We select $\ep, \delta\in(0,\,\ep_0)$ such that 
$$\nu(\ep) <\eta\quad\text{and}\quad K_\ep \delta<\eta.
$$  
Let $x,y\in \Omega$ be such that $|x-y|<\delta$. 
Observe that if $x, y \in \Omega \setminus \Omega_\varepsilon$, then $ |u_\gamma(x) - u_\gamma(y)| \le K_\varepsilon \delta < \eta$; 
that if $x \in \Omega \setminus \Omega_\varepsilon$ and $y \in \Omega_\varepsilon$, there is a point $z \in \partial\Omega_\varepsilon \cap \Omega$ such that $ |x - z| \le |x - y|$ 
and therefore 
$$ |u_\gamma(x) - u_\gamma(y)| \le |u_\gamma(x) - u_\gamma(z)| + |u_\gamma(z)| + |u_\gamma(y)| \le K_\varepsilon \delta + 2\nu(\varepsilon) < 3\eta$$  
(by symmetry, the inequality  $ |u_\gamma(x) - u_\gamma(y)| < 3\eta$ 
also holds for  $x \in \Omega_\varepsilon$  and  $y \in \Omega \setminus \Omega_\varepsilon$);
and that if $x, y \in \Omega_\varepsilon$, then $|u_\gamma(x) - u_\gamma(y)| \le 2\nu(\varepsilon) < 2\eta$.
Thus, we always have $|u_\gamma(x) - u_\gamma(y)| \le 3\eta$ for $x, y \in \overline\Omega$ if $|x - y| < \delta$, which ensures the equicontinuity of the collection $(u_\gamma)_{0<\gamma<1}$ on $\overline\Omega$.
  
In view of the Arzelà–Ascoli theorem, we can select a sequence of $\gamma_n\in (0,1)$ 
converging to $0$ such that $u_{\gamma_n} \to u$ uniformly in $\ol\Omega$ for some 
$u\in C(\ol\Omega)$.  
By the stability property  of viscosity solutions (see e.g. \cite[Lemma 6.1 ]{CIL}) we infer that $u$ is a solution of  \eqref{17apreq2}.
\end{proof}

\begin{thm}\label{prop2erg}
Let $f\in Lip_{loc}(\Omega)\cap L^\infty(\Omega)$ and suppose that \eqref{17apreq2} has no viscosity solutions. Then the solution $u_\gamma$ of \eqref{13apreq1} satisfies:
\begin{enumerate}
	\item[a)] $\displaystyle\lim_{\gamma\to0^+}\left\|u_\gamma^-\right\|_\infty=+\infty$;
	\item[b)] there exist a sequence $\gamma_n\to0^+$, as $n\to+\infty$, and a nonnegative constant $C$ such that $\gamma_nu_{\gamma_n}\to-C$ locally uniformly in $\Omega$ as $n\to+\infty$;
	\item[c)] the sequence $v_{\gamma_n}=u_{\gamma_n}+\left\|u_{\gamma_n}^-\right\|_\infty$ converges locally uniformly in $\Omega$, as $n\to+\infty$, to a function $v$ satisfying
\begin{equation}\label{20apreq1}
\left\{\begin{array}{cl}
-\lambda_N(D^2v)+{|Dv|}^p=f(x)+C & \text{in $\Omega$}\\
v=+\infty & \text{on $\partial \Omega$}\\
\displaystyle\min_\Omega v=0. & 
\end{array}\right.
\end{equation}
\end{enumerate}
\end{thm}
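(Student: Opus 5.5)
The plan follows the classical Lasry--Lions/Porretta dichotomy argument. Here $u_\gamma$ is the solution of \eqref{3juneq1}; the reference \eqref{13apreq1} in the statement presumably denotes the same Dirichlet problem. The key tools are the bound \eqref{17apreq1}, the local Lipschitz estimate of Proposition \ref{prop1regularity}, which depends only on $\gamma\|u_\gamma\|_\infty$, and the dichotomy already contained in Theorem \ref{propergodic1}.

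\textbf{Step 1: part a).} By \eqref{17apreq1}, $u_\gamma\le \|f^+\|_\infty R^2/2$ uniformly in $\gamma$. Suppose, along some sequence $\gamma_n\to0$, that $\|u_{\gamma_n}^-\|_\infty$ stays bounded. Then $(u_{\gamma_n})$ is bounded in $C(\overline\Omega)$, and the proof of Theorem \ref{propergodic1} goes through unchanged, since that proof only used the uniform bound, the boundary barriers of Proposition \ref{Nbarriers}, interior Lipschitz bounds and Arzel\`a--Ascoli. This yields a solution of \eqref{17apreq2}, contradicting the hypothesis. Hence $\|u_\gamma^-\|_\infty\to\infty$.

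\textbf{Step 2: a bound on $\gamma u_\gamma$ and part b).} Next I would show that $\gamma\|u_\gamma\|_\infty$ is bounded. The upper side follows from \eqref{17apreq1}. For the lower side, the constant $-\|f\|_\infty/\gamma$ is a subsolution of \eqref{3juneq1} with nonpositive boundary data, so comparison gives $\gamma u_\gamma\ge-\|f\|_\infty$. Proposition \ref{prop1regularity}, applied on balls after rescaling, then makes $(u_\gamma)$ uniformly Lipschitz on each $\omega\subset\subset\Omega$, with constant independent of $\gamma$. Consequently $\operatorname{osc}_\omega(\gamma u_\gamma)\le \gamma L_\omega\,\mathrm{diam}$ along chains, so it tends to $0$.

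To conclude that $\gamma u_\gamma$ is nearly constant, I would fix $x_0\in\Omega$ and use that $\gamma u_\gamma(x_0)$ is bounded. Extracting a subsequence gives $\gamma_n u_{\gamma_n}(x_0)\to -C$, and then $\gamma_n u_{\gamma_n}\to -C$ locally uniformly. To see that $C\ge0$, note that if $-C>0$ then $u_{\gamma_n}\to+\infty$ locally, which is impossible by the upper bound.

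\textbf{Step 3: part c), the main obstacle.} Set $v_n=u_{\gamma_n}-u_{\gamma_n}(x_0)$. These functions are locally uniformly Lipschitz and vanish at $x_0$, so a subsequence converges locally uniformly. By stability, the limit solves $-\lambda_N(D^2v)+|Dv|^p=f+C$, because $\gamma_n u_{\gamma_n}\to -C$.

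The hard part is matching the normalization $u+\|u^-\|_\infty$, which requires showing that the minimum of $u_{\gamma_n}$ is attained in a fixed compact set, together with the blow-up $v=+\infty$ on $\partial\Omega$. Both would come from local boundary barriers in the spirit of Proposition \ref{Nbarriers}. I would compare $u_\gamma-\min u_\gamma$ with a supersolution of the form $c\,d(x)^{-\alpha}$ or $-c\log d$ (for $p=2$), with $\alpha=(2-p)/(p-1)$, built in a neighborhood of $\partial\Omega$ where $\Omega$ is $C^2$. Since $\lambda_N$ sees the normal second derivative, this should behave like the uniformly elliptic one-dimensional profile, as in Proposition \ref{proErgRad}. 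Since $u_\gamma=0$ on $\partial\Omega$ while $\min u_\gamma\to-\infty$, the quantity $u_\gamma-\min u_\gamma$ is large at the boundary. A comparison from below, using a subsolution barrier that blows up as $d\to0$ near $\partial\Omega$, would then show that $v_n$ is large near $\partial\Omega$ uniformly. This localizes the minimum points and gives $v=+\infty$ on $\partial\Omega$. Finally $\min v=0$ follows from uniform convergence on the compact set containing the minimizers. I expect this barrier construction near the boundary to be the main difficulty.
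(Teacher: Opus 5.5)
Your outline is sound, but it is organized differently from the paper's proof.

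\textbf{How the two routes differ.} The paper sets $C:=\lim_n\gamma_n\|u^-_{\gamma_n}\|_\infty$ and works directly with $v_{\gamma_n}=u_{\gamma_n}+\|u^-_{\gamma_n}\|_\infty$. This function solves the equation with zeroth-order term $\gamma_nv_{\gamma_n}$ and right-hand side $f+\gamma_n\|u^-_{\gamma_n}\|_\infty$, so Proposition \ref{prop1regularity} makes it locally equi-Lipschitz. The paper first shows that the minimum points $x_n$ stay in $\{d\ge\varepsilon_0\}$, and only then derives b) from the local boundedness of $v_{\gamma_n}$.

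You obtain b) with no boundary analysis at all, from the vanishing oscillation of $\gamma u_\gamma$ on connected compacts, and you fix $C$ by a point value at $x_0$. The barrier then enters only in c). This decoupling is clean and correct.

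For the barrier, the paper simply reuses the bounded function $\underline u=-\frac12\log(1+\varepsilon^{-2}d)$ of Proposition \ref{Nbarriers}. Take any $M$ with $f-\gamma_nu_{\gamma_n}\ge -M+1$. Once $\|u^-_{\gamma_n}\|_\infty\ge M$, one gets $v_{\gamma_n}\ge \underline u+M$ in $\Omega_\varepsilon$. This localizes the $x_n$. Since $M$ is arbitrary, it also gives $\liminf_{d(x)\to0}v\ge M$ for every $M$, i.e. blow-up, though without a rate.

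\textbf{Corrections needed in your step 3.} Your sharp-profile barrier also works, and it yields more: the quantitative bound $v\ge c\,d^{-\alpha}-c\delta^{-\alpha}$. As written, however, step 3 needs two corrections.

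First, for a lower bound the profile must be a \emph{subsolution}, not a supersolution. This requires $c$ small, so that $(c\alpha)^p<c\alpha(\alpha+1)$. The computation uses $\lambda_N(D^2\phi)\ge\langle D^2\phi\,Dd,Dd\rangle$ and $D^2d\,Dd=0$.

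Second, the barrier cannot blow up on $\partial\Omega$, because $u_{\gamma_n}-\min u_{\gamma_n}=\|u^-_{\gamma_n}\|_\infty$ is finite there. Take instead $\phi_n=c(d+s_n)^{-\alpha}-c\delta^{-\alpha}$ on $\Omega_\delta$ (or $-c\log(d+s_n)+c\log\delta$ if $p=2$). Choose $s_n$ so that $\phi_n\le\|u^-_{\gamma_n}\|_\infty$ on $\partial\Omega$, e.g. $cs_n^{-\alpha}=\|u^-_{\gamma_n}\|_\infty$; then $s_n\to0$ by part a). For $\delta$ small and $n$ large, $-\lambda_N(D^2\phi_n)+|D\phi_n|^p\le -M$ in $\Omega_\delta$. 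Since $\phi_n$ is $C^2$, touching from below gives $u_{\gamma_n}-\min u_{\gamma_n}\ge\phi_n$. Hence $u_{\gamma_n}-\min u_{\gamma_n}>0$ in $\{d<\delta/2\}$, which localizes the minimizers, and the blow-up of the limit follows.
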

\begin{proof}
a)\; If not, we could extract a sequence $\gamma_n\to0^+$, such that $(u_{\gamma_n})_n$ is uniformly bounded from below in $\overline\Omega$.  Hence, by \eqref{17apreq1},  $(u_{\gamma_n})_n$ is uniformly bounded in $\overline\Omega$ and, as in the proof of Theorem \ref{propergodic1}, it has a subsequence converging to a solution of \eqref{17apreq2}.

\smallskip
\noindent
b)\; Since the constant function $-\frac{\left\|f^-\right\|}{\gamma}$ is a subsolution of \eqref{13apreq1}, by comparison we obtain that $u^-\leq\frac{\left\|f^-\right\|}{\gamma}$ in $\overline\Omega$. Thus, $\gamma\left\|u^-_\gamma\right\|_\Omega$ is bounded and there exists $C$ such that 
\begin{equation}\label{30apreq2p}
\lim_{n\to+\infty}\gamma_n\left\|u^-_{\gamma_n}\right\|_\infty=C
\end{equation} 
along a sequence $\gamma_n\to0^+$. Moreover, extracting a subsequence if necessary, we may further assume that $\left\|u^-_{\gamma_n}\right\|_\infty=-u_{\gamma_n}(x_n)\to+\infty$ and $x_n\to x_0\in\overline\Omega$ (in fact $x_0\in\Omega$ as it will be shown later). 

We claim that the sequence $v_{\gamma_n}=u_{\gamma_n}+\left\|u^-_{\gamma_n}\right\|_\infty$ converges, locally uniformly in $\Omega$, to a nonnegative continuous function $v$. This claim readily implies the result. Indeed, for any fixed compact set $K\subset\Omega$, since $\left\|v_{\gamma_n}\right\|_{K,\infty}$ is bounded,  we have
\begin{equation*}
\begin{split}
\left\|\gamma_nu_{\gamma_n}+C\right\|_{K,\infty}&=\left\|\gamma_nv_{\gamma_n}+(C-\gamma_{n}\left\|u^-_{\gamma_n}\right\|_\infty)\right\|_{K,\infty}\\
&\leq\gamma_n\left\|v_{\gamma_n}\right\|_{K,\infty}+\left|C-\gamma_{n}\left\|u^-_{\gamma_n}\right\|_\infty\right|\to0\;\;\text{as $n\to+\infty$.}
\end{split}
\end{equation*}
To prove the claim, we first note that 
\begin{equation}\label{20apreq3}
-\lambda_N(D^2v_{\gamma_n})+{|Dv_{\gamma_n}|}^p+\gamma_nv_{\gamma_n}=f+\gamma_n\left\|u^-_{\gamma_n}\right\|_\infty\quad\text{in $\Omega$}.
\end{equation}
Taking into account that   $\gamma_nv_{\gamma_n}$ and $\gamma_n\left\|u^-_{\gamma_n}\right\|_\infty$ are bounded sequences, by the interior Lipschitz estimate of  Proposition \ref{prop1regularity}, we infer that the sequence $v_{\gamma_n}$ is locally equi-Lipschitz. 
Since $v_{\gamma_n}(x_n)=0$, then $v_{\gamma_n}$  is also locally bounded and it converges, locally uniformly in $\Omega$, to a nonnegative function $v$,  provided $(x_n)_n\subset\left\{x\in\Omega\,:\;d(x)\geq\varepsilon_0\right\}$, for some $\varepsilon_0>0$  small enough and any sufficiently large $n$. 
For this, 
 we choose a constant $M>0$ such that $f-\gamma_n u_{\gamma_n}\geq-M+1$ 
in $\Omega$ for all $n$ and observe that 
\begin{equation}\label{strict}
-\lambda_N(D^2u_{\gamma_n})+|Du_{\gamma_n}|^p\geq -M+1 \quad\text{ in } \Omega.
\end{equation} 
According to Proposition \ref{Nbarriers}, there exist $\varepsilon>0$
and $\underline{u}\in C^2(\Omega_{\varepsilon})\cap C(\overline{\Omega_{\varepsilon}})$ 
such that 
\begin{equation}\label{non-strict}
-\lambda_N(D^2\underline{u})+|D\underline{u}|^p\leq -M \ \ \text{ in }\Omega_{\varepsilon}, 
\end{equation}
$\underline{u}\leq -M$ \ on  $\partial \Omega_{\varepsilon}\cap\Omega$,  
and $\underline{u}=0$ \ on $\partial \Omega$. 
Noting that the functions $v_{\gamma_n}$ and $\underline u+M$ satisfy
\eqref{strict} and \eqref{non-strict}, respectively, and that 
$v_{\gamma_n}\geq 0 \geq \underline u+M$ on $\partial \Omega_\varepsilon\cap\Omega$
 and  $v_{\gamma_n}=\|u_{\gamma_n}^-\|_\infty\geq M=\underline u+M$ 
 on $\partial\Omega$ for $n$ sufficiently large, we deduce by the comparison principle 
 for strict sub and super solutions (see \cite[5.C]{CIL}) 
 that $v_{\gamma_n}\geq \underline{u}+M$ on $\overline{\Omega_\varepsilon}$ 
 for $n$ sufficiently large. We may choose a positive $\varepsilon_0<\varepsilon$ 
 so that $\underline u>-M$ in $\Omega_{\varepsilon_0}$, and we conclude that 
 for $n$ sufficiently large,  $v_{\gamma_n}>0$ in $\Omega_{\varepsilon_0}$ 
 and $x_n\in\Omega\setminus \Omega_{\varepsilon_0}$.

\smallskip
\noindent
c)\;  By the local uniform convergence of $v_n$ to $v$, letting $n\to+\infty$ in \eqref{20apreq3} and using \eqref{30apreq2p}, we deduce that $v$ is a viscosity solution of the equation in \eqref{20apreq1}. Moreover, still by the local uniform  convergence, it holds that $v(x_0)=\lim_{n\to+\infty}v_n(x_n)$, and that $\min_\Omega v=0$. As far as the boundary blow-up condition, we first recall from the proof of b) that $v_{\gamma_n}\geq \underline u+M$ in $\Omega_{\varepsilon}$ for $n$ sufficiently large. This yields, in the limit $n\to+\infty$, that $v\geq \underline u+M$  in $\Omega_{\varepsilon}$ and 
$\liminf_{\Omega\ni x\to\partial \Omega}v(x)\geq M$.  In the last inequality, which is independent of $\varepsilon$,  $M$ can be chosen as any 
positive constant larger than $\sup_{(x,n)\in\Omega\times \N}\left(\gamma_n u_{\gamma_n}(x)-f(x)\right)$. Hence, $\lim_{\Omega\ni x\to\partial\Omega}v(x)=+\infty$.
\end{proof}

\begin{rem}
We wish to emphasize once again some qualitative differences between the operators $-\lambda_N(\cdot)$ and $-\lambda_i(\cdot)$ for $i < N$. In Theorem \ref{prop2erg}, the existence of a blow-up solution to \eqref{20apreq1} is established by taking the limit as $\gamma \to 0^+$ (along a subsequence) of $u_\gamma+\left\|u_\gamma^-\right\|_\infty$, $u_\gamma$ being the solutions to \eqref{13apreq1}, provided that \eqref{17apreq2} has no solutions. The proof relies on two main tools: the interior Lipschitz estimate given by Proposition \ref{prop1regularity} and suitable barrier functions (see in particular \eqref{non-strict}).

In the case of $-\lambda_i(\cdot)$ with $i < N$, when $\Omega = B_R$ with $R$ sufficiently large and $f$ is a negative constant, Proposition \ref{DPvsDP0} ensures that \eqref{DP} has no solutions. In this sense, the assumptions of Theorem \ref{prop2erg} become empty when $-\lambda_N$ is replaced by $-\lambda_i$ ($i < N$). In a non-radial setting, even though it is not known whether \eqref{DP0} and \eqref{DP} are equivalent from the viewpoint of solvability, it is worth noting that while the local Lipschitz estimates (Proposition \ref{prop1regularity}) are true for all $i=1,\cdots,N$, the construction of barrier functions like those of \eqref{non-strict} fails for $i<N$.
Alex Gu (personal communication) provided a counterexample showing that the equivalence of solvability between the two problems \eqref{DP0} and \eqref{DP} does not hold in general. The question remains open as to when such equivalence actually holds.
\end{rem}

We now consider the problem of finding a function $u$ and a constant $C$, referred to as an ergodic constant and depending on $\Omega$, $f$ and the operator $-\lambda_N(D^2\cdot)+|D\cdot|^p$, satisfying
\begin{equation}\label{29ape26eq1}
\left\{\begin{array}{cl}
-\lambda_N(D^2u)+|Du|^p=f(x)+C & \text{in $\Omega$}\\
u=+\infty & \text{on $\partial\Omega$.}
\end{array}\right.
\end{equation}
Note that Theorem \ref{prop2erg} yields the existence of a pair $(u,C)$, with $C\geq 0$, solving the above problem in the case where \eqref{17apreq2} does not admit any solution. On the other hand, Proposition \ref{proErgRad} provides the explicit expression of a constant $C<0$ in the radial case with $f\equiv 0$, as well as the existence of a corresponding $C^2$ blow-up solution.

\smallskip

Let us consider the quantity
\begin{equation}\label{mu*}
\mu_N^*(\Omega,f)=\inf\left\{\mu\in\mathbb R\,:\;\exists\varphi\in C(\overline\Omega),\;-\lambda_N(D^2\varphi)+|D\varphi|^p\leq f+\mu\;\;\text{in $\Omega$}\right\}.
\end{equation}

\begin{rem}\label{rem-mu*}
{\rm
If $f\in C(\Omega)$ is bounded, then $\mu_N^*(\Omega,f)$ is a real constant. It is clear that for any $\psi\in C^2(\overline\Omega)$ it holds
\begin{equation}\label{23giu26eq1}
\mu_N^*(\Omega,f)\leq\sup_{x\in\Omega}\left(-\lambda_N(D^2\psi)+|D\psi|^p-f(x)\right)<+\infty.
\end{equation}
Now, let $r>0$ and $x_r\in\Omega$ be such that $B_r(x_r)\subseteq\Omega$. We claim that 
\begin{equation}\label{23giu26eq2}
\mu_N^*(\Omega,f)\geq-\sup_{B_r(x_r)}f-\left(\frac{1}{r}\int_0^{+\infty} \frac{dt}{1+t^p}\right)^{\frac{p}{p-1}}.
\end{equation}
If this is not the case, then, by the definition of $\mu_N^*(\Omega,f)$, there exist $\mu<-\left(\frac{1}{r}\int_0^{+\infty} \frac{dt}{1+t^p}\right)^{\frac{p}{p-1}}-\sup_{B_r(x_r)}f$ and a subsolution $\varphi \in C(\overline\Omega)$ of $-\lambda_N(D^2\varphi)+|D\varphi|^p\leq f+\mu$ in $\Omega$. By Proposition \ref{proErgRad}, there is a blow-up solution $u\in C^2(B_r(x_r))$ of 
$$-\lambda_N(D^2u)+|Du|^p=-\left(\frac{1}{r}\int_0^{+\infty} \frac{dt}{1+t^p}\right)^{\frac{p}{p-1}}\quad \text{in \,$B_r(x_r)$}.$$
Since $u$ is a blow-up solution, the function $\varphi-u$ in $B_r(x_r)$ 
attains  a maximum at a point $\bar x\in B_r(x_r)$. The subsolution property of $\varphi$ yields 
the  contradiction: 
\[
-\left(\frac{1}{r}\int_0^{+\infty} \frac{dt}{1+t^p}\right)^{\frac{p}{p-1}}=-\lambda_N(D^2u(\bar x))+|Du(\bar x)|^p\leq f(\bar x)+\mu\leq \sup_{B_r(x_r)}f+\mu.
\]}
\end{rem}

We have the following
\begin{thm}\label{proergconst}
Let $f\in Lip_{loc}(\Omega)\cap L^\infty(\Omega)$. Then:
\begin{enumerate}
	\item[a)] problem \eqref{29ape26eq1} has a solution;
	\item[b)] the ergodic constant $C$ is unique and $C=\mu_N^*(\Omega,f)$;
	\item[c)] if $B_{r}(x_r)\subseteq\Omega\subseteq B_R(x_R)$ for some $x_r\in\Omega,x_R\in\mathbb R^N$ with  $R\geq r>0$, it holds
	\begin{equation}\label{23giu26eq3}
	-\left(\frac{1}{r}\int_0^{+\infty} \frac{dt}{1+t^p}\right)^{\frac{p}{p-1}}-\sup_{B_r(x_r)}f\leq \mu_N^*(\Omega,f)\leq -\inf_\Omega f -\left(\frac{1}{R}\int_0^{+\infty} \frac{dt}{1+t^p}\right)^{\frac{p}{p-1}}.
	\end{equation}
	\end{enumerate}
\end{thm}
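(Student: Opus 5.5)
Our strategy is to obtain a blow-up pair by shifting $f$ until \eqref{17apreq2} becomes unsolvable, and then to identify the constant with $\mu_N^*(\Omega,f)$ through a comparison argument against the blow-up solution.

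\textbf{Existence, part a).} For $\mu\in\mathbb R$ set $f_\mu=f+\mu$ and define $\bar\mu=\sup\{\mu:\eqref{17apreq2}\text{ with }f_\mu\text{ has a solution}\}$. If \eqref{17apreq2} with datum $f$ itself has no solution, Theorem~\ref{prop2erg} already gives a pair $(v,C)$, and we are done.

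Otherwise we shift $f$ upward. By Remark~\ref{rem-mu*}, the lower bound \eqref{23giu26eq2} shows that for $\mu$ very negative (applied to $-\mu$ in place of $f$) no continuous subsolution exists, hence no solution. The right shift is therefore $\tilde f=f-\mu_0$, with $\mu_0$ large, chosen so that \eqref{17apreq2} with datum $\tilde f$ has no solution.

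Theorem~\ref{prop2erg} applied to $\tilde f$ then produces $\tilde C\ge0$ and a blow-up solution of $-\lambda_N(D^2v)+|Dv|^p=f-\mu_0+\tilde C$. This is \eqref{29ape26eq1} with $C=\tilde C-\mu_0$.

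\textbf{Identification, part b).} Let $(v,C)$ be any solution pair. Since $v$ is continuous in $\Omega$ with $v=+\infty$ on $\partial\Omega$, we first compare with any $\varphi\in C(\overline\Omega)$ subsolution with constant $\mu<C$. The difference $\varphi-v$ attains an interior maximum. Using $v$ as a supersolution and $\varphi$ as a strict subsolution (the gap is $C-\mu>0$), the comparison principle for strict sub/supersolutions \cite[5.C]{CIL} gives a contradiction; this is the same doubling argument as in Remark~\ref{rem-mu*}, now applied to a viscosity rather than classical $v$. Hence $\mu_N^*\ge C$.

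Conversely, for $\mu>C$ we need a continuous-up-to-the-boundary subsolution with constant $\mu$. Take $\varphi_k=\min\{v,k\}$. The minimum of two subsolutions is not a subsolution in general, so this choice fails. Instead we use $\varphi_\theta=\theta v$ with $\theta<1$ near $1$, truncated suitably near the boundary, or alternatively use the solution $u$ of Theorem~\ref{propergodic1} type for the datum $f+\mu$.

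Cleaner is the following route. For $\mu>C$, problem \eqref{17apreq2} with datum $f+\mu$ is solvable, and such a solution lies in $C(\overline\Omega)$ and is admissible in \eqref{mu*}. To see the solvability, note that $v-k$ is a subsolution of the discounted problem for the datum $f+\mu$ whenever $\gamma(v-k)\le\mu-C$ on the relevant region. Comparison then keeps $u_\gamma$ bounded below locally, and we would argue via Theorem~\ref{prop2erg} a) contrapositively: if there were no solution, the rescaled limit would give a second ergodic constant $C'\ge\mu$ for datum $f$. Uniqueness of the constant, which follows from the comparison step above applied to two blow-up solutions (shifting one by a constant and using strictness), gives $C'=C<\mu$, a contradiction. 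Thus $\mu_N^*\le C$, and uniqueness of $C$ follows along the way.

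\textbf{Main obstacle.} The delicate point is the comparison of two blow-up solutions, or of a bounded subsolution against a blow-up viscosity supersolution, both of which are merely continuous. We would handle it by perturbation: replace $\varphi$ by $\varphi-\eta$ and exploit the strict gap $C-\mu$. Since $v\to\infty$ at $\partial\Omega$, the maximum of $\varphi-v$ is attained in a compact subset, and standard comparison there applies. For two blow-up solutions we would use $\theta v_1$ with $\theta<1$: by convexity of $|q|^p$ and homogeneity of $\lambda_N$, $\theta v_1$ is a subsolution with datum $\theta(f+C_1)+(\theta^p-\theta)|Dv_1|^p\le\theta(f+C_1)$. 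Moreover $\theta v_1-v_2\to-\infty$ at the boundary provided $v_1/v_2$ is bounded, and we would obtain that boundedness from the barriers of Proposition~\ref{Nbarriers}.

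\textbf{Bounds, part c).} The lower bound in \eqref{23giu26eq3} is exactly \eqref{23giu26eq2}. For the upper bound, take the radial blow-up solution $U$ of Proposition~\ref{proErgRad} on $B_R(x_R)$, restricted to $\overline\Omega$. If $\overline\Omega$ touches $\partial B_R(x_R)$, use $B_{R'}$ with $R'>R$ and let $R'\downarrow R$. This $U$ is $C^2(\overline\Omega)$ and satisfies $-\lambda_N(D^2U)+|DU|^p=-c_{R'}\le f-\inf f-c_{R'}$, where $c_{R'}$ is the constant of Proposition~\ref{proErgRad}. Plugging it into \eqref{23giu26eq1} gives $\mu_N^*\le-\inf_\Omega f-c_{R'}$, and letting $R'\to R$ yields the upper bound.
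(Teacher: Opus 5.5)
Parts a) and c), and the inequality $C\le\mu_N^*(\Omega,f)$ in b), are fine. In c) you take a slightly more direct route than the paper: you plug the radial blow-up solution on $B_{R'}(x_R)$, $R'>R$, into \eqref{23giu26eq1} and let $R'\downarrow R$, whereas the paper first computes $\mu_N^*(B_R(x_R),0)$ via part b). The genuine gap is the uniqueness of the ergodic constant. Your proof of $\mu_N^*(\Omega,f)\le C$ also depends on it. "Shifting one by a constant and using strictness" does not work. Both $v_1$ and $v_2$ blow up on $\partial\Omega$, so $v_1-v_2-k$ has no sign near the boundary. You therefore cannot localize its maximum in some $\Omega'\subset\subset\Omega$, and \cite[5.C]{CIL} cannot be invoked. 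The argument that works against a bounded $\varphi\in C(\overline\Omega)$ does not transfer.

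Your fallback with $\theta v_1$ is the right device, and it is the one the paper uses. However, it needs much more than boundedness of $v_1/v_2$.
\begin{itemize}
\item The strict gap $\theta(f+C_1)\le f+C_2-\eta$ amounts to $(1-\theta)\inf_\Omega f\ge \theta C_1-C_2+\eta$. This holds for $\theta$ close to $1$ but may fail otherwise.
\item Having $\theta v_1+K\le v_2$ near $\partial\Omega$ requires $\limsup_{d(x)\to0}v_1/v_2\le 1/\theta$.
\end{itemize}
So if you only know $v_1\le Mv_2$ near $\partial\Omega$ with some $M>1$, you are forced to take $\theta<1/M$ and may lose strictness. What you need is $v_1/v_2\to1$ at $\partial\Omega$.

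Proposition \ref{Nbarriers} cannot supply this. Its barriers are bounded and vanish on $\partial\Omega$, so they say nothing about how explosive solutions blow up. The missing ingredient is the sharp blow-up rate of Proposition \ref{boundbeh}. Since $f+C_i$ is bounded, $L=0$ there. Then every explosive subsolution (resp. supersolution) has $\limsup$ (resp. $\liminf$) of $v\,d^{\frac{2-p}{p-1}}$, or of $v/(-\log d)$ if $p=2$, bounded above (resp. below) by the same constant $C_0$. This is proved by comparison in thin boundary strips with explicit barriers of the form $(C_0\pm\varepsilon)(d(x)\mp\delta)^{-\frac{2-p}{p-1}}$. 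Hence $v_1/v_2\to1$. Then $\theta v_1+K$, with $\theta$ close to $1$, together with comparison on $\Omega'\subset\subset\Omega$, gives the contradiction you intend.

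Your detour with $v-k$ as a subsolution of the discounted problem is unnecessary, and it is not a subsolution of the Dirichlet problem anyway, since $v-k=+\infty$ on $\partial\Omega$. The contrapositive use of Theorem \ref{prop2erg} suffices once uniqueness is established.
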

\noindent
In order to prove Theorem \ref{proergconst}-b), we need to determine the exact blow-up rate of explosive solutions. This will follow from Proposition \ref{boundbeh}. To this end, let us introduce some notation. For $p\in(1,2]$, we set 
$$
\gamma=\frac{2-p}{p-1}.
$$
Consider the function $h:\mathbb R\mapsto\mathbb R$  defined by
\begin{equation*}
h_\gamma(t)=\left\{\begin{array}{cl}
-\gamma(\gamma+1)t+(\gamma |t|)^p & \text{if $\gamma>0$}\\
-t+t^2& \text{if $\gamma=0$}
\end{array}\right.
\end{equation*}
Note that
$$
\min_{t\in\mathbb R}h_\gamma(t)=h_\gamma\left(t_{\min}\right)<0
$$
where $t_{\min}=\frac1\gamma{\left(\frac{\gamma+1}{p}\right)}^{\frac{1}{p-1}}$ if $\gamma>0$ and $t_{\min}=\frac12$ if $\gamma=0$. Hence, for any $L> h_\gamma\left(t_{\min}\right)$, there exists a unique $C_L>t_{\min}$ such that $h_\gamma(C_L)=L$. 
In particular, $C_L$ is defined for $L\geq 0$.

\begin{pro}\label{boundbeh}
Let $g\in C(\Omega)$ be bounded from below such that  
\begin{equation}\label{27apr26eq1}
\lim_{d(x)\to0}g(x){d(x)}^{\gamma+2}=L\in[0,+\infty).
\end{equation}
If $u\in \USC(\Omega)$ and $v\in \LSC(\Omega)$ are respectively viscosity sub and supersolution to
\begin{equation*}
-\lambda_N(D^2u)+|Du|^p=g(x)\quad\text{in $\Omega$}
\end{equation*}
and they satisfy the blow-up conditions $$\lim_{d(x)\to0}u(x)=\lim_{d(x)\to0}v(x)=+\infty,$$ then 
\begin{equation}\label{27apr26eq2}
\begin{array}{ll}
\displaystyle\limsup_{d(x)\to0}u(x)d(x)^\gamma\leq C_L & \text{if $\gamma>0$}\\
\displaystyle\limsup_{d(x)\to0}\frac{u(x)}{-\log d(x)}\leq C_L & \text{if $\gamma=0$}
\end{array}
\end{equation}
and
\begin{equation}\label{27apr26eq2'}
\begin{array}{ll}
\displaystyle\liminf_{d(x)\to0}v(x)d(x)^\gamma\geq C_L & \text{if $\gamma>0$}\\
\displaystyle\liminf_{d(x)\to0}\frac{v(x)}{-\log d(x)}\geq C_L & \text{if $\gamma=0$}.
\end{array}
\end{equation}
\end{pro}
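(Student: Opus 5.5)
The plan is to compare $u$ and $v$ with explicit barriers built from the distance function in a thin boundary strip. The equation is invariant under adding constants, and a smooth strict super/subsolution cannot touch a viscosity sub/supersolution from the appropriate side. Since $\Omega$ is a bounded $C^2$ domain, there is $\varepsilon_1>0$ with $d\in C^2(\overline{\Omega_{\varepsilon_1}})$, $|Dd|=1$ and $|D^2d|\le K_0$ there.

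\textbf{Key computation.} For $A>0$, $\delta\in\mathbb R$ and $\gamma>0$, set $w(x)=A(d(x)-\delta)^{-\gamma}$ wherever $d-\delta>0$. Then
\[
Dw=-\gamma A(d-\delta)^{-\gamma-1}Dd,\qquad D^2w=\gamma(\gamma+1)A(d-\delta)^{-\gamma-2}Dd\otimes Dd-\gamma A(d-\delta)^{-\gamma-1}D^2d .
\]
The first matrix has top eigenvalue $\gamma(\gamma+1)A(d-\delta)^{-\gamma-2}$. By the Lipschitz property $|\lambda_N(X)-\lambda_N(Y)|\le|X-Y|$, and using $p(\gamma+1)=\frac p{p-1}=\gamma+2$, we get
\[
-\lambda_N(D^2w)+|Dw|^p=(d-\delta)^{-\gamma-2}\big(h_\gamma(A)+O(d-\delta)\big),
\]
where the $O$ depends only on $A,\gamma,K_0$. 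When $\gamma=0$ we use $w=-A\log(d-\delta)$ instead, which gives $(d-\delta)^{-2}(-A+A^2+O(d-\delta))=(d-\delta)^{-2}(h_0(A)+O(d-\delta))$.

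\textbf{Upper bound for $u$.} Fix $A>C_L$ and $\eta>0$ small. Since $h_\gamma$ is increasing on $(t_{\min},\infty)$, we have $h_\gamma(A)>L+2\eta$. By \eqref{27apr26eq1}, choose $\varepsilon<\varepsilon_1$ so that $g\le (L+\eta)d^{-\gamma-2}$ in $\Omega_\varepsilon$. For $0<\delta<\varepsilon/2$, the previous display shows that $w_\delta+M$ satisfies the strict inequality $-\lambda_N(D^2w_\delta)+|Dw_\delta|^p>(L+\eta)(d-\delta)^{-\gamma-2}\ge g$ in $\{\delta<d<\varepsilon\}$, after shrinking $\varepsilon$ so the $O(d-\delta)$ term is below $\eta$. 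Here we used $d-\delta\le d$ and the fact that $L+\eta\ge0$.

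Take $M=\max_{\{d=\varepsilon\}}u^+$, which is finite since $u$ is USC. Then $u\le w_\delta+M$ on $\{d=\varepsilon\}$, and $w_\delta+M=+\infty$ on $\{d=\delta\}$. So if $u-(w_\delta+M)$ had a positive supremum, it would be attained inside the strip. Using $w_\delta+M$ as test function there contradicts the subsolution inequality. Letting $\delta\to0$ gives $u\le Ad^{-\gamma}+M$ in $\Omega_\varepsilon$, so $\limsup u\,d^\gamma\le A$; then let $A\downarrow C_L$. The logarithmic case $\gamma=0$ is identical.

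\textbf{Lower bound for $v$.} Take $A<C_L$ close to $C_L$ with $A>t_{\min}$, so that $h_\gamma(A)<L$. Use $w^\delta=A(d+\delta)^{-\gamma}-M$ in $\{d<\varepsilon-\delta\}$, so that $d+\delta<\varepsilon$. This is now a strict subsolution: we need $(d+\delta)^{-\gamma-2}(h_\gamma(A)+\eta)<g$.
\begin{itemize}
\item If $h_\gamma(A)+\eta\ge0$, this follows from $(d+\delta)^{-\gamma-2}\le d^{-\gamma-2}$ and $g\ge (L-\eta')d^{-\gamma-2}$, where $L-\eta'>h_\gamma(A)+\eta$.
\item If $h_\gamma(A)+\eta<0$, the left side is at most $(h_\gamma(A)+\eta)\varepsilon^{-\gamma-2}$. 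For small $\varepsilon$ this is below $-\|g^-\|_\infty\le g$; this is where boundedness from below of $g$ is used.
\end{itemize}
On $\partial\Omega$ the barrier is finite, equal to $A\delta^{-\gamma}-M$, while $v\to+\infty$. On the inner boundary $\{d=\varepsilon-\delta\}$ we choose $M$ large, using that $v$ is LSC and hence bounded below there. Then an interior minimum of $v-w^\delta$ contradicts the supersolution property. Letting $\delta\to0$ gives $v\ge Ad^{-\gamma}-M$, so $\liminf v\,d^\gamma\ge A$; then let $A\uparrow C_L$.

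\textbf{Main obstacle.} The delicate step is making the error term $O(d-\delta)$ from $D^2d$ uniform in $\delta$, and handling the possibly negative value $h_\gamma(A)$ in the lower bound, especially when $L=0$. Both are resolved by working in a sufficiently thin strip, so that the principal term $(d\mp\delta)^{-\gamma-2}$ dominates.
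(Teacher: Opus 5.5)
Your proposal is correct and follows the same route as the paper: comparison in a thin boundary strip with the barriers $A(d\mp\delta)^{-\gamma}$ (or $\mp A\log(d\mp\delta)$ when $\gamma=0$), then $\delta\to0$ and $A\to C_L$. The differences are only cosmetic. You track the $D^2d$ error term explicitly, which the paper's ``straightforward computation'' suppresses. For the lower bound you split on the sign of $h_\gamma(A)+\eta$, whereas the paper passes to $w=v-\min_\Omega v+\|g^-\|_\infty$; this $w$ satisfies the equation with an added zeroth-order term and the nonnegative right-hand side $g+\|g^-\|_\infty$, so no case distinction is needed.
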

\begin{proof}  
We first consider \eqref{27apr26eq2}. Observing that
$$
\lim_{d(x)\to0}g^+(x){d(x)}^{\gamma+2}=\lim_{d(x)\to0}g(x){d(x)}^{\gamma+2}=L\geq0,
$$
then, given $\varepsilon>0$, there is $\delta_\varepsilon>0$ such that
\begin{equation}\label{27apr26eq3}
h_\gamma(C_L+\varepsilon)>g^+(x){d(x)}^{\gamma+2}\quad\,\forall x\in\Omega\,:\;d(x)<\delta_\varepsilon.
\end{equation}
  In the case $\gamma>0$,  we consider the function
$$
\varphi_\delta(x)=(C_L+\varepsilon)\left(\frac{1}{(d(x)-\delta)^\gamma}-\frac{1}{(\delta_\varepsilon-\delta)^\gamma}\right)+\sup_{\left\{d(x)=\delta_\varepsilon\right\}}u^+,
$$
where $\delta<\delta_\varepsilon$ and $\delta<d(x)<\delta_\varepsilon$. 
For $\delta_\varepsilon$ sufficiently small, by a straightforward computation and using \eqref{27apr26eq3} we obtain, for $\delta<d(x)<\delta_\varepsilon$, that 
\begin{equation}
\begin{split}
-\lambda_N(D^2\varphi_\delta(x))+|D\varphi_\delta(x)|^p&=\frac{h_\gamma(C_L+\varepsilon)}{(d(x)-\delta)^{\gamma+2}}\\
&>\frac{d(x)^{\gamma+2}}{(d(x)-\delta)^{\gamma+2}}g^+(x)\geq g(x).
\end{split}
\end{equation}
Since $\varphi_\delta$ is a classical strict supersolution and $u\leq \varphi_\delta$ for $d(x)=\delta_\varepsilon$ and for $d(x)=\delta$, then $u(x)\leq \varphi_\delta(x)$ for any $x\in\Omega$ such that $\delta< d(x)<\delta_\varepsilon$. Sending $\delta\to0$, we obtain 
$$
u(x)\leq(C_L+\varepsilon)\left(\frac{1}{d(x)^\gamma}-\frac{1}{\delta_\varepsilon^\gamma}\right)+\sup_{\left\{d(x)=\delta_\varepsilon\right\}}u^+
$$
whenever $d(x)<\delta_\varepsilon$. It follows that
$$
\limsup_{d(x)\to0}u(x)d(x)^\gamma\leq C_L+\varepsilon.
$$
Since $\varepsilon$ is arbitrary, then \eqref{27apr26eq2} is proved in the case $\gamma>0$. \\
In the case $\gamma=0$, the argument is similar to the previous one. However, in this case we consider the function
$$
\varphi_\delta(x)=(C_0+\varepsilon)\left(-\log(d(x)-\delta)+\log(\delta_\varepsilon-\delta)\right)+\sup_{\left\{d(x)=\delta_\varepsilon\right\}}u^+.
$$
The details are left to the reader.

\medskip
We now prove \eqref{27apr26eq2'}. Consider the function $w(x)=v-\min_{\Omega }v+\left\|g^-\right\|_\infty$. It satisfies
\begin{equation}\label{26ape29eq1p}
-\lambda_N(D^2w(x))+|Dw(x)|^p+w(x)\geq g(x)+\left\|g^-\right\|_\infty\quad\text{in $\Omega$}.
\end{equation}
Moreover, by \eqref{27apr26eq1}, we also have
$$
\lim_{d(x)\to0}\left(g(x)+\left\|g^-\right\|_\infty\right)d(x)^{\gamma+2}=L\geq0.
$$
Hence, for any $\varepsilon\in(0,C_L)$, there is $\delta_\varepsilon>0$ sufficiently small such  that
\begin{equation}\label{26ape29eq2p}
h_\gamma(C_L-\varepsilon)+C_L(2\delta_\varepsilon)^2<\left(g(x)+\left\|g^-\right\|_\infty\right)d(x)^{\gamma+2}\quad\,\forall x\in\Omega\,:\;d(x)<\delta_\varepsilon.
\end{equation}
For any $0<\delta<\delta_\varepsilon$ and $d(x)<\delta_\varepsilon$ consider
$$
\varphi_\delta(x)=(C_L-\varepsilon)\left(\frac{1}{(d(x)+\delta)^\gamma}-\frac{1}{(\delta_\varepsilon+\delta)^\gamma}\right)
$$
By \eqref{26ape29eq2p}, it turns out that 
\begin{equation*}
\begin{split}
-\lambda_N(D^2\varphi_\delta(x))+|D\varphi_\delta(x)|^p+\varphi_\delta(x)&\leq\frac{h_\gamma(C_L-\varepsilon)+C_L(2\delta_\varepsilon)^2}{(d(x)+\delta)^{\gamma+2}}\\
&<\frac{d(x)^{\gamma+2}}{(d(x)+\delta)^{\gamma+2}}\left(g(x)+\left\|g^-\right\|_\infty\right)\leq g(x)+\left\|g^-\right\|_\infty.
\end{split}
\end{equation*}
Taking into account that $w$ is nonnegative and it blows-up on $\partial\Omega$, while $\varphi_\delta$ is, for any fixed $\delta$, a bounded classical supersolution vanishing for $d(x)=\delta_\varepsilon$, then we infer that $w(x)\geq\varphi_\delta(x)$ for any $x\in\Omega$ such that $d(x)<\delta_\varepsilon$. By letting $\delta\to0$, we obtain 
$$
w(x)\geq(C_L-\varepsilon)\left(\frac{1}{d(x)^\gamma}-\frac{1}{\delta_\varepsilon^\gamma}\right)\quad\forall d(x)<\delta_\varepsilon.
$$
Hence 
$$
\liminf_{d(x)\to0}v(x)d(x)^\gamma=\liminf_{d(x)\to0}w(x)d(x)^\gamma\geq C_L-\varepsilon
$$
and \eqref{27apr26eq2'} follows, in the case $\gamma>0$, since $\varepsilon$ is arbitrarily small. If $\gamma = 0$, we proceed as above by considering the following function:
$$
\varphi_\delta(x)=(C_0-\varepsilon)\left(-\log(d(x)+\delta)+\log(\delta_\varepsilon+\delta)\right).
$$
\end{proof}

\begin{proof}[Proof of Theorem \ref{proergconst}]
 a) 
Since $\mu_N^*(\Omega,f)>-\infty$ by \eqref{23giu26eq2}, 
we can choose $\mu\in\R$ so that $\mu<\mu_N^*(\Omega,f)$. As a consequence, the Dirichlet problem \eqref{17apreq2}, with $f$ replaced by $f+\mu$, cannot have viscosity solutions. Then we can  apply Theorem \ref{prop2erg} to conclude the existence  of a solution to \eqref{29ape26eq1}.
 
\medskip
\noindent
b) Suppose by contradiction that there exist $C_1<C_2$ and $u_i\in C(\Omega)$, $i=1,2$, solutions to
\begin{equation*}
\left\{\begin{array}{cl}
-\lambda_N(D^2u_i)+|Du_i|^p=f(x)+C_i & \text{in $\Omega$}\\
u_i=+\infty & \text{on $\partial\Omega$.}
\end{array}\right.
\end{equation*}
Using Proposition \ref{boundbeh} with $g=f+C_i$ and $L=0$, we infer that 
\begin{equation}\label{11mageq1}
\lim_{d(x)\to0}\frac{u_1(x)}{u_2(x)}=1.
\end{equation}
We now pick $\theta\in(0,1)$ sufficiently close to 1 such that 
 \begin{equation}\label{11mageq2}
(1-\theta)\left\|f^-\right\|_\infty+\theta C_1-C_2\leq-\frac{C_2-C_1}{2}.
\end{equation}
Fix any $x_0\in\Omega$ and consider the function $v(x)=\theta u_1(x)+K$ with $K>u_2(x_0)-\theta u_1(x_0)$. By a straightforward computation and using \eqref{11mageq2}, we have
 \begin{equation}\label{11mageq3}
-\lambda_N(D^2v)+{|Dv|}^p\leq f(x)+C_2-\frac{C_2-C_1}{2}\quad\;\text{in $\Omega$.}
\end{equation}
In view of \eqref{11mageq1}, $v(x)-u_2(x)\to-\infty$ as $d(x)\to0$. Hence, there is $\Omega'\subset\subset\Omega$ such that $v,u_2\in C(\overline{\Omega'})$, $v\leq u_2$ on $\partial\Omega'$ and $x_0\in\Omega'$. By the comparison principle with strict inequality (see \cite[5.C]{CIL}), it follows from \eqref{11mageq3} that $v\leq u_2$  in $\Omega'$, leading to the contradiction $v(x_0)\leq u(x_0)$.

\smallskip
\noindent
In order to prove that $C=\mu_N^*(\Omega,f)$, with $\mu_N^*(\Omega,f)$ defined by \eqref{mu*}, let $(u,C)$ be a solution to \eqref{29ape26eq1}. For any $(\mu,\varphi)\in\mathbb R\times C(\overline\Omega)$ satisfying 
\begin{equation}\label{11mageq4}
-\lambda_N(D^2\varphi)+|D\varphi|^p\leq f(x)+\mu\quad\text{in $\Omega$},
\end{equation}
the difference $\varphi-u$ attains its maximum in $\Omega$. Moreover, for some $\Omega'\subset\subset\Omega$, one has $\operatorname*{arg\,max}_{x\in \Omega} (\varphi-u)\subset\Omega'$. 
We claim that $C\leq\mu$. If not, then $C>\mu$ and  $\varphi$ would be  a 
strict subsolution of $-\lambda_N(D^2\varphi)+|D\varphi|^p\leq f+C$ in $\Omega$. The comparison principle \cite[5.C]{CIL} yields that $\max_{\Omega'}(\varphi-u)\leq \max_{\partial \Omega'}(\varphi-u)$, which is absurd in view of the choice of $\Omega'$.
%
%
%
%
%
It follows $C\leq\mu_N^*(\Omega,f)$. Let us suppose now by contradiction that $C<\mu_N^*(\Omega,f)$. By the definition of $\mu_N^*(\Omega,f)$, for $\mu\in\left(C,\mu_N^*(\Omega,f)\right)$ the problem
$$\left\{
\begin{array}{cl}
-\lambda_N(D^2u)+{|Du|}^p =f(x)+\mu & \text{in $\Omega$}\\
u=0 & \text{on $\partial\Omega$}
\end{array}\right.
$$ 
cannot admit a solution. Theorem \ref{prop2erg} yields the existence of a nonnegative ergodic constant $C'$ associated to the right-hand side $f+\mu$. The uniqueness of the ergodic constant leads to the contradiction $C=\mu+C'\geq\mu$.

\medskip
\noindent
 c) The first inequality in \eqref{23giu26eq3} is just  \eqref{23giu26eq2}. As far as the upper bound of $\mu_N^*(\Omega,f)$ is concerned, by Proposition \ref{proErgRad} and Theorem \ref{proergconst}-b), we see that \ $\mu_N^*(B_R(x_R),0)=-C_R$ where, for notational convenience, we set  $C_R=\left(\fr{1}{R}\int_0^\infty \fr{dt}{1+t^p}\right)^{\fr{p}{p-1}}$. Since $\Omega\subseteq B_R(x_R)$, we have\ $\mu_N^*(\Omega,0)\leq \mu_N^*(B_R(x_0),0)=-C_R$. 
For $\mu>-C_R$, there is a subsolution $\varphi\in C(\overline{B_R(x_R)})$ of 
$\,-\lambda_N(D^2\varphi)+|D\varphi|^p\leq \mu$ in $B_R(x_R)$. This implies that $\,-\lambda_N(D^2\varphi)+|D\varphi|^p\leq f-\inf_{\Omega}f+\mu$ in $\Omega$. Thus, $-\inf_{\Omega}f+\mu\geq \mu_N^*(\Omega,f)$ and by the arbitrariness of $\mu>-C_R$, we conclude that  $\mu_N^*(\Omega,f)\leq -\inf_{\Omega}f-C_R$.

\end{proof}

\begin{rem}
We consider the quantities 
\begin{equation}\label{mui}
\mu_i^*(\Omega,f)=\inf\left\{\mu\in\mathbb R\,:\;\exists\varphi\in C(\overline\Omega),\;-\lambda_i(D^2\varphi)+|D\varphi|^p\leq f+\mu\;\;\text{in $\Omega$}\right\}
\end{equation}
for any $i=1,\ldots,N$. It follows immediately from the definition that
\begin{equation}\label{20mag26eq1}
\mu^*_N(\Omega,f)\leq\mu^*_{N-1}(\Omega,f)\leq...\leq\mu^*_1(\Omega,f).
\end{equation}
As in Remark \ref{rem-mu*}, if $f\in C(\Omega)$ is supposed to be bounded from below, then for any $\psi\in C^2(\overline\Omega)$ it holds
\begin{equation}\label{23giu26eq4'}
\mu_i^*(\Omega,f)\leq\sup_{x\in\Omega}\left(-\lambda_i(D^2\psi)+|D\psi|^p-f(x)\right)<+\infty.
\end{equation}
By appropriately choosing $\psi$ in \eqref{23giu26eq4'}, one can provide more explicit bounds for $\mu_i^*(\Omega,f)$, in terms of $f$, $p$ and $\Omega$. For instance, supposing that $\Omega\subseteq B_R(x_R)$  for some $R>0$ and $x_R\in\mathbb R^N$, and considering $\psi(x)=\frac{k}{2}|x-x_R|^2$ with $k>0$ to be determined, a straightforward computation yields for $x\in B_R(x_R)$
$$
-\lambda_i(D^2\psi(x))+{|D\psi(x)|}^p=-k+{|k(x-x_R)|}^p\leq-k+{(kR)}^p.
$$
Minimizing the right-hand side of the above inequality, i.e. by selecting $k=\frac{1}{(pR^p)^{\frac{1}{p-1}}}$, we have
$$
-\lambda_i(D^2\psi(x))+{|D\psi(x)|}^p\leq-\frac{p-1}{(pR)^\frac{p}{p-1}}\quad\text{in $\Omega$}.
$$
By \eqref{23giu26eq4'} it follows that 
\begin{equation}\label{23giu26eq6}
\mu_i^*(\Omega,f)\leq-\frac{p-1}{(pR)^\frac{p}{p-1}} -\inf_\Omega f.
\end{equation}
Then, in view of  \eqref{23giu26eq2} (assuming either $B_r(x_r)\subset\subset \Omega$ or $B_r(x_r)\subseteq\Omega$  and $f$ bounded),  it turns out that $\mu^*_i(\Omega,f)$ are real constant for every $i=1,\ldots,N$. However, it is worth noting that only $\mu_N^*$ is actually an 
ergodic constant (see Theorem \ref{proergconst}) since, by Theorem \ref{noblowupint}, for any $i=1,\ldots,N-1$ the corresponding blow-up boundary value problem
\begin{equation*}
\left\{\begin{array}{cl}
-\lambda_i(D^2u)+|Du|^p=f(x)+C & \text{in $\Omega$}\\
u=+\infty & \text{on $\partial\Omega$}
\end{array}\right.
\end{equation*}
admits no solutions for any $C\in\mathbb R$ and any continuous function $f$ bounded from above in $\Omega$.

Let us conclude this remark by noting that the upper bound for $\mu_N^*(\Omega,f)$ obtained in \eqref{23giu26eq3} is in fact sharper than that in \eqref{23giu26eq6} for $i=N$, due to the inequality \begin{equation}\label{29giu26eq1}\frac{p-1}{{p}^\frac{p}{p-1}}<\left(\int_0^{+\infty} \frac{dt}{1+t^p}\right)^{\frac{p}{p-1}}\end{equation}
which holds for all $R>0$ and $p\in(1,2]$. Indeed, using Gamma and Beta functions, one has $$\int_0^{+\infty} \frac{dt}{1+t^p}=\frac\pi p\frac{1}{\sin\left(\frac\pi p\right)},$$
from which \eqref{29giu26eq1} easily follows. 
Alternatively, since $p\in(1,2]$, we have $$
\int_0^{+\infty} \frac{dt}{1+t^p}\geq\frac12+\int_1^{+\infty}\frac{dt}{1+t^2}=\frac{2+\pi}{4}>1>\frac{{\left(p-1\right)}^\frac{p-1}{p}}{p}
$$
and \eqref{29giu26eq1} follows as well.
\end{rem}

We present two additional remarks. In Remark \ref{rem-ag}, we give a lower bound for $\mu_i^*(\Omega,f)$, with $i<N$, which is in fact better than the one that can be deduced simply combining \eqref{20mag26eq1} with \eqref{23giu26eq3}
In Remark \ref{rem-strip}, we improve the upper bound for $\mu_N^*(\Omega,f)$ 
in \eqref{23giu26eq3} by showing that it holds provided that $\Omega$ is contained in a strip of width $2R$.

\begin{rem}\label{rem-ag}
Inspired by the counterexample of Alex Gu mentioned in Remark 5.5, we obtained the following lower-bound estimate for $\mu^*_i(\Omega,f)$ with $i<N$. Assume $f\in C(\Omega)$ is bounded, and define 
\[ 
\beta(\Omega,f):= \inf\Big\{\fr{p-1}{(pr)^{\fr{p}{p-1}}}+\max_{\partial B_r(y)}f : r>0,\, \ol{B_r}(y)\subseteq \Omega
\Big\}. 
\]
The estimate is as follows.
\beq\label{ag}
\mu_i^*(\Omega,f)\geq -\beta(\Omega,f). 
\eeq
This is a consequence of Proposition 3.1. Indeed, let $\mu>\mu_i^*(\Omega,f)$, which implies that 
$\mu_i^*(\Omega,f+\mu)<0$. By the definition of $\mu_i^*(\Omega,f+\mu)$, there exists a subsolution 
$u\in C(\ol\Omega)$ of \,$-\lambda_i(D^2u)+|Du|^p=f+\mu$ in $\Omega$. Let $r>0$ and $y$ be such that
$\ol{B_r}(y)\subseteq \Omega$. We can choose $R>r$ so that $B_R(y)\subseteq\Omega$. Setting 
$\tilde f(t)=\max_{\partial B_t(y)}f$ for $t\in(0,R)$, we note that $\tilde f\in C((0,\,R))$, 
$f(x)\leq \tilde f(|x-y|)$ for $x\in B_R(y)\stm\{y\}$, and $u$ is a subsolution of
$-\lambda_i(D^2u)+|Du|^p=\tilde f(|x-y|)+\mu$ in $B_R(y)\stm\{y\}$. Hence, by Proposition \ref{4marprop1}, 
we have 
\[
\fr{p-1}{(pt)^{\fr{p}{p-1}}}+\tilde f(t)+\mu\geq 0 \quad \FOR t\in(0,R),
\]
and, in particular, 
\[
\fr{p-1}{(pr)^{\fr{p}{p-1}}}+\max_{\partial B_r(y)}f+\mu\geq 0.
\]
Taking infimum over all $r>0$ and $y\in\Omega$, with $\ol {B_r}(y)\subseteq\Omega$, we find that
$\beta(\Omega,f)+\mu\geq 0$, proving the claim. Assume as in \eqref{23giu26eq3} that 
$B_r(y)\subseteq \Omega$ for some $r>0$. By \eqref{23giu26eq3} and \eqref{20mag26eq1}, we have
\beq\label{previous-ineq}
\mu_i^*(\Omega,f)\geq -\left(\fr 1r\int_0^{+\infty}\fr{dt}{t^p+1} \right)^{\fr{p}{p-1}}-\sup_{B_r(y)} f.
\eeq
In this case, we have 
\[
\beta(\Omega,f)\leq \fr{p-1}{(pr)^{\fr{p}{p-1}}}+\sup_{B_r(y)}f
<\left(\fr 1r\int_0^{+\infty}\fr{dt}{t^p+1} \right)^{\fr{p}{p-1}}+\sup_{B_r(y)} f,
\]
where the last inequality is due to \eqref{29giu26eq1}, and
the estimate \eqref{ag} is better than \eqref{previous-ineq}. 
\end{rem}

\begin{rem} \label{rem-strip} Notably, restricting focus to one-dimensional (rather than radial) functions $\varphi$ in \eqref{mu*} yields a new formulation of the upper bound of $\mu_N^*(\Omega,f)$ in Theorem 
\ref{proergconst}. Let $R>0$ and $S$ be an open  strip, with width $2R$, i.e., 
\[
S=y+\{x\in\R^N : |e\cdot x|<R\},
\] 
where $y,e\in\R^N$ and $|e|=1$. The new formulation is the following. If $\Omega\subseteq S$ for some 
strip $S$, with width $2R$, then 
\[
\mu_N^*(\Omega,f)\leq -\inf_{\Omega} f -\left(\fr{1}{ R} \int_0^{+\infty}\fr{dt}{t^p+1}\right)^{\fr{p}{p-1}}.
\]

To see this, we proceed as in the proof of Proposition \ref{proErgRad}, sharing technical details, 
and first set $$
F(t):=\int_0^t \fr{ds}{|s|^p+1}\quad \FOR t\in\R.$$ 
Note that $F\in C^1(\R)$ and $F'(t)>0$ for $t\in\R$. 
We set 
$$
r_p:=\int_0^{+\infty} \fr{ds}{s^p+1}\quad\AND\quad C_R:=\left(\fr{r_p}{R}\right)^{\fr{p}{p-1}}
=\left(\fr{1}{R}\int_0^{+\infty}\fr{ds}{s^p+1}\right)^{\fr{p}{p-1}},
$$
and observe that $F^{-1}\in C^1((-r_p,\, r_p))$ and 
$$
(F^{-1})'(r)=|F^{-1}(r)|^p+1\quad \FOR r\in(-r_p,\,r_p).
$$
Define $v\in C^2((-r_p,\,r_p))$ by
\[
v(r)=\int_0^{r} F^{-1}(t) dt, 
\]
and note that 
\[
v''(r)=|v'(r)|^p+1 \quad\FOR r\in(-r_p,\,r_p). 
\]
Note also that $v(-r)=v(r)$ for $r\in(-r_p,\,r_p)$ and $\lim_{r\to r_p^-}v(r)=+\infty$. 

Fix any unit vector $e\in\R^N$ and any $R>0$. Consider the strip 
\[
S_{e,R}:=\{x\in\R^N : |e\cdot x|<R\},
\]
with width $2R$. We set 
$$\varphi(x):=C_R^{\fr{2-p}{p}}v\left(C_R^{\fr{p-1}{p}}e\cdot x\right) 
=C_R^{\fr{2-p}{p}}v\left(\fr{r_p}{R}\,e\cdot x\right) \quad\FOR x\in S_{e,R},
$$
 and observe that $(r_p/R)e\cdot x\in (-r_p,\,r_p)$ for $x\in S_{e,R}$ and hence, 
 $\varphi\in C^2(S_{e,R})$ and that for $x\in S_{e,R}$,
 \begin{gather*}
D\varphi(x)=C_R^{\fr 1p}v'\left(C_R^{\fr{p-1}{p}}e\cdot x\right)e, 
\qquad D^2\varphi(x)=C_Rv''\left(C_R^{\fr{p-1}{p}}e\cdot x\right)e\otimes e,
\\ 
\lambda_N (D^2\varphi(x))=C_Rv''\left(C_R^{\fr{p-1}{p}}e\cdot x\right)
=|D\varphi(x)|^p+C_R.
\end{gather*}
Similarly, for  $0<r<r_p$, if we set 
\[
C_{r,R}:=\left(\fr r R\right)^{\fr {p}{p-1}}\quad\AND\quad
\varphi_r(x):=C_{r,R}^{\fr{2-p}{p}}v\left(\fr{r}{R}\,e\cdot x\right)
=C_{r,R}^{\fr{2-p}{p}}v\left(C_{r,R}^{\fr{p-1}{p}}\,e\cdot x\right) \quad\FOR x\in \overline{S}_{e,R},
\] 
then $\varphi_r\in C^2(\overline S_{e,R})$ and 
\[
\lambda_N(D^2\varphi_r(x))=|D\varphi_r(x)|^p+C_{r,R} \quad \FOR x\in \overline S_{e,R}.
\]
Thus, we find that \, $\mu_N^*(S_{e,R},0)\leq -C_{r,R}$\, for any\, $r\in (0,r_p)$, which implies that\, $\mu_N^*(S_{e,R},0)\leq -C_R=-(r_p/R)^{\fr{p}{p-1}}$. 
Because the operator\, $-\lambda_N(D^2\cdot)+|D\cdot|^p$\, is translation- and rotation-invariant, 
for any strip\, $S$, with width $2R$, we have\, $\mu_N^*(S,0)\leq -(r_p/R)^{\fr{p}{p-1}}$.  
Now, it follows that if\, $\Omega$\, is a subset of a strip \,$S$, with width $2R$, then 
$$\mu_N^*(\Omega,f)\leq \mu_N^*(S,0)-\inf_{\Omega} f\leq -\inf_{\Omega} f-(r_p/R)^{\fr{p}{p-1}},$$ 
which is exactly what is to be shown. 
\end{rem}

\subsection{On the Dirichlet problem}\label{DirlN}
  This subsection is concerned with the existence and uniqueness of viscosity solutions to the equation  $-\lambda_N(D^2u)+|Du|^p+\gamma u=f(x)$ in $\Omega$, subject to either the Dirichlet boundary condition or the explosive condition $u=\infty$ on $\partial\Omega $.
\begin{pro}\label{DirN}
Let $\Omega$ be a bounded $C^2$ domain. Given $f\in C(\Omega)\cap L^\infty(\Omega)$, $\gamma>0$ and $p\in(1,2]$, the Dirichlet problem 
\begin{equation}\label{13apreq1}
\left\{
\begin{array}{cl}
-\lambda_N(D^2u)+\left|Du\right|^p+\gamma u=f(x) & \text{in $\Omega$}\\
u=0 & \text{on $\partial\Omega$}
\end{array}\right.
\end{equation}
admits a unique viscosity solution $u\in C(\overline \Omega)$.
\end{pro}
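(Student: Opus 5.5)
The plan is the standard one: Perron's method together with the comparison principle. The structural part is that $\gamma>0$ gives strict monotonicity in $u$. The real work is constructing barriers at $\partial\Omega$ that vanish on the boundary.

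\emph{Uniqueness and comparison.} The operator $G(x,r,q,X)=-\lambda_N(X)+|q|^p+\gamma r-f(x)$ is proper and degenerate elliptic. It is strictly increasing in $r$, has no $x$-dependence in the $(q,X)$ part, and $f$ is continuous. The comparison principle for $\USC$ subsolutions and $\LSC$ supersolutions on the bounded set $\Omega$, with $u\le v$ on $\partial\Omega$, therefore follows from \cite[Theorem 3.3]{CIL}. Along the way one checks that $|q|^p$ is locally Lipschitz in $q$ and that the doubling term $\alpha|x-y|$ controls $|f(x)-f(y)|$ only through continuity of $f$. Boundedness of $f$ is harmless, since sub- and supersolutions are compared only on compact subsets.

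\emph{Global sub- and supersolutions.} As in the proof of Theorem \ref{propergodic1}, if $\Omega\subseteq B_R$ then $v(x)=\frac{\|f^+\|_\infty}{2}(R^2-|x|^2)\ge0$ is a classical supersolution. The constant $-\|f^-\|_\infty/\gamma$ is a subsolution. Neither of these attains the zero boundary datum, so Perron's method \cite[Theorem 4.1]{CIL} also needs local barriers at each boundary point.

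\emph{Boundary barriers (main step).} This is where Proposition \ref{Nbarriers}, used earlier in the paper, enters. I would construct $\underline u,\overline u\in C(\overline{\Omega_{\varepsilon_0}})\cap C^2(\Omega_{\varepsilon_0})$, with $\underline u=\overline u=0$ on $\partial\Omega$, $\underline u\le -M$ and $\overline u\ge M$ on $\partial\Omega_{\varepsilon_0}\cap\Omega$, and
\[
-\lambda_N(D^2\underline u)+|D\underline u|^p\le -M,\qquad -\lambda_N(D^2\overline u)+|D\overline u|^p\ge M
\]
in $\Omega_{\varepsilon_0}$, where $M$ dominates $\|f\|_\infty$, $\gamma$ times the global bounds, and those bounds themselves.

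Since $\partial\Omega$ is $C^2$, the distance $d$ is $C^2$ near $\partial\Omega$ with $|Dd|=1$. For the supersolution, I would take $\overline u=k\,d$ with $k$ large. Then $|D\overline u|^p=k^p$, while $D^2\overline u=kD^2d$ has $\lambda_N\le kK_0$. This gives
\[
-\lambda_N(D^2\overline u)+|D\overline u|^p\ge k^p-kK_0\ge M
\]
for $k$ large, because $p>1$, and $\overline u=k\varepsilon_0\ge M$ on the inner boundary. For the subsolution, the gradient term works against us, and the difficulty lies here. I would use $\underline u=-\psi(d)$ with $\psi$ concave and increasing, for instance $\psi(t)=A\log(1+Bt)$, in the spirit of the test function in Proposition \ref{prop4regularity}. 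Then $D^2\underline u=-\psi''\,Dd\otimes Dd-\psi' D^2d$, so $\lambda_N\ge -\psi''-\psi'K_0$. Hence
\[
-\lambda_N+|D\underline u|^p\le \psi''+K_0\psi'+(\psi')^p.
\]
With $\psi'=AB/(1+Bt)$ and $\psi''=-AB^2/(1+Bt)^2$, and using $p\le2$, the negative term $AB^2/(1+Bt)^2$ dominates $(\psi')^p+K_0\psi'$ as soon as $A^{p-1}B^{p-2}$ is small and $B$ is large. It dominates them by at least $M$ when $\varepsilon_0$ is small. Meanwhile $\psi(\varepsilon_0)=A\log(1+B\varepsilon_0)\ge M$ can still be arranged by taking $B$ large. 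This is exactly the place where $p\le 2$ and the top eigenvalue $\lambda_N$ are used.

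Finally, I would glue each local barrier to the global ones by taking $\min\{v,\overline u+\text{const}\}$ near the boundary, and similarly $\max$ with the constant for the subsolution. The boundary barriers are strict, so the gluing is valid. This gives a subsolution and a supersolution that are continuous up to $\partial\Omega$ and equal to $0$ there. Perron's method then produces a discontinuous solution squeezed between them, so its envelopes equal $0$ on $\partial\Omega$. Comparison then gives $u^*\le u_*$, so the solution is continuous on $\overline\Omega$ and unique.
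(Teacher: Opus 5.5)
Your proposal is correct and follows essentially the paper's route. The paper also obtains existence and uniqueness by Perron's method (\cite[Theorem 4.1]{CIL}) together with comparison. It uses exactly the boundary barriers of Proposition \ref{Nbarriers}, $\overline u=Cd$ and $\underline u=-\frac12\log(1+\varepsilon_0^{-2}d)$ (your $\psi$ with $A=\frac12$, $B=\varepsilon_0^{-2}$), glued to the constants $\pm M$ with $M=\max\{\|f\|_\infty,\|f\|_\infty/\gamma\}$. The differences are cosmetic: you glue the supersolution to $\frac{\|f^+\|_\infty}{2}(R^2-|x|^2)$ rather than to a constant, and your lower bound on $\lambda_N(D^2\underline u)$ carries a harmless extra $-K_0\psi'$ term, which the paper avoids by testing in the direction $Dd$, where $D^2d\,Dd=0$.
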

Since $\gamma>0$, the existence of a unique solution to \eqref{13apreq1} is classically obtained through the Perron's method. For this it is sufficient to construct continuous sub and supersolutions vanishing on  $\partial\Omega$. We recall that, since $\Omega$ is supposed to be a $C^2$ domain, then there is $\delta>0$ such that the distance function $d(x):={\rm dist}\,(x,\partial\Omega)$ is $C^2$ in $\Omega_\delta=\left\{x\in \Omega\,:\;d(x)<\delta\right\}$. Moreover $|Dd(x)|=1$ in $\Omega_\delta$.

\begin{pro}\label{Nbarriers}
Let $\Omega$ be a bounded $C^2$ domain, $p\in(1,2]$ and $M>0$. Then there exists $\varepsilon_0=\varepsilon_0(p,M)>0$ and there exist $\underline u\in C^2\left({\Omega_{\varepsilon_0}}\right)\cap C(\overline \Omega_{\varepsilon_0})$, $\overline u\in C^2\left({\Omega_{\varepsilon_0}}\right)\cap C(\overline \Omega_{\varepsilon_0})$ such that 
\begin{equation*}
\left\{
\begin{array}{cl}
-\lambda_N(D^2\underline u)+{\left|D\underline u\right|}^p\leq -M & \text{in $\Omega_{\varepsilon_0}$}\\
\underline u<0 & \text{in $\Omega_{\varepsilon_0}$}\\
\underline u\leq -M & \text{on $\partial\Omega_{\varepsilon_0}\cap\Omega$}\\
\underline u=0 & \text{on $\partial\Omega$}
\end{array}\right.
\end{equation*}
and 
\begin{equation*}
\left\{
\begin{array}{cl}
-\lambda_N(D^2\overline u)+{|D\overline u|}^p\geq M & \text{in $\Omega_{\varepsilon_0}$}\\
\overline u>0 & \text{in $\Omega_{\varepsilon_0}$}\\
\overline u\geq M & \text{on $\partial\Omega_{\varepsilon_0}\cap\Omega$}\\
\overline u=0 & \text{on $\partial\Omega$.}
\end{array}\right.
\end{equation*}
\end{pro}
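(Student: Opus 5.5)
Both barriers will be built as functions of the distance $d(x)$ alone, in the $C^2$ collar $\Omega_\delta$ where $|Dd|=1$ and $|D^2d|\le K$. For $\psi(x)=\phi(d(x))$ we have
\[
D\psi=\phi'(d)Dd,\qquad D^2\psi=\phi''(d)\,Dd\otimes Dd+\phi'(d)D^2d .
\]
Since $\lambda_N$ is $1$-Lipschitz in the matrix norm, this gives $|\lambda_N(D^2\psi)-\lambda_N(\phi''Dd\otimes Dd)|\le K|\phi'|$. Moreover $\lambda_N(\phi''\,Dd\otimes Dd)=\max\{\phi'',0\}$. The first-order term is simply $|\phi'(d)|^p$. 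So everything reduces to one-dimensional inequalities for $\phi$ on $(0,\varepsilon_0)$, with an error of size $K|\phi'|$ coming from the curvature.

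\emph{Supersolution $\overline u$.} I will take $\overline u=\phi(d)$ with $\phi$ concave and increasing, so $\phi''<0$ and $\lambda_N(\phi''Dd\otimes Dd)=0$. Then
\[
-\lambda_N(D^2\overline u)+|D\overline u|^p\ \ge\ -K\phi'+(\phi')^p .
\]
It suffices to have $\phi'\ge A$ on $(0,\varepsilon_0)$ with $A^p-KA\ge M$. The boundary conditions require $\phi(0)=0$, $\phi>0$, and $\phi(\varepsilon_0)\ge M$. A concave choice such as $\phi(t)=Bt-t^2$, with $B\ge A+2\varepsilon_0$ and $B\varepsilon_0-\varepsilon_0^2\ge M$, does the job, for instance $B=\max\{A+2\varepsilon_0,\,(M+\varepsilon_0^2)/\varepsilon_0\}$. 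The only constraint on $\varepsilon_0$ is $\varepsilon_0<\delta$; the large slope is produced by $B$, so this barrier is easy.

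\emph{Subsolution $\underline u$.} This is the main obstacle, because now the Hessian term must dominate: we need $\lambda_N(D^2\underline u)\ge |D\underline u|^p+M$. I will take $\underline u=-\phi(d)$ with $\phi(0)=0$, $\phi$ increasing and concave, so that $-\phi''>0$ produces a large positive eigenvalue in the direction $Dd$. Then
\[
\lambda_N(D^2\underline u)\ \ge\ -\phi''-K\phi',
\]
and the requirement becomes
\[
-\phi''\ \ge\ (\phi')^p+K\phi'+M \quad\text{on }(0,\varepsilon_0),
\]
together with $\phi(\varepsilon_0)\ge M$ and $\phi>0$ inside. Here the constraint $p\le2$ enters, and the model is the log profile from Proposition~\ref{prop4regularity}. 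I take
\[
\phi(t)=c\log\!\Big(1+\frac{t}{\eta}\Big),\qquad \phi'=\frac{c}{t+\eta},\qquad -\phi''=\frac{c}{(t+\eta)^2}.
\]
The required inequality then reads
\[
\frac{c}{(t+\eta)^2}\ \ge\ \frac{c^p}{(t+\eta)^p}+\frac{Kc}{t+\eta}+M .
\]
Fix $c=1/2$ and put $s=t+\eta\le\varepsilon_0+\eta$. Multiplying by $s^2$ gives the condition
\[
\tfrac12\ \ge\ 2^{-p}s^{2-p}+\tfrac K2\, s+Ms^2 .
\]
This holds once $s$ is small, provided $p<2$. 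For $p=2$ the term $2^{-p}s^{2-p}$ equals $1/4<1/2$, so it also works; this is exactly why $p\le2$ is needed.

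With $\varepsilon_0$ and $\eta$ small, the boundary requirement $\phi(\varepsilon_0)=\tfrac12\log(1+\varepsilon_0/\eta)\ge M$ is achieved by choosing $\eta$ much smaller than $\varepsilon_0$ (say $\eta=\varepsilon_0e^{-2M}$). This choice keeps $s\le2\varepsilon_0$ small. The function is $C^2$ up to the collar and continuous on the closure, with $\underline u<0$ inside and $\underline u=0$ on $\partial\Omega$.

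Since $-\lambda_N$ is only degenerate elliptic, I will double-check the eigenvalue bounds at the level of $\lambda_N$ applied to the full matrix, using the Lipschitz estimate from the preliminaries. The resulting $\varepsilon_0$ depends also on $\Omega$ through $\delta$ and $K$, which is implicit in the statement.
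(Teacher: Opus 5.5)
Your proposal is correct and is essentially the paper's proof. The paper uses the same log-profile subsolution $\underline u=-\frac12\log(1+\varepsilon_0^{-2}d)$ (your $c=\frac12$ with $\eta=\varepsilon_0^2$ in place of $\eta=\varepsilon_0e^{-2M}$) and the simpler linear supersolution $\overline u=Cd$ with $C^p-kC\ge M$, $C\varepsilon_0\ge M$ (your $-t^2$ correction is harmless but unnecessary); the only technical difference is that, for the subsolution, the paper bounds $\lambda_N(D^2\underline u)\ge\langle D^2\underline u\,Dd,Dd\rangle$ and uses $D^2d\,Dd=0$ (from $|Dd|=1$), so your curvature error term $K\phi'$ never appears there.
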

\begin{proof}
Consider the function $\underline u(x)= -\frac12\log\left(1+\varepsilon_0^{-2}d(x)\right)$, where $\varepsilon_0$ is a positive constant to be selected. It is obvious that $\underline u$ is negative in $x\in \Omega_{\varepsilon_0}$ and vanishes on $\partial \Omega$. Moreover, for $x\in \Omega_{\varepsilon_0}$, a straightforward computation yields
$$
\left|D\underline u(x)\right|=\frac{\varepsilon_0^{-2}}{2(1+\varepsilon_0^{-2}d(x))}
$$
and 
$$
\lambda_N(D^2\underline u(x))\geq\left\langle D^2\underline u(x)Dd(x),Dd(x)\right\rangle=\frac{\varepsilon_0^{-4}}{2\left(1+\varepsilon_0^{-2}d(x)\right)^2}.
$$ 
Thus, for $x\in\Omega_{\varepsilon_0}$, one has
\begin{equation}\label{14apreq1}
-\lambda_N(D^2\underline u(x))+{\left|D\underline u(x)\right|}^p\leq-\frac{\varepsilon_0^{-4}}{2\left(1+\varepsilon_0^{-2}d(x)\right)^2}\left(1-2^{1-p}{\left(\varepsilon_0^2+d(x)\right)}^{2-p}\right)\,.
\end{equation}
Since $p\leq2$, for $x\in\Omega_{\varepsilon_0}$ we have
\begin{equation}\label{14apreq2}
1-2^{1-p}{\left(\varepsilon_0^2+d(x)\right)}^{2-p}\geq1-2^{1-p}{\left(\varepsilon_0^2+\varepsilon_0\right)}^{2-p}\geq\frac12
\end{equation}
for $\varepsilon_0$ small enough.
Then, by \eqref{14apreq1}-\eqref{14apreq2},  we obtain  that
\begin{equation*}
\begin{split}
-\lambda_N(D^2\underline u(x))+{\left|D\underline u(x)\right|}^p&\leq-\frac{\varepsilon_0^{-4}}{4\left(1+\varepsilon_0^{-2}d(x)\right)^2}\\
&\leq-\frac14\frac{\varepsilon_0^{-2}}{\left(1+\varepsilon_0\right)^2}\leq- M\quad\;\forall x\in\Omega_{\varepsilon_0},
\end{split}
\end{equation*}
provided $\varepsilon_0=\varepsilon_0(p,M)$ is sufficiently small. Moreover, reducing $\varepsilon_0$ if necessary, we can ensure that the following inequality holds:
$$
\underline u(x)=-\frac12\log(1+\varepsilon_0^{-1})\leq -M\quad\;\forall x\in\partial \Omega_{\varepsilon_0}\cap\Omega.
$$
The function $\underline  u$ is the desired subsolution.

\smallskip
As far as the supersolution $\overline u$ is concerned, set $\overline u(x)=Cd(x)$, $x\in\Omega_{\varepsilon_0}$, with $C>0$ to be selected. Let $k=k(\Omega)>0$ be such that
$D^2d(x)\leq kI_N$ in $\Omega_{\varepsilon_0}$. We have
\begin{equation}\label{13apreq2}
-\lambda_N(D^2\overline u(x))+{|D\overline u(x)|}^p\geq-kC+C^p\quad\;\forall x\in\Omega_{\varepsilon_0}.
\end{equation}
Moreover 
\begin{equation}\label{13apreq3}
\overline u(x)=C\varepsilon_0\quad\;\forall x\in\partial\Omega_{\varepsilon_0}\cap\Omega.
\end{equation}
Since $p>1$, we can choose $C$ sufficiently large such that $-kC+C^p\geq M$ and $C\varepsilon_0\geq M$. From \eqref{13apreq2}-\eqref{13apreq3} the conclusion follows.
\end{proof}

\begin{proof}[Sketch of the proof of Proposition \ref{DirN}]
By Proposition \ref{Nbarriers} with $M=\max\left\{\left\|f\right\|_\infty,\frac{\left\|f\right\|_\infty}{\gamma}\right\}$, we infer that  the functions
$$\underline U(x)=\left\{
\begin{array}{cl}
\max\left\{-\underline u,-M\right\} &  \text{if $x\in\overline\Omega_{\varepsilon_0}$}\\
-M & \text{if $x\in\Omega\backslash\overline\Omega_{\varepsilon_0}$}
\end{array}\right.\;,\quad
\overline U(x)=\left\{
\begin{array}{cl}
\min\left\{\overline u,M\right\} &  \text{if $x\in\overline\Omega_{\varepsilon_0}$}\\
M & \text{if $x\in\Omega\backslash\overline\Omega_{\varepsilon_0}$}
\end{array}\right.
$$
are respectively continuous viscosity sub and supersolution of \eqref{13apreq1}. Hence the conclusion follows from \cite[Theorem 4.1]{CIL}.
\end{proof}

The following propositions deal with explosive solutions.

\begin{pro}\label{DirNinfty}
Let $\Omega$ be a bounded $C^2$ domain. Given $\gamma>0$, $p\in(1,2]$ and $f\in C(\Omega)$ such that 
\begin{equation}\label{13mageq1}
\|f^-\|_{L^\infty(\Omega)}<+\infty\;\text{ and }\;\left\|f^+d^\frac{p}{p-1}\right\|_{L^\infty(\Omega)}<+\infty,
\end{equation}
then the  problem 
\begin{equation}\label{13mageq2}
\left\{
\begin{array}{cl}
-\lambda_N(D^2u)+\left|Du\right|^p+\gamma u=f(x) & \text{in $\Omega$}\\
u=+\infty & \text{on $\partial\Omega$}
\end{array}\right.
\end{equation}
admits a viscosity solution $U_\gamma\in C(\Omega)$. Moreover, for $\delta$ positive and sufficiently small, the following estimates hold in $\Omega$:
\begin{equation}\label{stime}
\begin{array}{cl}
\displaystyle\frac{c}{{(d(x))}^\frac{2-p}{p-1}}-\frac{c}{{\delta}^\frac{2-p}{p-1}}- \frac{D}{\gamma}\leq U_\gamma(x)\leq\frac{C}{{(d(x))}^\frac{2-p}{p-1}}+\frac{D}{\gamma}& \text{if $p\in(1,2)$}\\
\medskip
\displaystyle -c\log\left(d(x)\right)+c\log(\delta)- \frac{D}{\gamma}\leq U_\gamma(x)\leq -C\log\left(d(x)\right)+\frac{D}{\gamma}& \text{if $p=2$}
\end{array}
\end{equation}
where  $c,C,D$ are positive constants  depending on $p, \left\|f^-\right\|_{L^\infty(\Omega)},\left\|f^+d^\frac{p}{p-1}\right\|_{L^\infty(\Omega)}, \left\|d\right\|_{C^2(\Omega_{2\delta})}$.
\end{pro}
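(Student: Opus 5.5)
We prove Proposition \ref{DirNinfty} by approximation from the Dirichlet problems of Proposition \ref{DirN}, using explicit boundary barriers built from powers (or the logarithm) of $d$, as in Proposition \ref{boundbeh}. Set $\gamma_p=\frac{2-p}{p-1}$; we treat $p<2$ and indicate the changes for $p=2$. For $n\in\N$ let $u_n\in C(\overline\Omega)$ be the unique solution of
\[
-\lambda_N(D^2u)+|Du|^p+\gamma u=f_n \ \IN\Omega,\qquad u=n \ \ON\partial\Omega,
\]
where $f_n=\min\{f,n\}$ is bounded. Existence follows as in Proposition \ref{DirN} by adding $n$ to the barriers, since constants only shift the zero order term. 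The comparison principle gives that $u_n$ is nondecreasing in $n$, and $u_n\geq -\|f^-\|_\infty/\gamma$ because that constant is a subsolution. We aim to show that $U_\gamma=\lim u_n$ satisfies \eqref{stime} and is a solution.

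\emph{Upper barrier.} Let $A=\|f^+d^{p/(p-1)}\|_\infty$ and note that $p/(p-1)=\gamma_p+2$. Take
\[
\psi(x)=\frac{C}{d(x)^{\gamma_p}}+\frac{D}{\gamma}
\]
near $\partial\Omega$. In $\Omega_{2\delta}$ we have $D\psi=-C\gamma_p d^{-\gamma_p-1}Dd$ and
\[
D^2\psi=C\gamma_p(\gamma_p+1)d^{-\gamma_p-2}Dd\otimes Dd-C\gamma_p d^{-\gamma_p-1}D^2d .
\]
Since $|Dd|=1$, this gives
\[
-\lambda_N(D^2\psi)+|D\psi|^p\geq d^{-\gamma_p-2}\Big(h_{\gamma_p}(C)-C\gamma_p\|D^2d\|\,d\Big).
\]
We choose $C$ so large that $h_{\gamma_p}(C)\geq 2A+1$, and then $\delta$ so small that the correction term $C\gamma_p\|D^2d\|\delta$ is negligible. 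With these choices $\psi$ is a strict supersolution in $\Omega_\delta$. In the interior we extend $\psi$, for instance by taking the minimum with a large constant $D/\gamma$ plus a bounded smooth function. Alternatively, we may use $d$ only in $\Omega_\delta$ and enlarge $D$ so that $\psi\geq \|f\|_{L^\infty(\Omega\setminus\Omega_\delta)}/\gamma$ there; in the interior the constant $D/\gamma$ is a supersolution as soon as $D\geq \sup_{\Omega\setminus\Omega_\delta} f$.

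Comparing $u_n$ with $\psi$ on $\Omega\cap\{d>\eta\}$ as $\eta\to0$ is not needed, since $\psi=+\infty$ on $\partial\Omega$ and $u_n$ is bounded there. Hence $u_n\leq\psi$, which gives local uniform bounds and the upper estimate.

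\emph{Lower barrier.} Take
\[
\varphi(x)=c\,(d(x)+\eta)^{-\gamma_p}-c\,\delta^{-\gamma_p}-\frac{D}{\gamma}
\]
in $\Omega_\delta$, extended by the constant $-D/\gamma$ inside via a maximum. The same computation, using $D^2d\geq -kI$ and $\lambda_N(X)\geq\langle X\,Dd,Dd\rangle$, gives
\[
-\lambda_N+|D\varphi|^p\leq (d+\eta)^{-\gamma_p-2}\big(h_{\gamma_p}(c)+ck\gamma_p\delta\big),
\]
which is nonpositive for $c$ small, so that $h_{\gamma_p}(c)<0$. With $\gamma\varphi\leq -D\leq -\|f^-\|_\infty$ this makes $\varphi$ a subsolution. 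Moreover $\varphi$ is bounded, and $\varphi\leq n$ on $\partial\Omega$ for $n$ large. Thus $u_n\geq\varphi$, and letting $\eta\to0$ gives the lower bound in \eqref{stime}. In particular $U_\gamma=+\infty$ on $\partial\Omega$.

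\emph{Passage to the limit.} The functions $u_n$ are locally uniformly bounded, and on compacts $\gamma u_n$ is bounded. By Proposition \ref{prop1regularity} (with $f$ replaced by $f_n$, which is only continuous) the family is locally equi-Lipschitz. That proposition asks for Lipschitz $f$, so the safer route is monotone convergence: the half-relaxed limits of $u_n$ together with the stability lemma \cite[Lemma 6.1]{CIL} give a sub- and a supersolution. Continuity then follows either from Proposition \ref{prop4regularity} applied to the supersolution, since $-\lambda_N(D^2u)+|Du|^p\geq f-\gamma u$ is bounded below locally, or from interior comparison. Since $u_n$ is monotone and bounded above locally, $U_\gamma=\sup_n u_n$ is LSC, and $U_\gamma^*$ is a subsolution. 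We expect this identification of $U_\gamma^*$ with $U_\gamma$ to be the main obstacle. We plan to settle it by a local comparison in a subdomain $\{d>\eta\}$ between $U_\gamma^*$ and $U_\gamma$, or by compactness from the local Lipschitz estimate after mollifying $f$. The boundary barriers above are routine by comparison.
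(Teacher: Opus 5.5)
Your architecture matches the paper's: monotone approximation by $u_n$ with data $f_n=\min\{f,n\}$ and $u_n=n$ on $\partial\Omega$, barriers built from $d^{-\gamma_p}$ (or $-\log d$), then local compactness. Your local computation for $\psi$ in $\Omega_\delta$ is also fine. However, the way you extend the upper barrier to all of $\Omega$ does not work, and that is the one non-routine construction in the proof.

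\textbf{The gap: the global upper barrier.} Since $\psi=Cd^{-\gamma_p}+D/\gamma$ decreases in $d$, its minimum with a constant (or a constant plus a bounded function) equals the bounded piece near $\partial\Omega$. This throws away exactly the blow-up, and $u_n\le\min\{\psi,K\}$ fails on $\partial\Omega$ once $n>K$.

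Your alternative, $\psi$ on $\Omega_\delta$ and $D/\gamma$ on $\{d\ge\delta\}$, jumps down by about $C\delta^{-\gamma_p}$ across $\{d=\delta\}$. It is not a viscosity supersolution there. Take $x_0$ with $d(x_0)=\delta$. For every large $M$, the function $\phi=D/\gamma-\epsilon(d-\delta)+M(d-\delta)^2$ touches it from below near $x_0$. Since $D^2d\,Dd=0$, you get $\lambda_N(D^2\phi(x_0))\ge 2M$, hence $-\lambda_N(D^2\phi(x_0))+|D\phi(x_0)|^p+D\le -2M+\epsilon^p+D<f(x_0)$.

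Without a genuine supersolution on all of $\Omega$ that is infinite on $\partial\Omega$, you have no $n$-independent interior upper bound on $u_n$. So local boundedness, the Lipschitz bounds and the upper half of \eqref{stime} are all unsupported.

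The missing idea is a $C^2$ transition to a constant, on which your ``enlarge $D$'' can then act. The paper does this as follows:
on $[\delta,2\delta]$ it replaces $t\mapsto Ct^{-\gamma_p}$ by a quartic $g$ matching value, first and second derivative at $t=\delta$, with $g'(2\delta)=g''(2\delta)=0$;
it continues $g$ by the constant $g(2\delta)$ and notes $-\lambda_N(D^2g(d))\ge -K$ in the layer;
with $D=K+\|f^+\|_{L^\infty(\Omega\setminus\Omega_\delta)}$, the zero-order term $\gamma\overline w\ge D$ absorbs the defect.
The glued function $\overline w$ is then a classical supersolution in all of $\Omega$. A smooth positive extension of $d$ into $\Omega$ would serve equally well.

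\textbf{Smaller points.} In the lower barrier, the claim $\gamma\varphi\le -D$ is false exactly where $\varphi$ is active, since there $\varphi>-D/\gamma$. Instead, absorb $\gamma c(d+\eta)^{-\gamma_p}$ into the negative term $h_{\gamma_p}(c)(d+\eta)^{-\gamma_p-2}$. The ratio of the two is of order $(d+\eta)^2$, so this works once $\delta$ is small, depending also on $\gamma$. Your max-gluing with the constant $-D/\gamma$ is fine.

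The step you flag as the main obstacle is not one. As in the paper, apply Proposition \ref{prop4regularity} to each $u_n$ rather than to the limit. On $\omega'\subset\subset\Omega$ you have $-\lambda_N(D^2u_n)+|Du_n|^p\ge -\|f^-\|_\infty-\gamma\sup_{\omega'}\overline w$, and $\|u_n\|_{L^\infty(\omega')}$ is bounded by the barriers. This gives local equi-Lipschitz bounds with no regularity of $f$ needed, so Proposition 2.5 is not required. Monotonicity, Arzel\`a--Ascoli and stability then finish the proof. Equivalently, once $U_\gamma$ is known to be continuous, Dini's theorem gives local uniform convergence, so $U_\gamma^*=U_\gamma$ is automatic.
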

\begin{proof}
For $n\in\mathbb N$, let $f_n=\min\left\{f(x),n\right\}\in L^\infty(\Omega)$ and let $u_n$ be the unique viscosity solution of
 \begin{equation*}
\left\{
\begin{array}{cl}
-\lambda_N(D^2u)+\left|Du\right|^p+\gamma u=f_n(x) & \text{in $\Omega$}\\
u=n & \text{on $\partial\Omega$}.
\end{array}\right.
\end{equation*}
Note that $u_n=v_n+n$ where $v_n$ is the unique viscosity solution to \eqref{13apreq1} with right-hand side given by $f_n-\gamma n$.  
Moreover $u_n\leq u_{n+1}$, since $f_n\leq f_{n+1}$. \\
Let $\gamma=\frac{2-p}{p-1}$. We present the proof in the case $\gamma>0$, i.e. $p\in(1,2)$, and leave the case $p=2$ to the reader. Let $\varphi(x)=Cd(x)^{-\gamma}$ where  $C>0$ is such that 
\begin{equation}\label{13mageq3}
(C\gamma)^p-C\gamma(\gamma+1)=\left\|f^+d^\frac{p}{p-1}\right\|_{L^\infty(\Omega)}.
\end{equation}
For $\delta$ positive sufficiently small, a direct computation yields
\begin{equation}\label{13mageq4}
\begin{split}
-\lambda_N(D^2\varphi(x))+{|D\varphi(x)|}^p&=d(x)^{-\gamma-2}\left((C\gamma)^p-C\gamma(\gamma+1)\right)\\
&= d(x)^{-\frac{p}{p-1}}\left\|f^+d^\frac{p}{p-1}\right\|_{L^\infty(\Omega)}\geq f(x)\qquad \forall x\in\Omega_\delta.
\end{split}
\end{equation}
Consider now the function $$g(t)=\frac{C\gamma(\gamma-1)\delta^{-\gamma-4}}{4}(t-2\delta)^4+\frac{C\gamma(\gamma-2)\delta^{-\gamma-3}}{3}(t-2\delta)^3+\frac{C(\gamma^2-5\gamma+12)\delta^{-\gamma}}{12}.$$ Note that by construction
\begin{equation}\label{13mageq5}
g(\delta)=h(\delta)\;,\quad g'(\delta)=h'(\delta)\;,\quad g''(\delta)=h''(\delta)\;,\quad g'(2\delta)= g''(2\delta)=0,
\end{equation}
where $h(t)=Ct^{-\gamma}$. Moreover $g(t)>0$ for any $\delta\leq t\leq 2\delta$. Reducing $\delta$, if necessary, we may also suppose that $d\in C^2(\overline \Omega_{2\delta})$. In this way the function $g(d(x))$ is a positive classical solution of 
 \begin{equation*}
-\lambda_N(D^2g(d(x)))\geq-K \;\quad\text{ for }\delta\leq d(x)\leq 2\delta
\end{equation*}
for some $K>0$ sufficiently large. Taking $\displaystyle D=K+\left\|f^+\right\|_{L^\infty(\Omega\backslash\Omega_\delta)}$, we also have that the function $\psi(x)=g(d(x))+\frac D\gamma$ satisfies
\begin{equation}\label{13mageq6}
-\lambda_N(D^2\psi(x))+|D\psi|^p+\gamma\psi(x)\geq f(x)\;\quad\text{ for }\delta\leq d(x)\leq 2\delta.
\end{equation}
Using \eqref{13mageq4}-\eqref{13mageq5}-\eqref{13mageq6} we infer that the function
$$
\overline w=\left\{\begin{array}{cl}
\varphi(x)+\frac D\gamma & \text{for $d(x)<\delta$}\\
\psi(x) & \text{for $\delta\leq d(x)< 2\delta$}\\
\psi(2\delta)& \text{for $ d(x)\geq 2\delta$}
\end{array}\right.
$$ 
is a classical explosive solution of $-\lambda_N(D^2\overline w)+|D\overline w|^p+\gamma \overline w\geq f(x)$. By comparison we obtain that 
\begin{equation}\label{13mageq7}
u_n(x)\leq \overline w(x)\quad\;\forall x\in\Omega,\;\forall n\in\mathbb N.
\end{equation}
Now set  $c=\gamma^{-1}{\left(\frac{\gamma+1}{4}\right)}^{\frac{1}{p-1}}$ and consider, for $s>0$, the function
$$
\varphi_s(x)=\frac{c}{{(d(x)+s)}^\gamma}.
$$
For $\delta$ small to be fixed and any $0<s\leq\delta$, the function $\varphi_s$ satisfies for $d(x)<\delta$ the inequality
\begin{equation*}
-\lambda_N(D^2\varphi_s(x))+{|D\varphi_s(x)|}^p+\gamma\varphi_s(x)\leq \frac{c}{{(d(x)+s)}^{\gamma+2}}\left(-\gamma(\gamma+1)+\gamma^{p}c^{p-1}+4\gamma\delta^2\right).
\end{equation*}
By taking $\delta$ small in order that $4\gamma\delta^2\leq\frac{\gamma(\gamma+1)}{4}$ and using the definition of $c$, we obtain 
\begin{equation*}
\begin{split}
-\lambda_N(D^2\varphi_s(x))+{|D\varphi_s(x)|}^p+\gamma\varphi_s(x)&\leq -\frac{c\gamma(\gamma+1)}{2{(d(x)+s)}^{\gamma+2}}\leq-\frac{c\gamma(\gamma+1)}{2{(2\delta)}^{\gamma+2}}\\&\leq-\left\|f^-\right\|_{L^\infty(\Omega)}\leq f_n(x)\quad\text{for $d(x)<\delta$}
\end{split}
\end{equation*}
provided $\delta$ is sufficiently small.\\ On the other hand the constant function $\frac{c}{{(\delta+s)}^\gamma}-\frac{c}{{\delta}^\gamma}-\frac{\left\|f^-\right\|_{L^\infty(\Omega)}}{\gamma}$ is also a negative subsolution of  the same equation in $\Omega$. Therefore, the function 
$$
\underline w_s=\left\{\begin{array}{cl}
\varphi_s(x)-\frac{c}{{\delta}^\gamma}-\frac{\left\|f^-\right\|_{L^\infty(\Omega)}}{\gamma} & \text{for $d(x)<\delta$}\\
\frac{c}{{(\delta+s)}^\gamma}-\frac{c}{{\delta}^\gamma}-\frac{\left\|f^-\right\|_{L^\infty(\Omega)}}{\gamma} & \text{for $ d(x)\geq \delta$}
\end{array}\right.
$$ 
is a viscosity subsolution of $-\lambda_N(D^2u)+|Du|^p+\gamma u=f_n$ in $\Omega$. By comparison we infer that for any $n\geq\frac{c}{s^\gamma}-\frac{c}{{\delta}^\gamma}-\frac{\left\|f^-\right\|_{L^\infty(\Omega)}}{\gamma}$ it holds
\begin{equation*}
\underline w_s(x)\leq u_n(x)\leq \overline w(x)\quad\;\forall x\in\Omega.
\end{equation*}
Then  $(u_n)_n$ is locally bounded in $\Omega$ for all sufficiently large $n$. In view of the local Lipschitz estimates of Proposition \ref{prop4regularity}, it converges locally uniformly in $\Omega$ to a viscosity solution $U_\gamma$ of $-\lambda_N(D^2u)+|Du|^p+\gamma u=f(x)$. Moreover $U_\gamma\geq w_s$ in $\Omega$. Sending $s\to 0^+$ we conclude that 
$$
\frac{c}{{(d(x))}^\gamma}-\frac{c}{{\delta}^\gamma}- \frac{D}{\gamma}\leq U_\gamma(x)\leq\frac{C}{{(d(x))}^\gamma}+\frac{D}{\gamma}\quad\forall x\in\Omega,
$$
where $c,C,D$ are positive constants  depending on $p, \left\|f^-\right\|_{L^\infty(\Omega)},\left\|f^+d^\frac{p}{p-1}\right\|_{L^\infty(\Omega)}, \left\|d\right\|_{C^2(\Omega_{2\delta})}$. In particular $U_\gamma$ is a blow-up solution.
\end{proof}

\begin{pro}\label{DirNinftyUniq}
Let $\Omega$ be a bounded $C^2$ domain. Given $\gamma>0$, $p\in(1,2]$ and $f\in C(\Omega)$ such that 
\begin{equation}\label{13mageq1}
\|f^-\|_{L^\infty(\Omega)}<+\infty\;\text{ and }\;\lim_{d(x)\to0}f(x){d(x)}^{\frac{p}{p-1}}=L\in[0,+\infty),
\end{equation}
then there exists a unique  viscosity solution of \eqref{13mageq2}. 
\end{pro}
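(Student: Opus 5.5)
Existence comes from Proposition \ref{DirNinfty}: the hypothesis $\lim_{d(x)\to0}f(x)d(x)^{\frac{p}{p-1}}=L$ together with continuity and the lower bound on $f$ gives $\|f^+d^{\frac p{p-1}}\|_{L^\infty(\Omega)}<+\infty$. So the whole task is uniqueness. Let $u,v$ be two viscosity solutions of \eqref{13mageq2}. The plan has three steps: fix the boundary blow-up rate of both solutions, turn that into an asymptotic ratio, and then compare using a scaling perturbation, as in the uniqueness part of Theorem \ref{proergconst}-b).

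\emph{Step 1 (blow-up rate).} Write the equation as $-\lambda_N(D^2u)+|Du|^p=g_u(x):=f(x)-\gamma u(x)$. The difficulty is that $g_u$ depends on $u$, which blows up, so Proposition \ref{boundbeh} does not apply verbatim.
\begin{itemize}
\item For the upper bound, use $g_u\le f+\gamma\|u^-\|_\infty$. Here $u$ is bounded below because it blows up and is continuous. Hence $u$ is a subsolution with right-hand side $\tilde g=f+\text{const}$, and $\tilde g\, d^{\gamma'+2}\to L$, where $\gamma'=\frac{2-p}{p-1}$ and $\gamma'+2=\frac p{p-1}$. The first half of Proposition \ref{boundbeh} then gives $\limsup u\,d^{\gamma'}\le C_L$, or the logarithmic analogue when $p=2$.
\item For the lower bound, the term $+\gamma u$ has the favourable sign. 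The proof of \eqref{27apr26eq2'} already works with an added zero-order term: it uses the barrier $\varphi_\delta$ for $-\lambda_N+|D\cdot|^p+\cdot$. Repeating that argument, with the term $\gamma\varphi_\delta$ absorbed into the small error $C_L(2\delta_\varepsilon)^2$, gives $\liminf u\,d^{\gamma'}\ge C_L$.
\end{itemize}
Together these yield $u/v\to1$ as $d(x)\to0$. When $L=0$ one has $C_0>t_{\min}>0$, so the rate is still nontrivial.

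\emph{Step 2 (comparison).} By symmetry it suffices to show $u\le v$. For $\theta\in(0,1)$ set $w=\theta u$. By positive homogeneity and $\theta^p\le\theta$ (since $p>1$), whenever $\varphi$ touches $w$ from above, $\varphi/\theta$ touches $u$, and we get
\[
-\lambda_N(D^2w)+|Dw|^p+\gamma w\le\theta\bigl(-\lambda_N(D^2(w/\theta))+|D(w/\theta)|^p+\gamma w/\theta\bigr)\le\theta f\le f+(1-\theta)\|f^-\|_\infty .
\]
Here the first inequality uses $\theta|q|^p\ge\theta^p|q|^p$, applied to $q=D(w/\theta)$.
\begin{itemize}
\item To absorb the error term, subtract a constant: $\tilde w=\theta u-\frac{(1-\theta)\|f^-\|_\infty}{\gamma}$ is a subsolution of the same equation, with $f$ unchanged.
\item Since $u/v\to1$ and $v\to+\infty$, we have $\tilde w-v\le(\theta-1+o(1))v\to-\infty$ at $\partial\Omega$.
\item So on some $\Omega'\subset\subset\Omega$ we have $\tilde w\le v$ on $\partial\Omega'$. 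The comparison principle for the proper equation ($\gamma>0$) gives $\tilde w\le v$ in $\Omega'$, hence in $\Omega$.
\end{itemize}
Letting $\theta\to1^-$ gives $u\le v$.

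The main obstacle is Step 1. One must check carefully that Proposition \ref{boundbeh} survives the zero-order term $\gamma u$. The upper bound only needs $u$ bounded below, which holds. For the lower bound I expect to redo the barrier computation with $\gamma\varphi_\delta\le\gamma C_L d^{-\gamma'}$, which is $o(d^{-\gamma'-2})$ and hence negligible. The case $p=2$ needs the analogous logarithmic barriers. Note also that Step 2 uses only that $\liminf(u/v)$ and $\limsup(u/v)$ both equal $1$; a weaker statement, for instance only $\limsup u/v<1/\theta$, would not suffice uniformly in $\theta$.
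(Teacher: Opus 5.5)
Your proposal is correct and takes essentially the same approach as the paper's proof, which follows Lasry--Lions. The steps match: existence comes from Proposition~\ref{DirNinfty}; the blow-up rates of Proposition~\ref{boundbeh} are extended to the equation with the zero-order term $\gamma u$, giving $u_1/u_2\to1$ at $\partial\Omega$; then $\theta u_1-\frac{1-\theta}{\gamma}\|f^-\|_{L^\infty(\Omega)}$ is compared with $u_2$ on a compactly contained subdomain and $\theta\to1^-$. The only difference is that you carry out details the paper leaves to the reader: bounding $-\gamma u$ by the constant $\gamma\|u^-\|_\infty$ for the upper rate, absorbing $\gamma\varphi_\delta$ as a lower-order term for the lower rate, and checking that the hypothesis gives $\|f^+d^{\frac{p}{p-1}}\|_{L^\infty(\Omega)}<\infty$.
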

\begin{proof}
The existence part follows by Proposition \ref{DirNinfty}. As far as the uniqueness of explosive solution is concerned, we argue as in \cite[Theorem II.1]{LL}. First we note that the conclusions \eqref{27apr26eq2}-\eqref{27apr26eq2'} of Proposition \ref{boundbeh} continue to hold (details are left to the reader) for the equation $-\lambda_N (D^2u)+|Du|^p+\gamma u=f(x)$ in $\Omega$, where $\gamma>0$. 
   Then, using the assumption \eqref{13mageq1}, we infer that if $u_1$, $u_2$ are solutions of \eqref{13mageq2} one has $\frac{u_1}{u_2}\to1$ as $d(x)\to0^+$. Let $\theta\in (0,1)$ and set $c_\theta=-\frac{(1-\theta)}{\gamma}\left\|f^-\right\|_{L^\infty(\Omega)}$. The function $v_\theta(x)=\theta u_1(x)+c_\theta$ satisfies $v_\theta(x)-u_2(x)\to-\infty$ as $d(x)\to0^+$. Hence there is $\delta_\theta$ positive and sufficiently small such that $v_\theta\leq u_2$ in $\left\{x\in\Omega:\,d(x)\leq\delta_\theta\right\}$. Since  
	$$
	-\lambda_N(D^2v_\theta)+|Dv_\theta|^p+\gamma v_\theta\leq f(x)\qquad\text{in $\Omega$,}
	$$
	by comparison principle we have $v_\theta\leq u_2$ in $\Omega$. Sending $\theta\to1^-$, we obtain $u_1\leq u_2$ in $\Omega$. Interchanging the roles of $u_1$ and $u_2$, we conclude that  $u_1=u_2$.
\end{proof}

In the remainder of this subsection, we no longer restrict ourselves to the case $p\leq2$, allowing instead the exponent $p$ to be any real number strictly greater than one. We provide a sufficient condition ensuring the existence and uniqueness of a continuous (up to the boundary) solution to the Dirichlet problem \eqref{13apreq1}. 

It is well known that the Dirichlet problem $-F(x,D^2u)+|Du|^p+\gamma u=f$ in $\Omega$, $u=0$ on $\partial\Omega$, is well-posed in the subquadratic case $p\in[1,2]$, for $\gamma>0$, under standard assumptions on $\Omega$, $f$ and $F$, see e.g \cite{BDL2}. By contrast, when $p>2$, a loss of the boundary condition may occur, even if $F$ is uniformly elliptic, and the Dirichlet problem can be solved only in the relaxed viscosity sense, see e.g. \cite{BDL,LL}. 
In the case where $F$ is a linear elliptic operator, with nondegeneracy of ellipticity in the normal direction on $\partial\Omega$, \cite[Theorem 2.12]{CDLP} provides a sufficient condition, in the superquadratic case $p>2$, for the existence of a continuous viscosity solution satisfying the boundary condition $u=\varphi$ pointwise, under the assumptions that $\varphi\in C^{0,\frac{p-2}{p-1}}(\partial\Omega)$ that $\gamma\inf \varphi\leq\inf f$. In the particular case $\varphi=0$, this condition reduces to $f\geq0$. In the following Proposition~\ref{DPl}, for the operator $F=-\lambda_N$, we allow $f$ to take negative values in $\Omega$, although $\Omega$ is assumed to be uniformly convex. By contrast, in \cite{CDLP} no additional geometric assumptions are imposed on the domain, which is only required to be bounded and of class $C^2$.

\begin{pro}\label{DPl}
Let $\Omega\subset\mathbb R^N$ be a bounded domain, $f\in C(\Omega)\cap L^\infty(\Omega)$ and set 
\begin{equation}\label{barRN}
 \bar R=\frac{\int_0^\infty \frac{dt}{1+t^p}}{{\left\|f^-\right\|_\infty}^{\frac{p-1}{p}}}.
\end{equation}
Suppose that $\Omega$ is a uniformly convex domain such that
\begin{equation}\label{uniformconvexN}
\Omega=\bigcap_{y\in Y}B_R(y)\quad\text{for some $Y\subset\mathbb R^N$ and $R<\bar R$.} 
\end{equation}
Then, the problem \eqref{13apreq1} has a unique viscosity solution $u\in C(\overline\Omega)$ for any $\gamma>0$ and $p>1$.
\end{pro}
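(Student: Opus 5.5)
Following the proof of Proposition \ref{suffcondDP}, the plan is Perron's method \cite[Theorem 4.1]{CIL} together with comparison. Since $\gamma>0$, it suffices to produce a continuous subsolution $\underline u\le 0$ and a continuous supersolution $\overline u\ge 0$ of \eqref{13apreq1}, both vanishing on $\partial\Omega$. Perron's method then yields a solution squeezed between them, so it attains the boundary datum $0$ continuously. Uniqueness follows from the standard comparison principle for proper equations with $\gamma>0$ and $f$ continuous.

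\emph{Subsolution.} I work ball by ball. For each $y\in Y$, the rescaled radial function of Proposition \ref{proErgRad}, built on a ball of radius $R'\in(R,\bar R)$, blows up on $\partial B_{R'}(y)$. Instead I use the construction of Proposition \ref{propLN}. With $C_{R}:=\left(\frac1R\int_0^\infty\frac{dt}{1+t^p}\right)^{\frac{p}{p-1}}$, the condition $R<\bar R$ means exactly $\|f^-\|_\infty< C_R$. Proposition \ref{propLN}, applied with $f\equiv-\|f^-\|_\infty$ on a ball of radius slightly larger than $R$, still inside the admissible range, gives a convex, radially nondecreasing $U_y\in C^2$ on $\overline{B_R(y)}$ with
\[
-\lambda_N(D^2U_y)+|DU_y|^p=-\|f^-\|_\infty\le f .
\]
This bound does not use $p\le 2$. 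I then set $w_y:=U_y-U_y|_{\partial B_R(y)}\le 0$, so that $w_y$ vanishes on $\partial B_R(y)$ and, since $\gamma w_y\le 0$, is a subsolution of \eqref{13apreq1} in $B_R(y)$. Because $\Omega\subset B_R(y)$, it is a subsolution in $\Omega$ and $w_y\le 0$ on $\partial\Omega$. All $w_y$ are translates of a single function $W(|x-y|)$, so they are uniformly bounded and equi-Lipschitz. Hence $\underline u:=\sup_{y\in Y}w_y$ is Lipschitz, $\le0$, and a subsolution by \cite[Lemma 4.2]{CIL}. At $x_0\in\partial\Omega$, the representation \eqref{uniformconvexN} gives points $y_n\in Y$ with $|x_0-y_n|\to R$ (possibly after taking the closure of $Y$, which does not change the intersection). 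Thus $w_{y_n}(x_0)\to 0$, so $\underline u(x_0)=0$, and by the Lipschitz bound $\underline u$ is continuous up to the boundary with $\underline u=0$ on $\partial\Omega$.

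\emph{Supersolution.} Here uniform convexity gives an exterior barrier without any $p\le2$ restriction. For $y\in Y$ I take $v_y(x)=A(R^2-|x-y|^2)\ge 0$ on $\Omega$, with $A\ge\|f^+\|_\infty/2$. Then $-\lambda_N(D^2v_y)=2A\ge f$, and $|Dv_y|^p+\gamma v_y\ge 0$, so $v_y$ is a classical supersolution. The function $\overline u:=\inf_y v_y$ is then a Lipschitz supersolution that is nonnegative and vanishes on $\partial\Omega$, by the same argument as above.

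The step I expect to be the main obstacle is showing that the sup/inf envelopes actually vanish on all of $\partial\Omega$. This requires that every boundary point lies on some sphere $\partial B_R(y)$ with $y\in\overline Y$, and I plan to deduce it from \eqref{uniformconvexN} by compactness. A second delicate point is applying Perron/comparison for $p>2$ on a nonsmooth domain. There I rely on the barriers pinning the boundary values, so that the standard comparison for continuous sub/supersolutions with $\gamma>0$ applies without any loss of the boundary condition.
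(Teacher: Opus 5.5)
Your proposal is correct and follows the paper's proof: Perron's method between $\sup_{y\in Y}$ of the radial subsolutions from Proposition \ref{propLN} and an infimum of ball-wise nonnegative supersolutions. The paper's supersolutions are the affine functions $\|f^+\|_\infty^{1/p}\,\xi\cdot(x-z)$ with $z\in\partial B_R(y)$, whereas you use the paraboloids $A(R^2-|x-y|^2)$; this variant works equally well. The boundary step you flag as the main obstacle is in fact immediate: if $x_0\in\partial\Omega$, then $x_0\notin B_R(y)$ for some $y\in Y$, while $x_0\in\overline\Omega\subseteq\overline{B_R(y)}$, so $|x_0-y|=R$ exactly and no closure of $Y$ is needed.
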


\begin{rem}
{\rm In view of inequality \eqref{29giu26eq1}, the number $\bar R$ defined in \eqref{barRN} is larger than the one defined in \eqref{barRi}. Consequently, the class of uniformly convex domains satisfying \eqref{uniformconvex} is contained in the class of domains fulfilling \eqref{uniformconvexN}.}
\end{rem}
\begin{proof}[Proof of Proposition \ref{DPl}.]
 Let $R<\bar R$.  First, we show that there is a supersolution 
$w\in C(\overline\Omega)$ of
\[
-\gl_N(D^2w)+|Dw|^p=f \ \ \IN \Omega,\qquad w|_{\partial\Omega}=0. 
\] 
Fix any $y\in Y$ and $z\in \partial B_R(y)$. Set $\xi=R^{-1}(y-z)$ and define
\[
\eta_z(x)=\|f^+\|_\infty^{\fr 1p} \xi\cdot (x-z).
\]
Observe that for $x\in B_R(y)$
\[
-\gl_N(D^2\eta_z(x))+|D\eta_z(x)|^p=\|f^+\|_\infty\geq f(x) 
\]
and
\[
\eta_z(x)\geq 0 \ \ \IN B_R(y),\qquad \eta_z(z)=0.
\]
The family of the functions $\eta_z$, with $z\in \partial B_R(y)$ and $y\in Y$, is uniformly 
bounded and equi-continuous on $\overline\Omega$. Setting 
$w(x)=\inf\{\eta_z(x) : z\in\partial B_R(y),\, y\in Y\}$, we find that $w\in C(\overline\Omega)$ and that 
$w$ is a nonnegative supersolution of
\[
-\gl_N(D^2w)+|Dw|^p=f \ \  \IN \Omega, \qquad w|_{\partial\Omega}=0.
\] 
The nonnegativity implies that for any $\gamma>0$, $w$
is a supersolution of 
\[
-\gl_N(D^2w)+|Dw|^p  + \gamma w=f \ \ \IN \Omega.
\]

Next, we show that there is a subsolution of 
\[
-\gl_N(D^2u)+|Du|^p=f \ \ \IN \Omega,\qquad u|_{\partial\Omega}=0.
\]
Consider the case when $f^-\not\equiv =0$.
By Proposition \ref{propLN}, for each $y\in Y$, there is a classical solution $u=u_y\in C^2(\ol B_R(y))$ of
\[
-\gl_N(D^2u)+|Du|^p=-\|f^-\|_\infty \ \ \IN B_R(y)\ \ \AND \ \ u|_{\partial B_R(y)}=0. 
\]
It follows that $u_y\leq 0$ on $\ol B_R(y)$. 
Since $-\|f^-\|_\infty\leq f$, then $u$ is a classical subsolution of 
\[
-\gl_N(D^2u)+|Du|^p=f \ \ \IN B_R(y). 
\]
According to the proof of Proposition\ref{propLN}, we may assume that
\[
u_y(x)=u_0(x-y)=U(|x-y|) \ \ \FOR x\in\ol B_R(y).
\] 
It is clear that the family of $u_y$, with $y\in Y$, is uniformly bounded and equi-continuous on
$\overline\Omega$. We define $v\in C(\overline\Omega)$ by setting
\[
v(x)=\sup_{y\in Y}u_y(x) \ \ \FOR x\in\overline\Omega.
\]
 As is well-known, $v$ is a subsolution of 
 \[
-\gl_N(D^2v)+|Dv|^p=f \ \ \IN \Omega. 
\]
When $f^-\equiv 0$, it is obvious that $v=0$ is a subsolution of 
 \[
-\gl_N(D^2v)+|Dv|^p=f \ \ \IN \Omega. 
\]
Noting that $v\leq 0$ in $\overline\Omega$, we find that for any $\gamma>0$, $v$ is a subsolution of 
\[
-\gl_N(D^2v)+|Dv|^p+\gamma v=f \ \ \IN \Omega.
\]
For each $\gamma>0$, Perron's method and the comparison principle yields 
a solution $u_\gamma\in C(\overline\Omega)$ of  
\[
-\gl_N(D^2u)+|Du|^p+\gamma u=f \ \ \IN \Omega
\]
 satisfying
\begin{equation} \label{eq16lug26}
v\leq u_\gamma\leq w \ \ \ON \overline\Omega
\end{equation} 
\end{proof}

\begin{cor}\label{cor6lug26}
Let $\Omega, \bar R, f, p$ as in Proposition \ref{DPl}. Then there exists a viscosity solution $u\in C(\overline\Omega)$ of  
\begin{equation*}
\left\{
\begin{array}{cl}
-\lambda_N(D^2u)+\left|Du\right|^p=f(x) & \text{in $\Omega$}\\
u=0 & \text{on $\partial\Omega$.}
\end{array}\right.
\end{equation*}
\end{cor}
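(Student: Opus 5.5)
The plan is to obtain the solution as the limit, as $\gamma\to0^+$, of the solutions $u_\gamma\in C(\overline\Omega)$ of \eqref{13apreq1} produced in the proof of Proposition \ref{DPl}, using the two-sided bound \eqref{eq16lug26}. That bound is uniform in $\gamma$ and pins down the boundary behaviour:
\[
v\leq u_\gamma\leq w \ \ \ON \overline\Omega .
\]
Here $v$ is the supremum of the radial subsolutions $u_y$ from Proposition \ref{propLN} and $w$ is the infimum of the affine supersolutions $\eta_z$. Neither $v$ nor $w$ depends on $\gamma$, both lie in $C(\overline\Omega)$, and both vanish on $\partial\Omega$. In particular, $(u_\gamma)_{0<\gamma<1}$ is uniformly bounded. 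Moreover, since $w(x)\to0$ and $v(x)\to0$ as $d(x)\to0$, there is a modulus $\nu$ with $|u_\gamma(x)|\leq\nu(d(x))$ for all $\gamma$.

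The next step is interior equicontinuity. I would use Proposition \ref{prop1regularity}, which holds for every $p>1$ and every operator satisfying \eqref{F0}-\eqref{F2}. It requires $f$ to be Lipschitz on balls, whereas Proposition \ref{DPl} only assumes $f\in C(\Omega)\cap L^\infty(\Omega)$. This is the main obstacle.

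\emph{Primary route (if $p\le2$).} Proposition \ref{prop4regularity} applies directly. Each $u_\gamma$ is a supersolution of $-\lambda_N(D^2u)+|Du|^p\geq -\|f^-\|_\infty-\sup_\gamma\gamma\|u_\gamma\|_\infty$, which gives one-sided local Lipschitz bounds $u(x)-u(y)\geq -L|x-y|$. By symmetry in $x,y$, this already yields the full local Lipschitz bound with a constant independent of $\gamma$.

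\emph{Fallback for $p>2$ (or general $p$).} I would approximate $f$ by Lipschitz $f_k$ with $f_k\to f$ locally uniformly, $\|f_k^\pm\|_\infty\le\|f^\pm\|_\infty+1/k$, and $R<\bar R(f_k)$. I would then solve with $f_k$, apply Proposition \ref{prop1regularity} on interior balls for each fixed $k$, and pass to a diagonal limit. The barriers $v,w$ can be taken uniform in $k$ because they depend only on $\|f^\pm\|_\infty$ and $R<\bar R$. Alternatively, one may simply assume $f$ is locally Lipschitz, as in Theorem \ref{propergodic1}.

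With interior equicontinuity and the boundary modulus $\nu$ in hand, I would repeat verbatim the three-case argument in the proof of Theorem \ref{propergodic1}, splitting according to whether $x,y$ lie in $\Omega_\varepsilon$ or not. This gives equicontinuity of $(u_\gamma)$ on $\overline\Omega$. Arzel\`a--Ascoli then provides $\gamma_n\to0$ with $u_{\gamma_n}\to u$ uniformly on $\overline\Omega$. Since $\gamma_nu_{\gamma_n}\to0$ uniformly, the stability of viscosity solutions \cite[Lemma 6.1]{CIL} shows that $u$ solves $-\lambda_N(D^2u)+|Du|^p=f$ in $\Omega$, and $v\le u\le w$ forces $u=0$ on $\partial\Omega$.

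Alternatively, and more simply, one could apply Theorem \ref{propergodic1} directly, since $v$ is a bounded subsolution of \eqref{17apreq2}. That theorem, however, requires $p\le 2$ and a $C^2$ domain, so the direct limit argument above is what covers the general case $p>1$.
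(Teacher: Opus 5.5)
Your overall scheme is the paper's. The paper also passes to the limit $\gamma\to0^+$ in the solutions $u_\gamma$ of Proposition~\ref{DPl}. It uses the $\gamma$-independent bounds \eqref{eq16lug26} for uniform boundedness and boundary control, an interior estimate for equicontinuity, and then Arzel\`a--Ascoli and stability.

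For the interior estimate the paper simply cites Proposition~\ref{prop1regularity}. That result tacitly needs $f$ locally Lipschitz (its constant depends on $\Lambda$), while Proposition~\ref{DPl} only assumes $f\in C(\Omega)\cap L^\infty(\Omega)$, so you were right to flag this. Your $p\le 2$ route via Proposition~\ref{prop4regularity} is correct and removes that hidden hypothesis. Indeed, $u_\gamma$ is a supersolution of $-\lambda_N(D^2u)+|Du|^p\ge -\|f^-\|_\infty-\sup_{\gamma<1}\gamma\|u_\gamma\|_\infty$, with every constant uniform in $\gamma$.

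Your fallback for $p>2$ does not close. Approximating $f$ by Lipschitz $f_k$ gives, for each fixed $k$, equicontinuity in $\gamma$. But the constant in Proposition~\ref{prop1regularity} grows with the Lipschitz constant $\Lambda_k$ of $f_k$, which blows up unless $f$ is itself locally Lipschitz. So the family obtained as $k\to\infty$, or along a diagonal, has no uniform interior modulus, and Arzel\`a--Ascoli does not apply. Replacing compactness by half-relaxed limits would need a comparison principle for the $\gamma=0$ problem, which the paper does not provide. Your last option, assuming $f$ locally Lipschitz, is exactly the paper's argument, but it adds a hypothesis.

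For $p>2$ the gap closes with the superquadratic barrier for subsolutions. Each $u_\gamma$ satisfies $-\lambda_N(D^2u_\gamma)+|Du_\gamma|^p\le K:=\|f^+\|_\infty+\sup_{\gamma<1}\gamma\|u_\gamma\|_\infty$. Set $\alpha=\frac{p-2}{p-1}$ and $\phi(x)=u_\gamma(x_0)+C|x-x_0|^\alpha$. Since $\phi''<0<\phi'/r$, you get $\lambda_N(D^2\phi)\le C\alpha|x-x_0|^{\alpha-2}$, and therefore
\[
-\lambda_N(D^2\phi)+|D\phi|^p\geq |x-x_0|^{-\frac{p}{p-1}}\big((C\alpha)^p-C\alpha\big)>K \quad\text{in } B_\rho(x_0)\setminus\{x_0\}
\]
for $C$ large. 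Also choose $C\rho^\alpha\ge 2\sup_{\gamma<1}\|u_\gamma\|_\infty$. Then the comparison-with-a-barrier argument of Proposition~\ref{prop4regularity}, applied to maxima of $u_\gamma-\phi$, gives $|u_\gamma(x)-u_\gamma(y)|\le C|x-y|^\alpha$ locally. This holds uniformly in $\gamma$ and with $f$ merely bounded. With this estimate, your three-case argument and the rest of the proof go through for all $p>1$.
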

\begin{proof}
For $\gamma>0$, let $u_\gamma$  be the unique solution of \eqref{DirN} provided by Proposition \ref{DPl}. Proposition \ref{prop1regularity}
 and the  inequality  \eqref{eq16lug26} show that the family of the functions $u_\gamma$, with $\gamma>0$, is 
 uniformly bounded and equi-continuous on $\overline\Omega$. The stability argument yields 
 a solution $u\in C(\overline\Omega)$ of 
 \[
 -\gl_N(D^2u)+|Du|^p=f \ \ \IN \Omega,\qquad u|_{\partial\Omega}=0.  \qedhere
 \]
\end{proof}

\appendix
\section{} 

Here we provide a proof of Proposition \ref{eqv-rad-gen}. 

 \bproof {\bf The ``only if'' part: }
Assume that $u$ is a viscosity subsolution of \eqref{N-d}. Let $\phi\in C^2(0,R)$. 
Assume that $U-\phi$ has a maximum at $r_0\in(0,R)$. Then, $u(x)-\phi(|x|)$ has a maximum 
at $r_0e_1$ and $\phi(|x|)$ is $C^2$ in $B_R\stm\{0\}$. Set $\psi(x)=\phi(|x|)$. Note that
\[
D\psi(x)=\phi'(|x|)\frac{x}{|x|},\qquad D^2\psi(x)=\phi'(|x|)\frac{I_N-\bar x\otimes \bar x}{|x|}
+\phi''(|x|)\bar x\otimes \bar x,
\]
and
\[
|D\psi(r_0e_1)|=|\phi'(r_0)|,\qquad 
D^2\psi(r_0e_1)=\frac{\phi'(r_0)}{r_0}\bmat 0&0&\cdots&\cdots\\ 0&1&0&\cdots   \\
\vdots&0&\ddots & \\ 
\vdots&\vdots &&1\emat+\phi''(r_0)\bmat
1&0&\cdots&\cdots  \\ 0&0&0
&\cdots \\ \vdots& 0&\ddots& \\ \vdots&\vdots&&0 \emat,
\]

By the viscosity property of $u$, we have 
\[
\lambda_i(D^2\psi(r_0e_1))+|\phi'(r_0)|^p\leq f(r_0),
\] 
which reads
\[
-\bar\lambda_i(r_0,\phi''(r_0),\phi'(r_0))+|\phi'(r_0)|^p\leq f(r_0). 
\]
Thus, $U$ is a viscosity subsolution of \eqref{1-d}. 

{\bf  The ``if'' part: }  We now assume that $U$ is a viscosity subsolution of \eqref{1-d}. 
Let $\psi\in C^2(B_R\stm\{0\})$
and assume that $u-\psi$ has a maximum at $x_0\in B_R\stm\{0\}$. 
We may assume that $\max(u-\psi)=0$ and $x_0$ is a strict maximum point of $u-\psi$.

Fix any two intervals $(\ga,\beta),  (a,b)$ such that $r_0\in (\ga,\beta)\subset\subset (a,b)\subset\subset (0, R)$, and consider the sup-convolution 
$U_\ep$ of $U$ on $[\ga,\beta]$, i.e., 
\[
U_\ep(r)=\sup_{t\in[a,b]}\left(U(t)-\frac{1}{2\ep}(r-t)^2\right). 
\]
Fix $\delta>0$ sufficiently small.  For any $\ep>0$ sufficiently small, $U_\ep$ is a viscosity subsolution 
of
\[
-\bar\lambda_{i,\delta}(r, U_\ep'',U_\ep')+|U_\ep'|^p=f_\delta(r) \ \ \IN (\ga,\beta),
\]
where 
\[
\bar\lambda_{i,\delta}(r,x,y)=\max_{t\in[r-\delta,r+\delta]}\bar\lambda_i(t,x,y)\ \ \AND \ \ 
f_\gd(r)=\max_{t\in[r-\gd,r+\gd]} f(t)\ \ \FOR r\in(\ga,\beta),
\]
and $\delta>0$ is assumed to satisfy $\ga-\delta>0$ and $\beta+\delta<R$. 
Here, it is crucial that $U$ is locally bounded in $(0,R)$. 
Note that $U_\ep$ is semiconvex, $U_\ep\geq  U$ in $[\ga,\beta]$, and 
$\lim_{\ep\to 0^+}U_\ep(r)=U(r)$ pointwise for $r\in[\ga,\beta]$. 

Set $u_\ep(x)=U_\ep(|x|)$. $U_\ep$ can be considered as a function in $\R$ and it is then a
semiconvex function in $\R$, which is locally Lipschitz continuous. In general, if $g$ is a nondecreasing and convex function in an interval of $\R$, then $g(|x|)$ is a convex function as far as $|x|$ is in the interval. From this observation, we see that $u_\ep$ is locally semiconvex in $\ol{B_\beta}\stm B_\ga$. (Consider the decomposition of $u_\ep$ as the sum of $U_\ep(|x|)+K(|x|+|x|^2)$ 
and $-K(|x|+|x|^2)$, with $K$ sufficiently large.)

Let $x_\ep$ be a maximum point of the function $u_\ep-\psi$ 
in $\ol{B_\beta}\setminus B_\ga$. By passing to a subsequence, we may assume that 
$x_\ep \to \x$ as $\ep\to 0^+$ for some $\x\in\ol{B_\beta}\stm B_\ga$. 
By a Dini's lemma type argument, we deduce that $\x=x_0$. Indeed, to check this, we suppose that $\x\not=x_0$. Since $x_0$ is a strict maximum point of $u-\psi$ with the maximum value $0$, we have $(u-\psi)(\x)<0$, and we may choose $\gamma>0$ so that $(u-\psi)(\x)<-\gamma<0$. We can choose $\ep_0>0$ so that
$(u_{\ep_0}-\psi)(\x)<-\gamma$. We may choose $\ep_1>0$ smaller than $\ep_0$ so that $(u_{\ep_0}-\psi)(x_\ep)<-\gamma$ for $0<\ep<\ep_1$, which implies that $(u_\ep-\psi)(x_\ep)<-\gamma$ for $0<\ep<\ep_1$. 
Thus, we have  a contradiction: 
\[
0=(u-\psi)(x_0)\leq(u_\ep-\psi)(x_0)\leq (u_\ep-\psi)(x_\ep)<-\gamma \ \ \FOR \ep\in(0,\ep_1).
\] 

In what follows, we may assume that $x_\ep$ is an interior point of $\ol{B_\beta}\stm B_\ga$. 
By Jensen's lemma, there is a sequence of points $(x_k,p_k) \in \ol{B_\beta}\stm B_\ga\tim \R^N$
such that 
\[\left\{\,\bald
&\lim_{k\to\infty}(x_k,p_k)=(x_\ep,0), 
\\
&\text{$u_\ep$ has a second differential at $x_k$},
\\
&\text{$u_\ep(x)-\psi(x)-\lan p_k, x\ran$ takes  a local maximum 
at $x_k$. }
\eald\right.
\]

Fix $k$ sufficiently large. By rotation, we may assume that $x_k=r_k e_1$ for some $r_k\in(\ga,\beta)$. 
By the differentiability, we can choose $(a,b,c)\in \R\tim\R^N\tim\cS(N)$ so that
\[
u_\ep(x_k+h)=a+\lan b, h\ran+\frac{1}{2}\lan ch,h\ran+o(|h|^2). 
\]
As is well-known, the choice of $(a,b,c)$ is unique. 
Let $b=(b_1,\ldots,b_N)$ and $c=(c_{ij})$ elementwise. 
We claim that 
\[\bald
&c_{ij}=0 \ \ \IF i\not= j ,
\\
& b=b_1e_1=(b_1,0,\ldots,0), 
\\
&c_{ii}= \frac{\lan b, e_1\ran}{r_k}\ \ \IF i\geq 2,
\\
&b=D\psi(x_k)+p_k, \qquad
c\leq D^2\psi(x_k), 
\eald
\]
The last claim is an easy consequence from that $x_k$ is a local maximum point of 
$x\mapsto u_\ep(x)-\psi(x)-\lan p_k,x\ran$.
A given $h=(h_1,\ldots,h_N)=h_1e_1+\cdots+h_N e_N\in\R^N$, 
let $h^j=h-2h_je_j=\sum_{i\not=j}h_ie_i-h_je_j$. For $j\geq 2$, since 
$|x_k+h^j|^2=(r_k+h_1)^2+h_2^2+\cdots+h_N^2$, 
we have 
\[
u_\ep(x_k+h)=u_\ep(x_k+h^j)=a+\lan b,h^j\ran +\frac{1}{2} \lan ch^j,h^j\ran+o(|h|^2).
\]
Note moreover that
\[\bald
\lan b,h^j\ran&=\sum_{i\not =j}b_ih_i^j+b_jh_j^j=\sum_{i\not =j}b_ih_i-b_jh_j.
\\
\lan ch^j,h^j\ran&= \sum_{i=1}^N c_{ii}(h_i^j)^2+2\sum_{k<l}c_{kl}h_k^jh_l^j
\\
&=\sum_{i=1}^Nc_{ii}h_i^2 
+2\left(\sum_{k<l, k\not= j, l\not=j}c_{kl}h_k^jh_l^j
+\sum_{k<l=j}c_{kl}h_k^jh_l^j
+\sum_{k=j<l}c_{kl}h_k^jh_l^j
\right)
\\
&=\sum_{i=1}^Nc_{ii}h_i^2 
+2\left(\sum_{k<l, k\not= j, l\not=j}c_{kl}h_kh_l
-\sum_{k<l=j}c_{kl}h_kh_l
-\sum_{k=j<l}c_{kl}h_kh_l
\right).
\eald\]
By the uniqueness of the second order expansion, we infer that for any $j\geq 2$, 
\[\bald
b_j=\lan b,e_j\ran=0,\qquad c_{ij}=0 \ \ \FOR i\not=j. 
\eald
\]
Let $j\geq 2$ and consider $x=x(\gth)=r_k \cos\gth\, e_1+r_k\sin\gth\, e_j$ for $\gth\in\R$. Note that 
$|x(\gth)|=r_k$ and $x(0)=r_ke_1$.  
Fix any $\eta>0$. Set 
\[
\phi(x)=a+\lan b,x-x_k\ran+\frac{1}{2} \lan c(x-x_k),x-x_k\ran+\eta|x-x_k|^2
\]
Then 
\[
x\mapsto u_\ep(x)-\phi(x)
\]
has a local maximum at $x=x_k$, which implies that $\gth\mapsto \phi(x(\gth))$ has a local 
minimum at $\gth=0$. Hence,
\[
\frac{d^2}{d\gth^2}\phi(x(\gth))\Big|_{\gth=0}\geq 0.
\]
Computing that
\[\bald
\frac{d}{d\gth} \phi(x(\gth))&=\phi_{x_1}(-r_k\sin\gth)+\phi_{x_j}(r_k\cos\gth),
\\
\frac{d^2}{d\gth^2}\phi(x(\gth))&=\phi_{x_1x_1}(-r_k\sin\gth)^2+\phi_{x_jx_j}(r_k\cos\gth)^2
+2\phi_{x_1x_j}(-r_k\sin\gth)(r_k\cos\gth) 
\\
&\quad
+\phi_{x_1}(-r_k\cos\gth)+\phi_{x_j}(-r_k\sin\gth),
\eald
\]
we deduce that
\[
0\leq 
\frac{d^2}{d\gth^2}\phi(x(\gth))\Big|_{\gth=0}=\phi_{x_jx_j}(x_k)r_k^2-r_k\phi_{x_1}(x_k)
=(c_{jj}+2\eta)r_k^2-r_k b_1,
\]
and hence, 
\[
\frac{b_1}{r_k}\leq c_{jj}+2\eta \ \ \FOR j\geq 2\, \AND\, \eta>0.
\]
This proves that 
\[
\frac{\lan b,e_1\ran}{r_k}\leq c_{ii} \ \ \FOR i\geq 2. 
\]
A similar argument assures that 
\[
\frac{\lan b,e_1\ran}{r_k}\geq c_{ii} \ \ \FOR i\geq 2. 
\]
Thus, 
\[
\frac{\lan b,e_1\ran}{r_k}= c_{ii} \ \ \FOR i\geq 2.
\]
At this moment, we find that 
\[
b=p_k+D\psi(x_k),\qquad b=\lan b,e_1\ran e_1,\qquad  c=\bmat c_{11}&0&\dots& \dots&0\\ 
0& \frac{\lan b, e_1\ran}{r_k} & 0&\cdots& 0\\
\vdots &0& \frac{\lan b,e_1\ran}{r_k}& \ddots&\vdots\\
\vdots &\vdots&\ddots &\ddots&0\\
 0 &0& \cdots&0& \frac{\lan b,e_1\ran}{r_k}\\ 
\emat\leq D^2\psi(x_k).
\]

Now, we are concerned with the interpretation of the expansion
\[
u_\ep(x_k+h)=a+\lan b, h\ran+\frac{1}{2}\lan ch,h\ran+o(|h|^2)
\]
in terms of $U_\ep$. Since $x_k=r_ke_1$, we see that as $t\to 0$,
\[
U_\ep(r_k+t)=u_\ep(x_k+te_1)=a+\lan b,e_1\ran t +\frac{1}{2} c_{11}t^2+o(t^2),
\]
which reads
\[
(\lan b,e_1\ran, c_{11})\in J^2 U_\ep(r_k).
\]

By the viscosity property of $U_\ep$, we have 
\[
-\bar\lambda_{i,\delta}(r_k,c_{11},\lan b,e_1\ran)+|\lan b,e_1\ran|^p\leq f_\delta(r_k).
\]
Choosing a $t_k\in[r_k-\delta,r_k+\delta]$ such that
\[
\bar\lambda_{i,\delta}(r_k,c_{11},\lan b,e_1\ran)
=\bar\lambda_i(t_k,c_{11},\lan b,e_1\ran),
\]
and noting that
\[
\Big| \frac{\lan b,e_1\ran}{r_k}-\frac{\lan b,e_1\ran}{t_k}\Big|
\leq |b|\frac{|r_k-t_k|}{r_kt_k}\leq \left(|p_k|+|D\psi(x_k)|\right)\frac{\delta}{t_kr_k},
\]
we infer that 
\[
\bar\lambda_{i,\delta}(r_k,c_{11},\lan b,e_1\ran)
=\bar\lambda_i(r_k,c_{11}, \lan b,e_1\ran)+O(\delta).
\]
Noting that
\[
\bar\lambda_i(r_k,c_{11},\lan b,e_1\ran)=\lambda_i(c)\leq \lambda_i(D^2\psi(x_k)),
\]
we conclude that
\[
-\lambda_i(D^2\psi(x_k))+|D\psi(x_k)+p_k|^p\leq f_\delta(|x_k|)+O(\delta)
\]
Sending $k\to\infty$ yields
\[
-\lambda_i(D^2\psi(x_\ep))+|D\psi(x_\ep)|^p\leq f_\delta(|x_\ep|)+O(\delta)
\]
Sending $\delta\to 0^+$ and then $\ep\to 0^+$ proves that
\[
-\lambda_i(D^2\psi(x_0))+|D\psi(x_0)|^p\leq f(|x_0|). \qedhere
\]
\eproof

\begin{rem}
\rm As it is clear from the proof, the equivalence between the PDE and ODE formulations concerning viscosity radial sub- and supersolutions will hold as soon as the operator depends only on the eigenvalues of the Hessian and the norm of the gradient, i.e. of the form
$$F(\lambda_1(D^2u), \cdots,\lambda_N(D^2u),|Du|, u, |x|).$$
\end{rem}

\section*{Acknowledgements} We wish to express our thanks to Alex Gu of the University of Chicago for promptly sharing the counterexample mentioned in Remark 5.5.

\end{document}